\title[\bf Cuspidal cohomology]{Cuspidal cohomology of ${\rm GL}(n)$ over a number field}
\author{ \bf Nasit Darshan \ \ \& \ \ A. Raghuram} 
\date{\today}      
\subjclass[2020]{11F75; 11F70, 22E41, 22E55}
\address{Dept.\,of Mathematics, Indian Institute of Science Education and Research, Dr.\,Homi Bhabha Road, Pune 411008, INDIA.}
\email{nasit.darshan@students.iiserpune.ac.in}
\address{Dept.\,of Mathematics, Fordham University at Lincoln Center, New York, NY 10023, USA.} 
\email{araghuram@fordham.edu}
\numberwithin{equation}{section}   
\newtheorem{lemma}[equation]{Lemma}
\newtheorem{thm}[equation]{Theorem}
\newtheorem{prop}[equation]{Proposition}
\newtheorem{con}[equation]{Conjecture}
\newtheorem{claim}[equation]{Claim}
\newcommand{\bfgreek}[1]{\bm{\@nameuse{up#1}}}
\def\blambda{\bfgreek{lambda}}
\let\oldtocsection=\tocsection
\let\oldtocsubsection=\tocsubsection
\let\oldtocsubsubsection=\tocsubsubsection
\renewcommand{\tocsection}[2]{\hspace{0em}\oldtocsection{#1}{#2}}
\renewcommand{\tocsubsection}[2]{\hspace{1em}\oldtocsubsection{#1}{#2}}
\renewcommand{\tocsubsubsection}[2]{\hspace{2em}\oldtocsubsubsection{#1}{#2}}
\begin{document}

 
\begin{abstract}
The main result of this article proves the nonvanishing of cuspidal cohomology for ${\rm GL}(n)$ over a number field which is Galois over its maximal totally real subfield. 
The proof uses the internal structure of a strongly-pure weight that can possibly support cuspidal cohomology 
and the foundational work of Borel, Labesse, and Schwermer. 
\end{abstract} 

\maketitle

 \def\R{\mathbb{R}}
\def\C{\mathbb{C}}
\def\Z{\mathbb{Z}}
\def\Q{\mathbb{Q}}
\def\A{\mathbb{A}}
\def\F{\mathbb{F}}
 \def\J{\mathbb{J}}
\newcommand\D{{\mathbb{ D}}}
 \def\L{\mathbb{L}}
\def\bG{\mathbb{G}}
\def\N{\mathbb{N}}
\def\BH{\mathbb{H}}
\newcommand\Qi{\Q(\i)}

\newcommand{\vless}{\rotatebox[origin=c]{-90}{$<$}}
\newcommand{\vgreat}{\rotatebox[origin=c]{90}{$<$}}
\newcommand{\vgreater}{\rotatebox[origin=c]{90}{$\leq$}}

\newcommand\Dm{\D_\lambda}
\newcommand\Dmp{\D_{\lambda^\prime}}
 \newcommand\Dim{\D_{\io\lambda}}
 \newcommand\Dimp{\D_{\io\lambda^\prime}}
\newcommand\Dum{\D_{\ul{\lambda}}}
\newcommand\Dium{\D_{\io\ul{\lambda}}} 
\newcommand\DumN{\D_{\ul{\lambda}-N\gamma_P}} 
\newcommand\DiumN{\D_{\io\ul{\lambda}-N\gamma_P}} 
\newcommand\cal{\mathcal}
\newcommand\SMK{{\cal S}^M_{K^M_f}}
\newcommand\tMZl{\tM_{\lambda,\Z}} 
\newcommand\Gm{{\mathbb G}_m}
\newcommand\cA{\mathcal{A}}
\newcommand\cC{\mathcal{C}}
\newcommand\calL{\mathcal{L}}
\newcommand\cO{\mathcal{O}}
\newcommand\cU{\mathcal{U}}
\newcommand\cK{\mathcal{K}}   
\newcommand\cW{\mathcal{W}}
\newcommand\HH{\mathcal{H}}
\newcommand\cF{\mathcal{F}} 
\newcommand\cI{\mathcal{I}} 
\newcommand\G{\mathcal{G}}
\newcommand\cB{\mathcal{B}}
\newcommand\cT{\mathcal{T}}
\newcommand\cS{\mathcal{S}}
\newcommand\cP{\mathcal{P}}
\newcommand\K{\mathcal{K}}

\newcommand\GL{{ \rm  GL}}
\newcommand\Gl{{ \rm  GL}}
\newcommand\U{{ \rm  U}}
\def\SU{{\rm SU}}
\def\S{{\bf S}}
\newcommand\Gsp{{\rm Gsp}}
\newcommand\Lie{{\rm Lie}} 
\newcommand\Sl{{\rm  SL}}
\newcommand\SL{{\rm  SL}}
\newcommand\rO{{\rm  O}}
\newcommand\rU{{\rm  U}}
\newcommand\SO{{\rm  SO}}
\newcommand{\Sp}{\text{Sp}}
\newcommand\Ad{\text{Ad}}
\newcommand\AI{\text{AI}}
\newcommand\Sym{\text{Sym}}
\newcommand\Lef{{\rm Lef}}

\def\ringO{\mathcal{O}}
\def\idealP{\mathfrak{P}} 
 \def\g{\mathfrak{g}}
\def\k{\mathfrak{k}}
\def\z{\mathfrak{z}}
\def\s{\mathfrak{s}}
\def\c{\mathfrak{c}}
\def\b{\mathfrak{b}}
\def\t{\mathfrak{t}}
\def\q{\mathfrak{q}}
\def\l{\mathfrak{l}}
\def\gl{\mathfrak{gl}}
\def\sl{\mathfrak{sl}}
\def\u{\mathfrak{u}} 
\def\su{\mathfrak{su}}
\def\fp{\mathfrak{p}} 
\def\p{\mathfrak{p}}   
\def\r{\mathfrak{r}}
\def\a{\mathfrak{a}}
\def\n{\mathfrak{n}}
\def\fd{\mathfrak{d}}
\def\fR{\mathfrak{R}}
\def\fI{\mathfrak{I}}
\def\fJ{\mathfrak{J}}
\def\i{\mathfrak{i}}
\def\perm{\mathfrak{S}}
\newcommand\fg{\mathfrak g}
\newcommand\fk{\mathfrak k}
\newcommand\fgK{(\mathfrak{g},K_\infty^\circ)}
\newcommand\gK{ \mathfrak{g},K_\infty^\circ}

\newcommand\ul{\underline} 
\newcommand\tp{{  {\pi}_f}}
\newcommand\tv{ {\pi}_v}
\newcommand\ts{{ {\sigma}_f}}
\newcommand\pts{ {\sigma}^\prime_f}
\newcommand\usf{\ul{\sigma}_f}
\newcommand\pusf{\ul{\sigma}^\prime_f}
\newcommand\usvp{\ul{\sigma}^\prime_v}
\newcommand\usv{\ul{\sigma}_v}
\newcommand\io{{}^\iota}
\newcommand\uls{{\underline{\sigma}}}

\newcommand\Spec{\hbox{\rm Spec}} 
\newcommand\SGK{\mathcal{S}^G_{K_f}}
\newcommand\SMP{\mathcal{S}^{M_P}}
 \newcommand\SGn{\mathcal{S}^{G_n}}
 \newcommand\SGp{\mathcal{S}^{G_{n^\prime}}}
\newcommand\SMPK{\mathcal{S}^{M_P}_{K_f^{M_P}}}
\newcommand\SMQ{\mathcal{S}^{M_Q}}
\newcommand\SMp{\mathcal{S}^{M }_{K_f^M}}
\newcommand\SMq{\mathcal{S}^{M^\prime}_{K_f^{M^\prime}}}
\newcommand\uSMP{\ul{\mathcal{S}}^{M_P}}
\newcommand\SGnK{\mathcal{S}^{G_n}_{K_f}}
\newcommand\SG{\mathcal{S}^G}
\newcommand\SGKp{\mathcal{S}^G_{K^\prime_f}}
\newcommand\piKK{{ \pi_{K_f^\prime,K_f}}}
\newcommand\piKKpkt{\pi^{\pkt}_{K_f^\prime,K_f}}
\newcommand\BSC{ \bar{\mathcal{S}}^G_{K_f}}
\newcommand\PBSC{\partial\SGK}
\newcommand\pBSC{\partial\SG}
\newcommand\PPBSC{\partial_P\SGK}
\newcommand\PQBSC{\partial_Q\SGK}
\newcommand\ppBSC{\partial_P\mathcal{S}^G}
\newcommand\pqBSC{\partial_Q\mathcal{S}^G}
\newcommand\prBSC{\partial_R\mathcal{S}^G}
\newcommand \bs{\backslash} 
 \newcommand \tr{\hbox{\rm tr}}
 \newcommand\ord{\text{ord}}
\newcommand \Tr{\hbox{\rm Tr}}
\newcommand\HK{\mathcal{H}^G_{K_f}}
\newcommand\HKS{\mathcal{H}^G_{K_f,\place}}
\newcommand\HKv{\mathcal{H}^G_{K_v}}
\newcommand\HGS{\mathcal{H}^{G,\place}}
\newcommand\HKp{\mathcal{H}^G_{K_p}}
\newcommand\HKpo{\mathcal{H}^G_{K_p^0}}
\newcommand\ch{{\bf ch}}

\newcommand\M{\mathcal{M}}
\newcommand\Ml{\M_\lambda}
\newcommand\tMl{\tilde{\Ml}}
\newcommand\tM{\widetilde{\mathcal{M}}}
\newcommand\tMZ{\tM_\Z}
\newcommand\tsigma{\ul{\sigma}}
\newcommand \pkt{\bullet}
\newcommand\tH{\widetilde{\mathcal{H}}}
\newcommand\Mot{{\bf M}} 
\newcommand\eff{{\rm eff}}
\newcommand\Aql{A_{\q}(\lambda)}
\newcommand\wl{w\cdot\lambda}
\newcommand\wlp{w^\prime\cdot\lambda} 

\def\w{{\bf w}} 
\def\d{{\sf d}}
\def\e{{\bf e}} 
\def\x{{\tt x}}
\def\y{{\tt y}}
\def\v{{\sf v}}
\def\q{{\sf q}} 
\def\ff{{\bf f}}
\def\bk{{\bf k}}
 
\def\Ext{{\rm Ext}}
\def\Aut{{\rm Aut}}
\def\Hom{{\rm Hom}}
\def\Ind{{\rm Ind}}
\def\aInd{{}^{\rm a}{\rm Ind}}
\def\aIndPG{\aInd_{\pi_0(P(\R)) \times P(\A_f)}^{\pi_0(G(\R)) \times G(\A_f)}}
\def\aIndQG{\aInd_{\pi_0(Q(\R)) \times Q(\A_f)}^{\pi_0(G(\R)) \times G(\A_f)}}
\def\Gal{{\rm Gal}}
\def\End{{\rm End}} 
\def\cm{{\rm cm}} 
\newcommand\Coh{{\rm Coh}}  
\newcommand\Eis{{\rm Eis}}
\newcommand\Res{\mathrm{Res}}
\newcommand\place{\mathsf{S}}
\newcommand\emb{\mathcal{I}} 
\newcommand\LB{\mathcal{L}}  
\def\Hod{{\mathcal{H}od}}
\def\Crit{{\rm Crit}}
  
 \newcommand\ip{\pi_f\circ \iota} 
\newcommand\Wp{W_{\pi_\infty\times \ip}}
\newcommand\Wpc{W^{\text{cusp}}_{\pi_\infty\times \pi_f\circ\iota}}
\newcommand\Lcusp{  L^2_{\text{cusp}}(G(\Q)\bs G(\A_f)/K_f)}
\newcommand\MiC{\tM_{\iota\circ\lambda,\C} }
\newcommand\miC{\M_{\iota\circ\lambda,\C} }
\newcommand\tpl{{}^\iota}
\newcommand\Id{\rm Id}
\newcommand\Lr{L^{\text{rat}}}
\newcommand \iso{ \buildrel \sim \over\longrightarrow} 
\newcommand\us{\ul\sigma}
\newcommand\qvs{q_v^{-z}}
\newcommand \into{\hookrightarrow}
\newcommand\ppfeil[1]{\buildrel #1\over \longrightarrow}
\newcommand\eb{{}^\iota}
    
\def\bfpi{\mathbf{\Pi}}
\def\bfdelta{\mathbf{\Delta}}

\def\sI{\mathscr{I}}
\def\sU{\mathscr{U}}
\def\sJ{\mathscr{J}}


\section{Introduction}
In the arithmetic theory of automorphic forms, it has been a long standing problem to show the existence of cuspidal cohomology classes on a locally symmetric space with values in a given local system. Even though there has been a lot of progress over the last half a century, the problem is very far from being resolved. The main goal of this paper is to prove the nonvanishing of cuspidal cohomology for $\GL(n)$ over a number field which is a Galois extension over its maximal totally real subfield. This result seems to be new even for $n=2$.

\medskip

To state the main result more precisely, suppose $F$ is a number field and $F_0$ its maximal totally real subfield of $F$. 
Assume that $F/F_0$ is a Galois extension. For example, this is automatically satisfied if $F/\Q$ is itself a Galois extension. 
Let $G_0 = \GL_n/F$, and $G = \Res_{F/\Q}(G_0)$ the group obtained from $G_0$ by the Weil restriction of scalars from $F$ to $\Q$. As will become apparent in the proofs, the technical 
artifice of considering $\GL_n/F$ as a $\Q$-group is useful. 
For an open compact subgroup $K_f$ of $G(\A_{f})$, the locally symmetric space for $G$ with level $K_f$ is denoted $S^G_{K_f}$. 
Let $T_0$ be a maximal torus in $G_0$, and $T = \Res_{F/\Q}(T_0)$. Let $E/\Q$ be a finite Galois extension that takes a copy of $F$; then $E$ splits $G$. 
Suppose $\lambda \in X^+(\Res_{F/\Q}(T_0) \times E)$ is a dominant integral weight, $\M_{\lambda, E}$ the absolutely-irreducible 
finite-dimensional representation of $G \times E$ of highest weight $\lambda$, and $\tM_{\lambda, E}$ the corresponding sheaf of $E$-vector spaces on $S^G_{K_f}$. 
Consider the sheaf-theoretically defined cohomology group $H^\bullet(\SGK, \tM_{\lambda, E}).$  Passing to a transcendental context via an embedding 
$\iota : E \to \C,$ one can study $H^\bullet(\SGK, \tM_{{}^\iota\lambda, \C})$ as the relative Lie algebra cohomology 
of the space of automorphic forms on $\SGK$ with values in $\tM_{{}^\iota\lambda, \C}.$ Within this space lives cuspidal cohomology $H_{\rm cusp}^\bullet(\SGK, \tM_{{}^\iota\lambda, \C})$ which 
was originally considered by Eichler and Shimura. 
The details of cuspidal cohomology are reviewed in Sect.\,\ref{sec:prelims-gln}. 
One knows from Clozel's purity lemma (\cite[Lemme de puret\'e 4.9]{clozel})
 that if cuspidal cohomology is nonzero then ${}^\iota\lambda$ satisfies a purity condition. Furthermore, since cuspidal cohomology has a rational structure (\cite[Th\'eor\`eme 3.19]{clozel})
$\lambda$ itself is pure, in the sense that ${}^\iota\lambda$ satisfies the same purity condition for every embedding $\iota : E \to \C;$ we will say that $\lambda$ {\it strongly-pure.} 
In Sect.\,\ref{sec:pure} we study strongly-pure weights; see Prop.\,\ref{prop:strong-pure-E} for the various ways to characterize them. 
Strong-purity is necessary for $\lambda$ to support cuspidal cohomology; the basic problem is whether it is also sufficient; see Conj.\,\ref{con:main} for a precise statement. 

In Prop.\,\ref{prop:strong-pure-weights-base-change} we prove that 
strong-purity imposes the internal structural restriction on $\lambda$ that it is the base-change from a (strongly-)pure weight of 
$X^+(\Res_{F_0/\Q}(T_0) \times E)$. 
We are able to leverage this internal restriction on $\lambda$ with the foundational work of Borel, Labesse, 
and Schwermer \cite{borel-labesse-schwermer} reviewed in Sect.\,\ref{sec:BLS}, while harnessing inputs from Vogan and Zuckerman \cite{vogan-zuckerman} 
on canonical $K$-types supporting cohomology that lets us compute an archimedean Lefschetz number (see Prop.\,\ref{prop:slnc-lefschetz}) whose nonvanishing  
proves the main theorem of this paper: 

\begin{thm}[Theorem \ref{thm:main-gln}]
Let $F$ be a number field which is Galois over its maximal totally real subfield. Let $G = \Res_{F/\Q}(\GL_n/F)$; rest of the notations as above. 
Suppose $\lambda \in X^+(\Res_{F/\Q}(T_0) \times E)$ is a strongly-pure weight, then we have nonvanishing of cuspidal cohomology for some level structure, i.e., for some deep enough open-compact subgroup $K_f$ of $G(\A_f)$ we have 
$$H_{\rm cusp}^\bullet(\SGK, \tM_{{}^\iota\lambda, \C}) \neq 0$$ 
for every embedding $\iota : E \to \C$. 
\end{thm}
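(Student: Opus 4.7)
The plan is to combine the internal structure of strongly-pure weights (Prop.~\ref{prop:strong-pure-weights-base-change}) with the Euler--Poincar\'e / Lefschetz machinery of Borel--Labesse--Schwermer \cite{borel-labesse-schwermer} and the canonical $K$-type theory of Vogan--Zuckerman \cite{vogan-zuckerman}: exhibit an explicit archimedean cohomological module, reduce the problem to the nonvanishing of an archimedean Lefschetz number, and verify this nonvanishing through a character-theoretic computation.

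First, apply Prop.~\ref{prop:strong-pure-weights-base-change} to write $\lambda$ as a base-change of a pure weight in $X^+(\Res_{F_0/\Q}(T_0)\times E)$. Concretely, for a complex place $v$ of $F$ above a real place $v_0$ of $F_0$, the components of ${}^\iota\lambda$ at $v$ and at the complex conjugate place $\bar v$ satisfy $\lambda_{\bar v,i} = \mathsf w - \lambda_{v,\,n+1-i}$ for all $i$. This rigidity singles out a canonical $\theta$-stable parabolic subalgebra $\mathfrak q_v \subset \mathfrak g_v$ at each archimedean $v$ (a Borel at a real place; at a complex place its Levi is dictated by the common pattern of $\lambda_v$ and $\lambda_{\bar v}$), and thereby the Vogan--Zuckerman module $A_{\mathfrak q_v}({}^\iota\lambda_v)$, a unitary irreducible $(\mathfrak g_v, K_v^\circ)$-module with nonzero cohomology in $\M_{{}^\iota\lambda_v,\,\C}$ and multiplicity-one canonical lowest $K_v^\circ$-type.

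Set $J_\infty := \bigotimes_{v\in V_\infty(F)} A_{\mathfrak q_v}({}^\iota\lambda_v)$. Following \cite{borel-labesse-schwermer}, attach to $J_\infty$ an Euler--Poincar\'e function $f_\infty^{\rm EP}$ whose trace against a cohomological $(\mathfrak g, K_\infty^\circ)$-module $\pi_\infty$ equals $\chi(\pi_\infty\,;\,\M_{{}^\iota\lambda,\C})$, and let $K_f$ be a sufficiently deep open compact subgroup of $G(\A_f)$. The BLS identity (Sect.~\ref{sec:BLS}) expresses $\Lef(f_\infty^{\rm EP}\otimes\mathbf 1_{K_f})$ as
\[
\sum_{\pi\subset L^2_{\rm cusp}} m_{\rm cusp}(\pi)\;\chi\bigl(\pi_\infty\,;\,\M_{{}^\iota\lambda,\C}\bigr)\;\dim\pi_f^{K_f} \;+\; (\text{Eisenstein terms controlled by BLS}),
\]
reducing nonvanishing of cuspidal cohomology (for some deep $K_f$) to the purely archimedean statement $\chi(J_\infty\,;\,\M_{{}^\iota\lambda,\C})\neq 0$.

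The main obstacle is this archimedean Lefschetz calculation, Prop.~\ref{prop:slnc-lefschetz}: one pairs the canonical lowest $K_\infty^\circ$-type of $J_\infty$ against $\M_{{}^\iota\lambda,\C}$ via the Vogan--Zuckerman character identities on a compact torus and shows the resulting Euler characteristic equals $\pm 1$. The Galois hypothesis on $F/F_0$ is essential precisely here: it is what forces strong purity to produce the base-change structure of the first step, which in turn is what makes $\mathfrak q_v$ genuinely $\theta$-stable at the complex places and ensures that $A_{\mathfrak q_v}({}^\iota\lambda_v)$ is unitary and cohomological. Since strong purity and the resulting base-change structure of $\lambda$ are $\iota$-independent, the construction and the nonvanishing hold verbatim for every embedding $\iota : E \to \C$, giving the conclusion uniformly in $\iota$.
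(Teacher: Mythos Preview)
Your proposal has a genuine gap at the archimedean step, and it stems from a misreading of what the Borel--Labesse--Schwermer input actually is.

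The criterion in Sect.~\ref{sec:BLS} (Thm.~\ref{thm:bls}) is \emph{not} an untwisted Euler--Poincar\'e identity. It requires a finite-order $\Q$-rational automorphism $\alpha$ of the group and asks for the nonvanishing of the \emph{twisted} Lefschetz number $\pi_\infty \mapsto \Lef(\alpha, G_\infty;\, H_{\pi_\infty}\otimes\M)$, i.e., the alternating sum of traces of $\alpha$ on the cohomology groups. You never produce such an $\alpha$; what you write down instead is the ordinary Euler characteristic $\chi(J_\infty;\,\M_{{}^\iota\lambda,\C})$. But at a complex place this untwisted quantity \emph{vanishes}: by Lem.~\ref{lem:coh-j-lambda} one has $\dim H^q = \binom{n-1}{q-n(n-1)/2}$, and $\sum_q (-1)^q \binom{n-1}{q-n(n-1)/2}=0$ for $n\ge 2$. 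So your claimed value $\pm 1$ is incorrect, and the argument collapses precisely where it needs to succeed. The paper's point is that the Galois hypothesis on $F/F_0$ supplies complex conjugation $\c \in \Gal(F/F_0)$, giving the $\Q$-rational automorphism $\alpha = \Res_{F/\Q}(\vartheta)\circ\c$ of $\Res_{F/\Q}(\SL_n)$ (with $\vartheta(g)={}^tg^{-1}$); at each complex place this becomes $\theta(g)={}^t\bar g^{-1}$, which acts by $(-1)^q$ on $\wedge^q\p^1_c$, converting the alternating sum into $c_{\chi_0}\sum_q\binom{n-1}{q-n(n-1)/2}=c_{\chi_0}2^{n-1}\neq 0$ (Prop.~\ref{prop:slnc-lefschetz}). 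Strong purity enters not to make a $\theta$-stable parabolic exist---those always exist---but to ensure that $\M^1_{{}^\iota\lambda}$ and the chosen $\pi_\infty$ are $\alpha$-stable (Lem.~\ref{lem:alpha-M-lambda}), so that the twisted Lefschetz number is even defined.

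There is a second structural issue: Thm.~\ref{thm:bls} is stated for almost absolutely-simple connected groups, so it does not apply directly to $G=\Res_{F/\Q}(\GL_n)$. The paper first passes to $\SL_n$ via Prop.~\ref{prop:gln-sln} (using Labesse--Schwermer \cite{labesse-schwermer-JNT} and a careful choice of central character), proves Thm.~\ref{thm:main-sln} for $\SL_n$ with the twisted Lefschetz argument above, and then lifts back to $\GL_n$. Your sketch skips this reduction entirely.
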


There is an interesting technical subproblem to overcome 
in applying \cite{borel-labesse-schwermer}: we need to relate the nonvanishing of cuspidal cohomology 
of $\GL(n)$ with which we are essentially interested to that of $\SL(n)$. This is addressed in Sect.\,\ref{sec:gln-sln} where the results of Labesse and Schwermer 
\cite{labesse-schwermer-JNT} play a critical role. 

\medskip

The above theorem has been known in various special cases: for $F$ totally real and $n=2$ it boils down to classical results on the existence of Hilbert modular forms of prescribed weight. 
More generally, the reader is referred to 
Labesse--Schwermer \cite{labesse-schwermer} when the field $F$ is filtered over a totally real field 
by cyclic or cubic extensions, $n \in \{2, 3\},$ and for $\lambda = 0$, i.e., for trivial coefficients; 
Clozel \cite{clozel-duke} for general $F$, $n$ even, and $\lambda = 0$; 
Borel--Labesse--Schwermer \cite{borel-labesse-schwermer} for general $n$, $\lambda = 0$, and $F$ filtered over a totally real field by cyclic extensions; 
Rajan \cite{rajan} for $\SL_1(D)$ for a quaternionic division algebra $D$ with some additional restrictions on $F$ and $\lambda$; 
Grobner \cite{grobner} for $\GL_2(D)$ for a quaternionic division algebra $D$ over $\Q$ and a self-dual $\lambda$; 
Bhagwat--Raghuram \cite{bhagwat-raghuram-BIMS} for $F$ totally real 
or a totally imaginary quadratic extension over a totally real field and $\GL(n)$ for general $n$ using Arthur classification for classical groups \cite{arthur-book} and 
Clozel's limit multiplicity theorems \cite{clozel-inventiones}. 
Some other related works are Rolhfs--Speh \cite{rohlfs-speh-ENS} who introduced the technique of computing certain automorphic Lefschetz numbers; see also 
Barbasch--Speh \cite{barbasch-speh}; Rolhfs--Speh \cite{rohlfs-speh-Nach} and Li--Schwermer \cite{li-schwermer} for similar questions for other reductive groups; 
Boxer--Calegari--Gee \cite{boxer-calegari-gee} where $F = \Q$ and the level structure $K_f$ is the full maximal open compact in $G(\A_f)$ rendering the situation to be arithmetically delicate. The interested reader may survey the literature on cuspidal cohomology using the sampling of papers mentioned in this paragraph. 

\medskip

Nonvanishing of cuspidal cohomology is fundamentally about the geometry of a locally symmetric space which has important applications in the arithmetic theory of the special values of $L$-functions 
attached to cohomological cuspidal automorphic forms. See, for example, Harder--Raghuram \cite[Chap.\,5]{harder-raghuram-book}. 
Indeed, this was also our original motivation to study this problem. Generalizations of the main theorem and such arithmetic applications will be taken up in some future work.

\medskip
\section{Preliminaries on the cuspidal cohomology of $\GL(n)$}
\label{sec:prelims-gln}

This section is to set up our notations. We will be brief here and refer the reader to \cite{harder-raghuram-book} and \cite{raghuram} for more details. 

\subsection{The base field}
\label{sec:base-field}
Let $F$ be a number field by which we mean a finite extension of $\Q$, and although not strictly necessary, it is convenient to take $F$ to be contained inside the field $\C$ of complex numbers.  
Let $\Sigma_F = \Hom(F,\C)$ be the set of all embeddings of $F$ into $\C.$ 
The completion of $F$ with respect to a place $v$ is denoted $F_v$. Suppose 
$S_r$ and $S_c$ denote the sets of real and complex places of $F$ respectively, then $S_\infty = S_r \cup S_c$ denotes the set of all archimedean places. 
An element $v \in S_r$ corresponds to a unique embedding 
$\tau_v \in \Hom(F,\R)$ that extends to an isomorphism $F_v \simeq \R$, 
whereas a $v \in S_c$ corresponds to a pair $\{\tau_v, \overline{\tau}_v \} \subset \Sigma_F$ of conjugate embeddings in which a non-canonical 
choice of $\tau_v$ is fixed that extends to an isomorphism $F_v \simeq \C.$ (See \ref{sec:action-of-c} where making this non-canonical choice plays a role.)
Let $F_{\infty}:=F\otimes_{\Q} \R$. For a finite set $S=S_f\cup S_{\infty}$ of places containing all archimedean places, let $F_S=\prod_{v\in S} F_v$. 

\smallskip

Let $F_0$ be the unique maximal totally real subfield of $F$. Even if $F$ is totally imaginary, it may or may not contain a totally imaginary quadratic extension of $F_0$; if it does, then let $F_1$ denote such a subfield---it is the unique maximal CM-subfield of $F$; and if $F$ does not contain a CM-subfield then for homogeneity of notations, put $F_1 = F_0.$

\subsection{Locally symmetric spaces}
\label{sec:loc-symm-space}

Let $G_0= \GL_n/F$ and $G = \Res_{F/\Q}(G_0)$ the group obtained from $G_0$ by the Weil restriction of scalars from $F$ to $\Q$. 
Let $B_0$ be the Borel subgroup of upper triangular matrices in $G_0$, $T_0$ the maximal torus in $B_0$, and $Z_0$ the center of $G_0.$ The corresponding groups obtained 
via Weil restriction of scalars from $F$ to $\Q$ are denoted $B$, $T$, and $Z$, respectively. Let $S$ denote the maximal $\Q$-split torus in $Z$; note that $S \cong {\mathbb G}_m/\Q.$ 
For the group of $\R$-points one has $G(\R)\cong \prod_{v \in S_r}\GL_n(\R)\times \prod_{v \in S_c}\GL_n(\C)$, within which $Z(\R)$ sits as $\prod_{v \in S_r} \R^{\times} I_n \times \prod_{v \in S_c}\C^{\times}I_n,$ where $I_n$ denotes the $n \times n$ identity matrix. Furthermore, $S(\R) = \R^\times$ sits diagonally in $Z(\R)$. 
Let $C_{\infty} \cong \prod_{v \in S_r} \rO(n)\times \prod_{v \in S_c} \rU(n)$ be the usual maximal compact subgroup of $G(\R)$. 
Let $K_{\infty}=C_{\infty}S(\R)$  and $K_{\infty}^{\circ}$ its group of connected component of the identity. 
Define the symmetric space associated to $G$ as $S^G := G(\R)/K_{\infty}^{\circ}$. 
For an open compact subgroup $K_f$ of $G(\A_{f})$ define the adelic symmetric space as $G(\A)/K_{\infty}^{\circ}K_f\cong S^G\times G(\A_{f})/K_f.$ Since $G(\Q)$ is a 
discrete subgroup of $G(\A)$, it acts properly discontinuously on $G(\A)/K_{\infty}^{\circ}K_f$, and going modulo this action we get the locally symmetric space 
$S^G_{K_f}=G(\Q)\backslash G(\A)/K_{\infty}^{\circ}K_f.$
for $G$ with level $K_f$. Let $\pi: S^G \rightarrow S^G_{K_f}$ be the canonical map of quotienting by $G(\Q)$. If necessary, by passing to a subgroup of finite index of $K_f$, one has a finite disjoint union 
$S^G_{K_f}=\bigsqcup \Gamma_i\backslash G(\R)/K_{\infty}^{\circ}$ with each $\Gamma_i$ being torsion-free. In this article we will always assume that $K_f$ satisfies this property.

\medskip
\subsection{Sheaves on locally symmetric spaces}

\medskip
\subsubsection{\bf The field of coefficients $E$} 
Let $E/\Q$ be a `large enough' finite Galois extension that takes a copy of $F.$ This field $E$ will be the field of coefficients. Whereas our ultimate object of interest is cuspidal cohomology---a transcendental object---for which one could drop finiteness and take $E = \C$. However, keeping a larger context in mind, wherein one is interested in arithmetic applications 
of cuspidal cohomology, it is best to set things up over such a general field $E$ of coefficients. An embedding $\iota : E \to \C$ gives a bijection 
$\iota_* : \Hom(F,E) \to \Hom(F,\C)$ given by composition: $\iota_*\tau = \iota \circ \tau.$

\medskip
\subsubsection{\bf Characters of the torus $T$}
\label{sec:characters-of-torus}
For $E$ as above, let 
$X^*(T \times E) := \Hom_{E-{\rm alg}}(T \times E, \mathbb{G}_m),$
where $ \Hom_{E-{\rm alg}}$ is to mean homomorphisms of $E$-algebraic groups. There is a natural action of ${\rm Gal}(E/\Q)$ on 
$X^*(T \times E)$. Since $T = \Res_{F/\Q}(T_0)$, and $T_0$ is split over $F$, one has  
$$
X^*(T \times E) \ = \ \bigoplus_{\tau : F \to E} X^*(T_0 \times_{F,\tau} E)  \ = \ \bigoplus_{\tau : F \to E} X^*(T_0).  
$$
Let $X^*_\Q(T \times E) = X^*(T \times E) \otimes \Q.$ 
The weights are parametrized as in \cite{harder-raghuram-book}: $\lambda \in X^*_\Q(T \times E)$ will be written as 
$\lambda = (\lambda^\tau)_{\tau : F \to E}$ with 
$$
\lambda^\tau \ = \ \sum_{i=1}^{n-1} (a^\tau_i-1)  \bfgreek{gamma}_i \ + \ d^\tau \cdot \bfgreek{delta}_n 
 \ = \ (b^\tau_1, b^\tau_2, \dots, b^\tau_n), 
$$
where, $\bfgreek{gamma}_i$ is the $i$-th fundamental weight for $\SL_n$ extended to $\GL_n$ by making it trivial on the center, and $\bfgreek{delta}_n$ is the determinant character of $\GL_n.$ If $r_{\lambda} := (nd - \sum_{i=1}^{n-1} i (a_i-1))/n,$ 
then $b_1 =  a_1 + a_2 + \dots + a_{n-1} - (n-1) + r_{\lambda}, \ 
b_2  =  a_2 + \dots + a_{n-1} - (n-2) + r_{\lambda}, \dots, b_{n-1} =  a_{n-1} - 1 + r_{\lambda}, \ 
b_n  =  r_{\lambda},$ and conversely, 
$a_i  - 1 =  b_i - b_{i+1}, \ d  =  (b_1+\dots+b_n)/n.$

\smallskip

A weight $\lambda = \sum_{i=1}^{n-1} (a_i-1) \bfgreek{gamma}_i+ d \cdot \bfgreek{delta}_n  = (b_1,\dots,b_n)  \in X^*_\Q(T_0)$ is integral if and only if 
$$
\lambda \in X^*(T_0) \ \Longleftrightarrow \ 
b_i \in \Z, \ \forall i 
 \ \Longleftrightarrow \ 
\left\{\begin{array}{l} 
a_i \in \Z, \quad 1 \leq i \leq n-1, \\
nd \in \Z, \\
nd \equiv \sum_{i=1}^{n-1} i (a_i-1) \pmod{n}.
\end{array}\right.
$$
A weight $\lambda = (\lambda^\tau)_{\tau : F \to E} \in X^*_\Q(T \times E)$ is integral if and only if each $\lambda^\tau$ is integral.
Next, an integral weight $\lambda \in X^*(T_0)$ is dominant, for the choice of the Borel subgroup being $B_0$,  if and only if 
$$
b_1 \geq b_2 \geq \dots \geq b_n
 \ \Longleftrightarrow \ a_i \geq 1 \  
 \mbox{for $1 \leq i \leq n-1.$ \ (No condition on $d$.)}
$$
A weight $\lambda = (\lambda^\tau)_{\tau : F \to E} \in X^*_\Q(T \times E)$ is dominant-integral if and only if each $\lambda^\tau$ is dominant-integral. Let 
$X^+(T \times E)$ stand for the set of all dominant-integral weights.

\smallskip
An embedding $\iota : E \to \C$ induces a bijection $X^*(T\times E) \to X^*(T\times \C)$
defined as 
$$
\lambda=(\lambda^{\tau})_{\tau: F \to E} 
 \ \ \mapsto \ \
 {^{\iota}}\lambda=(^{\iota}\lambda^{\eta})_{\eta : F \to \C},  
 \quad ^{\iota}\lambda^{\eta}=\lambda^{\iota^{-1}\circ\eta}.
$$

\medskip
\subsubsection{\bf The sheaf $\tM_{\lambda, E}$}
\label{sec:the-sheaf}
For $\lambda \in X^+(T \times E)$, put 
 $\M_{\lambda, E} \ = \ \bigotimes_{\tau : F \to E} \M_{\lambda^\tau},$
 where $\M_{\lambda^\tau}/E$ is the absolutely-irreducible finite-dimensional representation of 
 $G_0 \times_\tau E = \GL_n/F \times_\tau E$ with highest weight  $\lambda^\tau.$ Denote this representation as 
 $(\rho_{\lambda^\tau}, \M_{\lambda^\tau})$. The group $G(\Q) = \GL_n(F)$ acts on $\M_{\lambda, E}$ diagonally, i.e., 
$a \in G(\Q)$ acts on a pure tensor
$\otimes_\tau m_\tau$ via: 
$a \cdot (\otimes_\tau m_\tau) \ = \ \otimes_\tau \rho_{\lambda^\tau}(\tau(a))(m_\tau).$
This representation gives a sheaf $\tM_{\lambda, E}$ of $E$-vector spaces on $\SGK$ (see \cite{harder-raghuram-book} and \cite{raghuram}). 
The sheaf $\tM_{\lambda, E}$ is nonzero only if 
the central character of $\rho_\lambda$ has the type of an algebraic Hecke character of $F$ (see \cite[1.1.3]{harder-inventiones} and \cite[Lem.\,17]{raghuram-hecke}), 
which means: 
\begin{itemize}
    \item if $F$ has a real place, then there exists $w\in \Z$ such that $d^{\eta}=w$ for all $\eta\in \Sigma_{F}$; 
    \item if $F$ is totally imaginary then there exists $w\in \Z$ such that $d^{\iota^{-1}\circ\eta}+d^{\iota^{-1}\circ \overline{\eta}}=w$ for all $\tau \in \Sigma_{F}$ and $\iota: E\rightarrow \C$.
\end{itemize}
A dominant integral weight is called algebraic if it satisfies the above condition;  the set of algebraic dominant integral weights will be denoted $X^+_{\rm alg}(T\times E)$. 


\medskip
\subsection{Cohomology of $\tM_{\lambda, E}$} 
\label{sec:long-e-seq}

For $\lambda \in X^+_{\rm alg}(T \times E)$, a basic object of study is the sheaf-cohomology group $H^\bullet(\SGK, \tM_{\lambda,E})$. One of the main tools is a long exact sequence coming from the Borel--Serre compactification. 
Let $\BSC = \SGK \cup \partial\SGK$ be the Borel--Serre compactification of $\SGK$. The sheaf $\tM_{\lambda,E}$ extends naturally to $\partial\SGK$ and so also to $\BSC$, and 
the pair $(\BSC, \partial\SGK)$ induces the following long-exact sequence in cohomology: 
\begin{multline}
\label{eqn:lon-ex-seq}
\cdots  \longrightarrow H^i_c(\SGK, \tM_{\lambda, E}) 
\stackrel{\mathfrak{i}^\bullet}{\longrightarrow}   H^i(\SGK, \tM_{\lambda,E}) 
\stackrel{\mathfrak{r}^\bullet}{\longrightarrow } H^i(\partial \SGK, \tM_{\lambda,E}) \stackrel{\fd^\bullet}{\longrightarrow} \\
\stackrel{\fd^\bullet}{\longrightarrow} H^{i+1}_c(\SGK, \tM_{\lambda,E}) \longrightarrow \cdots
\end{multline}
This is an exact sequence of Hecke modules, for the action of the Hecke algebra $\HK = C^\infty_c(G(\A_f)/ \! \!/K_f)$ of all locally constant and compactly supported bi-$K_f$-invariant $\Q$-valued functions on $G(\A_f);$ the Haar measure on $G(\A_f)$ to be the product of local Haar measures, and for every prime $p$, the local measure is normalized so that 
$\mathrm{vol}(G(\Z_p)) = 1;$ then $\HK$ is a $\Q$-algebra under convolution of functions. 
The image of cohomology with compact supports inside the full cohomology is called {\it inner} or {\it interior} cohomology and is denoted 
$H^{\bullet}_{\, !} := {\rm Image}(\mathfrak{i}^\bullet) = {\rm Im}(H^{\bullet}_c \to H^{\bullet}).$ 
Inner cohomology is a semi-simple module for the Hecke-algebra. 
If $E/\Q$ is sufficiently large, then there is an isotypical decomposition:  
\begin{align}
\label{deco}  
H^\bullet_{\,!}(\SGK, \M_{\lambda,E}) \ =\ 
\bigoplus_{\pi_f \in {\rm Coh}_!(G,K_f,\lambda)} H^\bullet_{\,!}(\SGK, \M_{\lambda,E})(\pi_f), 
\end{align}
where $\pi_f$ is an isomorphism type of an absolutely  irreducible $\HK$-module, i.e., there is an $E$-vector space $V_{\pi_f}$ with an absolutely irreducible action $\pi_f$ of $\HK$.  
The set ${\mathrm{Coh}}_!(G, K_f, \lambda)$ of isomorphism classes which occur with strictly positive multiplicity in \eqref{deco} is called the inner spectrum of $G$ with $\lambda$-coefficients and level structure $K_f.$ Taking the union over all $K_f$, the inner spectrum of $G$ with $\lambda$-coefficients is defined to be: 
${\mathrm{Coh}}_!(G, \lambda) \ = \ \bigcup_{K_f} {\mathrm{Coh}}_!(G, K_f, \lambda). $

\medskip
\subsection{Cuspidal cohomology} 
\label{sec:cuspidal-coh}
To study these cohomology groups one passes to a transcendental situation using an embedding $E \hookrightarrow \C$, and then use the theory of automorphic forms on $G$.
To simplify notations, for the moment, take $E = \C$ and $\lambda \in X^+_{\rm alg}(T \times \C).$ 
Denote $\fg_\infty$ (resp., $\fk_\infty$) the Lie algebra of $G(\R)$ (resp., of $K_\infty = C_\infty S(\R).$)
The cohomology $H^\bullet(\SGK,\tM_{\lambda, \C})$ is the cohomology of the de~Rham complex $\Omega^\bullet(\SGK, \tM_{\lambda, \C}),$ which 
is isomorphic to the relative Lie algebra complex: 
$$
\Hom_{K_\infty^\circ} (\Lambda^\bullet(\fg_\infty/\fk_\infty), \, 
\cC^\infty(G(\Q)\backslash G(\A)/K_f, \omega_{\lambda}^{-1}|_{S(\R)^\circ}) \otimes  \M_{\lambda, \C}),
$$
 where $\cC^\infty(G(\Q)\backslash G(\A)/K_f, \omega_{\lambda}^{-1}|_{S(\R)^\circ})$ consists of all smooth functions 
 $\phi : G(\A) \to \C$ such that $\phi(a \, \ul g \, \ul k_f \, s_\infty) = \omega_{\lambda}^{-1}(s_\infty) \phi(\ul g),$ 
 for all $a \in G(\Q)$, $\ul g \in G(\A)$, $\ul k_f \in K_f$ and $s_\infty \in S(\R)^\circ.$ 
 Abbreviating $\omega_{\lambda}^{-1}|_{S(\R)^\circ}$ as $\omega_\infty^{-1},$ if $t \in \R_{>0} \cong S(\R)^\circ$ then 
 $ \omega_{\lambda}(t) \ = \ t^{N \sum_{\tau : F \to \C} d^\tau} \ = \ t^{\sum_\tau \sum_i b_i^\tau}.$
 The identification of the complexes gives an identification of $H^\bullet(\SGK, \tM_{\lambda,\C})$ 
 with the relative Lie algebra cohomology of the space of smooth automorphic forms twisted by the coefficient system: 
 $$
H^\bullet(\SGK, \tM_{\lambda,\C}) \ \cong \ 
H^\bullet(\fg_\infty, \fk_\infty; \, \cC^\infty(G(\Q)\backslash G(\A)/K_f, \omega_{\lambda}^{-1}|_{S(\R)^\circ}) \otimes  \M_{\lambda, \C}).
$$
The inclusion $\cC_{\rm cusp}^\infty(G(\Q)\backslash G(\A)/K_f, \omega_\infty^{-1}) \subset   
  \cC^\infty(G(\Q)\backslash G(\A)/K_f, \omega_\infty^{-1}),$ 
of the space of smooth cusp forms,  
induces an inclusion in relative Lie algebra cohomology (due to Borel \cite{borel-duke}), and cuspidal cohomology is defined as: 
\begin{equation}
\label{eqn:def-cusp-coh}
 H_{\rm cusp}^\bullet(\SGK, \tM_{\lambda, \C}) \ := \ 
H^\bullet \left(\fg_\infty, \fk_\infty;  \, 
\cC^\infty_{\rm cusp}(G(\Q)\backslash G(\A)/K_f, \omega_\infty^{-1})  \otimes \M_{\lambda, \C} \right).
\end{equation}
One may, whenever convenient, pass to the limit over all level structures $K_f$ and define:
\begin{equation}
\label{eqn:def-cusp-coh-no-Kf}
H_{\rm cusp}^\bullet(G, \M_{\lambda, \C}) \ := \ 
H^\bullet \left(\fg_\infty, \fk_\infty;  \, 
\cC^\infty_{\rm cusp}(G(\Q)\backslash G(\A), \omega_\infty^{-1})  \otimes \M_{\lambda, \C} \right).
\end{equation}

Furthermore, one knows that the image of cuspidal cohomology under the restriction map $\mathfrak{r}^\bullet$ in \eqref{eqn:lon-ex-seq} is trivial, and hence 
$H_{\rm cusp}^\bullet(\SGK, \M_{\lambda, \C}) \subset H_{!}^\bullet(\SGK, \M_{\lambda, \C}).$ 
Define $\Coh_{\rm cusp}(G,\lambda,K_f)$ as the set of all $\pi_f \in \Coh_!(G, \lambda,K_f)$ 
which contribute to cuspidal cohomology.  The decomposition of cuspforms into cuspidal automorphic representations, 
gives the following fundamental decomposition for cuspidal cohomology: 
\begin{equation}
\label{eqn:cuspidal-coh-spectrum}
 H_{\rm cusp}^\bullet(\SGK, \tM_{\lambda, \C}) \ := \ 
 \bigoplus_{\pi \in \Coh_{\rm cusp}(G,\lambda,K_f)}  
 H^\bullet(\fg_\infty, \fk_\infty; \pi_\infty \otimes \M_{\lambda, \C}) \otimes \pi_f.
\end{equation}
To clarify a slight abuse of notation: if a cuspidal automorphic representation $\pi$ contributes to the above decomposition, then its representation at infinity is $\pi_\infty$, however 
$\pi_f$ denotes the $K_f$-invariants of the finite part of $\pi$. The level structure 
$K_f$ will be clear from context, hence whether $\pi_f$ denotes the finite-part or its $K_f$-invariants will be clear from context.  
Define ${\mathrm{Coh}}_{\rm cusp}(G, \lambda) \ = \ \bigcup_{K_f} {\mathrm{Coh}}_{\rm cusp}(G, K_f, \lambda).$

\section{Strongly pure weights}
\label{sec:pure} 
  
\medskip
\subsection{\bf Strongly-pure weights over $\C$}
\label{sec:strong-pure-C} 
Consider a weight $\lambda = (\lambda^\eta)_{\eta:F \to \C} \in X^+_{\rm alg}(T \times \C)$, where 
$\lambda^\eta \ = \ \sum_{i=1}^{n-1} (a^\eta_i-1)  \bfgreek{gamma}_i \ + \ d^\eta \cdot \bfgreek{delta}_n =  (b^\eta_1, \dots, b^\eta_n)$. 
If $\lambda$ supports cuspidal cohomology, i.e., if $H_{\rm cusp}^\bullet(\SGK, \tM_{\lambda, \C}) \neq 0$, then 
$\lambda$ satisfies the purity condition: 
\begin{equation}
\label{eqn:purity-def}
a_i^\eta = a_{n-i}^{\bar\eta} \ \mbox{for all $\eta : F \to \C$}\ \iff \ 
 \mbox{$\exists \,\w$ such that $b^\eta_i + b^{\bar\eta}_{n-i+1} = \w$ for all $\eta$ and $i$,}
\end{equation}
which follows from the purity lemma \cite[Lem.\,4.9]{clozel}. The integer $\w$ is called the {\it purity weight} of $\lambda.$ Taking the sum of the  
equation $b^\eta_i + b^{\bar\eta}_{n-i+1} = \w$ over all $i$, one deduces that $d^\eta = \w/2$ for all $\eta.$ 
The weight $\lambda$ is said to be {\it pure} if it 
satisfies \eqref{eqn:purity-def}, and denote by $X^+_{0}(T \times \C)$ the set of all such pure weights.

\medskip

Next, recall a theorem of Clozel which says that cuspidal cohomology for $\GL_n/F$ admits a rational structure 
\cite[Thm.\,3.19]{clozel}, from which it follows that any $\varsigma \in \Aut(\C)$ stabilizes cuspidal cohomology, i.e., ${}^\varsigma\lambda$ also 
satisfies the above purity condition, where if $\lambda = (\lambda^\eta)_{\eta: F \to \C}$ and $\varsigma \in \Aut(\C)$, then 
${}^\varsigma\lambda$ is the weight $({}^\varsigma\lambda^\eta)_{\eta: F \to \C}$ with ${}^\varsigma\lambda^\eta = \lambda^{\varsigma^{-1}\circ \eta}$. 
A pure weight $\lambda$ will be called {\it strongly-pure}, if ${}^\varsigma\lambda$ is pure with purity-weight $\w$ for every $\varsigma \in \Aut(\C);$ denote by 
$X^+_{00}(T \times \C)$ the set of all such strongly-pure weights. 
For $\lambda \in X^+_{00}(T \times \C),$ note that 
$$
b^{\varsigma^{-1} \circ \eta}_j + b^{\varsigma^{-1} \circ \overline{\eta}}_{n-j+1} = \w, \ \mbox{for all \ $1 \leq j \leq n, \ \eta : F \to \C, \ \varsigma \in \Aut(\C)$}. 
$$ 
We have the following inclusions inside the character group of $T \times \C,$ which are all, in general, strict inclusions: 
 $$
 X^+_{00}(T \times \C) \ \subset \  X^+_{0}(T \times \C) \ \subset \ X^+_{\rm alg}(T \times \C) \ \subset \ X^+(T \times \C) \ \subset \ X^*(T \times \C).
 $$

\medskip
\subsection{\bf Strongly-pure weights over $E$}
\label{sec:strongly-pure-E} 
 
The set of strongly-pure weights may be defined at an arithmetic level. Recall the standing assumption on $E$ that it is a finite Galois extension of $\Q$ that 
takes a copy of $F$; in particular, any embedding $\iota: E \to \C$ factors as $\iota: E \to \bar\Q \subset \C.$ See \cite[Prop.\,2.4]{raghuram} for the following proposition:

\begin{prop}
\label{prop:strong-pure-E}
Let $\lambda \in X^+_{\rm alg}(T \times E)$ be an algebraic dominant integral weight. Suppose 
$\lambda = (\lambda^\tau)_{\tau : F \to E}$ with $\lambda^\tau = (b^\tau_1 \geq \cdots \geq b^\tau_n)$. Then, the following are equivalent: 

\smallskip
\begin{enumerate}
\item[(i)] There exists $\iota : E \to \C$ such that ${}^\iota\lambda \in X^+_{00}(T \times \C)$, i.e., for every $\gamma \in \Gal(\bar\Q/\Q)$ we have 
${}^{\gamma \circ \iota}\lambda \in X^+_{0}(T \times \C)$ with the same purity weight: 
\begin{multline*}
``\exists \, \iota : E \to \C, \ \exists \, \w \in \Z \ \ \mbox{such that} \ \ 
b_j^{\iota^{-1}\circ \gamma^{-1} \circ \eta} + b_{n-j+1}^{\iota^{-1}\circ \gamma^{-1} \circ \bar\eta} \ = \ \w, \\ 
\forall \gamma \in \Gal(\bar\Q/\Q), \ \forall \eta : F \to \C, \ 1 \leq  j \leq n."
\end{multline*}

\smallskip
\item[(ii)] For every $\iota : E \to \C$, ${}^\iota\lambda \in X^+_{00}(T \times \C)$, i.e., for every $\gamma \in \Gal(\bar\Q/\Q)$ we have 
${}^{\gamma \circ \iota}\lambda \in X^+_{00}(T \times \C)$ with the same purity weight: 
\begin{multline*}
``\exists\, \w \in \Z \ \ \mbox{such that} \ \ 
b_j^{\iota^{-1}\circ \gamma^{-1} \circ \eta} + b_{n-j+1}^{\iota^{-1}\circ \gamma^{-1} \circ \bar\eta} \ = \ \w, \\  
\forall \, \iota : E \to \C, \ \forall\, \gamma \in \Gal(\bar\Q/\Q), \ \forall\, \eta : F \to \C, \ 1 \leq  j \leq n."
\end{multline*}

\smallskip
\item[(iii)] For every $\iota : E \to \C$, ${}^\iota\lambda \in X^+_{0}(T \times \C)$ with the same purity weight: 
$$
``\exists\, \w \in \Z \ \ \mbox{such that} \ \ 
b_j^{\iota^{-1}\circ \eta} + b_{n-j+1}^{\iota^{-1}\circ \bar\eta} \ = \ \w, \ 
\forall \, \iota : E \to \C,  \ \forall\, \eta : F \to \C, \ 1 \leq  j \leq n."
$$
\end{enumerate}
\end{prop}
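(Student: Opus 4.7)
The plan is to prove the three equivalences by essentially linguistic manipulation, exploiting only the hypothesis that $E/\Q$ is Galois. First I would unravel the definition of strong-purity: because $E/\Q$ is finite Galois, every embedding $\iota:E\to\C$ has image in $\bar\Q$, and for $\varsigma\in\Aut(\C)$ the twisted weight ${}^\varsigma({}^\iota\lambda)$ depends only on $\gamma:=\varsigma|_{\bar\Q}\in\Gal(\bar\Q/\Q)$. A direct computation shows
$$
\bigl({}^\varsigma({}^\iota\lambda)\bigr)^\eta \ = \ \lambda^{\iota^{-1}\circ\gamma^{-1}\circ\eta} \ = \ \bigl({}^{\gamma\circ\iota}\lambda\bigr)^\eta,
$$
which justifies replacing the $\varsigma\in\Aut(\C)$ quantifier in the definition of $X^+_{00}(T\times\C)$ by the $\gamma\in\Gal(\bar\Q/\Q)$ quantifier appearing in the displays of (i) and (ii). With this reformulation, the implications (ii) $\Rightarrow$ (i) (restrict to one $\iota$) and (ii) $\Rightarrow$ (iii) (take $\gamma=1$) are immediate.

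Next I would prove (iii) $\Rightarrow$ (ii). The key observation is that for any embedding $\iota:E\to\C$ and any $\gamma\in\Gal(\bar\Q/\Q)$, the composition $\iota':=\gamma\circ\iota$ is itself an embedding $E\to\C$, since $\iota(E)\subset\bar\Q$ is stable under $\gamma$. Applying the hypothesis of (iii) to $\iota'$ gives
$b_j^{(\iota')^{-1}\circ\eta}+b_{n-j+1}^{(\iota')^{-1}\circ\bar\eta}=\w$
for every $\eta$ and $j$, which unpacks exactly to the condition in (ii) for the pair $(\iota,\gamma)$.

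Finally I would prove (i) $\Rightarrow$ (ii). Fix the witness $\iota_0:E\to\C$ from (i) and let $\iota:E\to\C$ be an arbitrary embedding. Because $E/\Q$ is Galois, the subfield $\iota(E)\subset\bar\Q$ coincides with $\iota_0(E)$, and there exists $\sigma\in\Gal(\bar\Q/\Q)$ with $\iota=\sigma\circ\iota_0$; in other words, $\Hom(E,\C)$ is a single orbit for the $\Gal(\bar\Q/\Q)$-action by post-composition. Then for any $\gamma\in\Gal(\bar\Q/\Q)$ one has $\gamma\circ\iota=(\gamma\sigma)\circ\iota_0$, and the condition in (i) applied to the element $\gamma\sigma$ delivers purity of ${}^{\gamma\circ\iota}\lambda$ with the same weight $\w$, as required. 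The one substantive input to the argument is this Galois-orbit observation in the last step; there is no real obstacle, as the proposition becomes essentially formal once definitions are expanded and the Galois hypothesis on $E/\Q$ is invoked to compare different embeddings.
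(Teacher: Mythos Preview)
Your argument is correct. The three implications you give are exactly the right ones: (ii)$\Rightarrow$(i) and (ii)$\Rightarrow$(iii) by specialization; (iii)$\Rightarrow$(ii) because $\gamma\circ\iota$ is again an embedding $E\to\C$; and (i)$\Rightarrow$(ii) because $E/\Q$ Galois forces $\Hom(E,\C)$ to be a single $\Gal(\bar\Q/\Q)$-orbit. The preliminary identification ${}^\varsigma({}^\iota\lambda)={}^{\gamma\circ\iota}\lambda$ with $\gamma=\varsigma|_{\bar\Q}$ is also correct and is what makes the $\Aut(\C)$ and $\Gal(\bar\Q/\Q)$ formulations interchangeable.

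As for comparison: the paper does not actually prove this proposition in the text; it simply cites \cite[Prop.\,2.4]{raghuram}. So your write-up is more self-contained than what appears here. The argument you give is the natural one and is presumably what the cited reference does as well, since there is essentially only one way to unwind these quantifiers once the Galois hypothesis on $E$ is in hand.
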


\medskip 
The set of strongly-pure weights over $E$, denoted $X^+_{00}(T \times E)$ consists of the algebraic dominant integral weights 
$\lambda \in X^*(T \times E)$ that satisfy any one, and hence all, of the
conditions in the above proposition. It is most convenient to work with the characterization in $(iii)$.
There are the following inclusions within the character group of $T \times E$, which are all, in general, strict inclusions: 
 $$
 X^+_{00}(T \times E) \ \subset \ X^+_{\rm alg}(T \times E) \ \subset \ X^+(T \times E) \ \subset \ X^*(T \times E).
 $$

\medskip
\subsection{\bf Inner structure of strongly-pure weights}
\label{sec:base-change-of-weights}

The crucial fact about a strongly-pure weight is the following proposition that any such weight over a general number field $F$ is the base-change from a (strongly-)pure weight 
over $F_1$. Recall that $F_1$ is the maximal CM or maximal totally real subfield of $F$. This was observed in \cite{raghuram} where the base field was a totally imaginary number 
field. The statement and its proof go through just the same for a general number field. For the sake of completeness we state and prove it as the following proposition. 

\begin{prop}
\label{prop:strong-pure-weights-base-change}
Suppose $\lambda \in X^+_{00}(\Res_{F/\Q}(T_0) \times E)$ is a strongly-pure weight. Then there exists $\kappa \in X^+_{00}(\Res_{F_1/\Q}(T_0) \times E)$ such that 
$\lambda$ is the base-change of $\kappa$ from $F_1$ to $F$ in the sense that for any $\tau : F \to E$, $\lambda^\tau = \kappa^{\tau|_{F_1}}.$ 
\end{prop}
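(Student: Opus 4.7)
The claim is that $\lambda^\tau$ depends only on $\tau|_{F_1}$. Granting this, one defines $\kappa^\mu := \lambda^{\tilde\mu}$ for any extension $\tilde\mu : F \to E$ of $\mu : F_1 \to E$; well-definedness is then automatic, and the strong-purity of $\kappa$ follows from that of $\lambda$ by applying characterization $(iii)$ of Proposition~\ref{prop:strong-pure-E} to lifts of complex-conjugate pairs of embeddings $F_1 \to \C$. The main task is therefore the invariance: if $\tau_1|_{F_1} = \tau_2|_{F_1}$, then $\lambda^{\tau_1} = \lambda^{\tau_2}$. I would prove this in three steps.

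The first step is to extract a duality from characterization $(iii)$. For an embedding $\iota : E \to \C$, write $c_\iota := \iota^{-1} \circ c \circ \iota \in \Gal(E/\Q)$, where $c$ denotes complex conjugation on $\C$; then the purity relation reads $b_j^\tau + b_{n-j+1}^{c_\iota \tau} = \w$ for every $\tau : F \to E$ and every $j$. Applying this identity to $c_\iota \tau$ with the same $\iota$ (so that $c_\iota c_\iota \tau = \tau$) yields $b_j^{c_\iota \tau} = \w - b_{n-j+1}^\tau$, and applying the identity twice gives $\lambda^{c_{\iota'} c_\iota \tau} = \lambda^\tau$ for any $\iota, \iota'$. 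Consequently $\lambda^{h\tau} = \lambda^\tau$ for every $\tau$ and every element $h$ of the subgroup $H \subset \Gal(E/\Q)$ generated by products of pairs of complex conjugations; note that $H$ is normal because the set of complex conjugations is a conjugacy class.

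The second step is a Galois-theoretic reformulation. Write $\tau_2 = \sigma \circ \tau_1$ with $\sigma \in \Gal(E/L)$, where $L := \tau_1(F_1)$, and set $M := \tau_1(F)$. It suffices to show $\sigma \in H \cdot \Gal(E/M)$, for then $\sigma = h g$ with $g \in \Gal(E/M)$ fixing $\tau_1(F)$ pointwise; hence $\tau_2 = h\tau_1$, and the first step concludes $\lambda^{\tau_2} = \lambda^{\tau_1}$. Since $H$ is normal, $H \cdot \Gal(E/M) = \Gal(E/(E^H \cap M))$, so the required containment is equivalent to the field-theoretic inclusion $E^H \cap M \subseteq L$.

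The third step invokes the maximality of $F_1$. Unwinding the definition, $x \in E^H$ iff every pair of complex conjugations of $E$ agree on $x$, equivalently, iff complex conjugation descends to a well-defined involution on $\Q(x)$; hence $E^H$ is precisely the maximal subfield of $E$ that is either totally real or CM. In particular, any subfield of $E^H$---including $E^H \cap M$---is totally real or CM, and transporting through the isomorphism $\tau_1^{-1}: M \to F$ yields a totally real or CM subfield of $F$, which by the very maximality defining $F_1$ must be contained in $F_1$. Hence $E^H \cap M \subseteq \tau_1(F_1) = L$, completing the proof. The main obstacle is the identification of $E^H$ with the maximal CM-or-totally-real subfield of $E$: this requires checking both that compositum of CM fields is CM (so that $F_1$ contains every CM subfield of $F$), and that compatibility of every pair of complex conjugations on a subfield is precisely the structural property defining a CM-type field.
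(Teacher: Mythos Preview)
Your argument is correct and follows the same strategy as the paper's: extract from strong purity the invariance of $\lambda^\tau$ under the normal subgroup generated by products (equivalently, commutators) of complex conjugations, and then conclude that $\lambda^\tau$ depends only on $\tau|_{F_1}$. The only difference is cosmetic---the paper first passes to $\C$ and cites \cite[Prop.\,26]{raghuram-hecke} for the field-theoretic step, whereas you work directly over $E$ and spell out the Galois-correspondence identification of $E^H$ with the maximal CM-or-totally-real subfield, making your version more self-contained but not substantively different.
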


For brevity, the conclusion will be denoted as $\lambda = {\rm BC}_{F/F_1}(\kappa).$

\begin{proof}
It suffices to prove the proposition over $\C$, i.e., if $'\lambda \in X^+_{00}(\Res_{F/\Q}(T_0) \times \C)$ then it suffices to show the existence 
$'\kappa \in X^+_{00}(\Res_{F_1/\Q}(T_0) \times \C)$ such that $'\lambda = {\rm BC}_{F/F_1}('\kappa);$ because, then given the $\lambda$ in the proposition,
take an embedding $\iota : E \to \C$, and let $'\lambda = {}^\iota\lambda$, to which using the statement over $\C$ one gets $'\kappa$, which defines a unique $\kappa$ 
via ${}^\iota\kappa =  {}'\kappa.$ It is clear that $\lambda = {\rm BC}_{F/F_1}(\kappa)$ because this is so after applying $\iota$. 
To prove the statement over $\C$, take $\lambda \in X^+_{00}(\Res_{F/\Q}(T_0) \times \C)$, and suppose $\lambda = (\lambda^\eta)_{\eta : F \to \C}$ with 
$\lambda^\eta = (b^\eta_1 \geq b^\eta_2 \geq \cdots \geq b^\eta_n).$ Strong-purity gives 
$$
b^{\gamma \circ \eta}_{n-j+1} + b^{\gamma \circ \bar{\eta}}_j = \w, \quad \forall \gamma \in \Gal(\bar\Q/\Q), \ \forall \eta \in \Sigma_F, \ 1 \leq j \leq n.
$$
Also, one has: 
$$
b^{\gamma \circ \eta}_{n-j+1} + b^{\overline{\gamma \circ \eta}}_j = \w, \quad \forall \gamma \in \Gal(\bar\Q/\Q), \ \forall \eta \in \Sigma_F, \ 1 \leq j \leq n.
$$
Hence, we get 
$b^{\gamma \circ \bar{\eta}}_j \ = \ b^{\overline{\gamma \circ \eta}}_j.$ This may be written as $b^{\gamma \circ \c \circ \eta}_j \ = \ b^{\c \circ \gamma \circ \eta}_j,$ where 
$\c$ is complex conjugation. Hence, as explicated in the proof of Prop.\,26 in \cite{raghuram-hecke}, one gets $b^{\gamma \circ \eta}_j = b^{\eta}_j$ for all 
$\gamma$ in the normal subgroup of $\Gal(\bar\Q/\Q)$ generated by the commutators $\{g \c g^{-1} \c : g \in \Gal(\bar\Q/\Q)\}$, and all $\eta : F \to \C.$ 
This means that $b^{\eta}_j$ depends 
only on $\eta|_{F_1}.$ 
\end{proof}

\medskip
\subsection{\bf The nonvanishing problem of cuspidal cohomology for $\GL(n)$}
\label{sec:main-prob}

The basic conjecture that any strongly-pure weight for $\GL_n$ over a number field $F$ supports cuspidal cohomology is formalized as: 

\begin{con}
\label{con:main}
Suppose $\lambda \in X^+_{00}(\Res_{F/\Q}(T_0) \times E)$ is a strongly-pure weight, then for some deep enough open-compact subgroup $K_f$ of $G(\A_f)$, 
$$
H_{\rm cusp}^\bullet(\SGK, \tM_{{}^\iota\lambda, \C}) \neq 0,
$$
for every embedding $\iota : E \to \C$.  
\end{con}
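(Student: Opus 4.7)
The plan is to leverage Proposition \ref{prop:strong-pure-weights-base-change} together with the main Theorem of this paper to reduce the conjecture to the transfer of a cuspidal cohomological representation along a possibly non-Galois extension of number fields. By Proposition \ref{prop:strong-pure-weights-base-change}, any strongly-pure $\lambda \in X^+_{00}(\Res_{F/\Q}(T_0) \times E)$ can be written as $\lambda = {\rm BC}_{F/F_1}(\kappa)$ for a strongly-pure $\kappa \in X^+_{00}(\Res_{F_1/\Q}(T_0) \times E)$. Since $F_1$ is either $F_0$ itself or a totally imaginary quadratic, hence Galois, extension of $F_0$, the Theorem applies directly on $F_1$ and yields, for some deep enough level, a cuspidal cohomological representation $\pi_1$ of $\GL_n(\A_{F_1})$ with coefficients in $\tM_{{}^\iota\kappa,\C}$. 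This provides the base case.

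For the inductive step one seeks a cuspidal cohomological $\pi$ on $\GL_n(\A_F)$ in the weight ${}^\iota\lambda$, built from $\pi_1$ by transfer along $F/F_1$. When $F$ admits a filtration $F_1 = L_0 \subset L_1 \subset \cdots \subset L_r = F$ by cyclic extensions, the inductive step is supplied by iterated Arthur--Clozel cyclic base change. At each stage the archimedean Langlands parameter is forced to be correct precisely because $\lambda$ was arranged to be the base-change of $\kappa$, so cohomological behavior is automatically preserved along the tower. Cuspidality is preserved at each stage provided no intermediate base-change becomes isobaric, equivalently that the representation at each stage is not invariant under twisting by a nontrivial character of the cyclic group at that step. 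Using the freedom in the central character afforded by strong purity, one twists $\kappa$ at the outset by a carefully chosen algebraic Hecke character of $F_1$ so that this non-invariance holds throughout the tower; after untwisting at the end, one obtains $\pi$ in the required weight.

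The serious obstacle is the case in which $F/F_1$ admits no such solvable filtration, equivalently in which the Galois closure $\tilde F$ of $F$ over $F_0$ has non-solvable Galois group. An appealing alternative is to apply the Theorem directly on $\tilde F$ to obtain a cuspidal cohomological $\tilde\pi$ on $\GL_n(\A_{\tilde F})$ with weight ${\rm BC}_{\tilde F/F_1}(\kappa)$, and then to descend $\tilde\pi$ along $\tilde F/F$ using its $\Gal(\tilde F/F)$-invariance; but descent of cuspidal automorphic representations of $\GL_n$ along a non-solvable extension is an unresolved case of Langlands functoriality, and this is where the argument genuinely stalls. A conditional route that sidesteps this obstacle would construct, from $\pi_1$ together with automorphic inductions of algebraic Hecke characters of CM subfields, a compatible system of $n$-dimensional $\ell$-adic representations of the absolute Galois group of $F$ whose Hodge--Tate data is prescribed by $\lambda$, and then invoke automorphy-lifting theorems in the style of Taylor and collaborators to realize this compatible system by a cuspidal automorphic representation of $\GL_n(\A_F)$, with the cohomological property verified through the archimedean local Langlands correspondence. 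Making this program work uniformly in $F$ and $\lambda$ is the technical hurdle separating the Theorem proved here from the full conjecture.
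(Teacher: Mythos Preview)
The statement you are trying to prove is labeled in the paper as a \emph{Conjecture} (Conj.\,\ref{con:main}), and the paper does not claim to prove it in general. What the paper actually proves is the special case in which $F/F_0$ is Galois (Thm.\,\ref{thm:main-gln}), by an entirely different route: reduction to $\SL_n$ via Prop.\,\ref{prop:gln-sln}, construction of the rational automorphism $\alpha = \Res_{F/\Q}(\vartheta)\circ\c$ using a chosen complex conjugation in $\Gal(F/F_0)$, an explicit Lefschetz-number computation at each archimedean place (Prop.\,\ref{prop:slnc-lefschetz}), and the Borel--Labesse--Schwermer criterion (Thm.\,\ref{thm:bls}). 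The base-change description of $\lambda$ in Prop.\,\ref{prop:strong-pure-weights-base-change} enters only to verify $\alpha$-invariance of the coefficient system (Lem.\,\ref{lem:alpha-M-lambda}); the paper never attempts to lift a cuspidal representation from $F_1$ to $F$.

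Your proposal is therefore not to be compared against a proof in the paper, because there is none. As a strategy toward the open conjecture it is coherent, but it is not a proof, and you say so yourself: the non-solvable step is a well-known open case of functoriality. Even in the solvable case your argument has a genuine gap. The twisting maneuver you invoke does not obviously work: to keep each cyclic base change cuspidal along a tower $F_1=L_0\subset\cdots\subset L_r=F$ you need, at stage $i$, that $\pi_{i-1}\not\simeq\pi_{i-1}\otimes\eta$ for every nontrivial character $\eta$ of $\Gal(L_i/L_{i-1})$; but $\pi_{i-1}$ is itself the base change of $\pi_1\otimes\chi$ along $L_{i-1}/F_1$, and the constraint this places on the single Hecke character $\chi$ of $F_1$ becomes increasingly entangled as $i$ grows. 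Asserting that ``the freedom in the central character'' suffices is not an argument; one must exhibit such a $\chi$ (algebraic, so that strong purity of the twisted weight is preserved) and prove it meets all $r$ conditions simultaneously. Absent that, even the solvable case of your proposal is incomplete.
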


The main theorem of this article settles the conjecture when $F$ is Galois over its maximal totally real subfield $F_0$; this includes the case when $F/\Q$ is a Galois extension.

\bigskip

\section{Cuspidal cohomology of $\SL(n)$ and $\GL(n)$}
\label{sec:gln-sln}
As a preliminary reduction step, we show using 
the main result of Labesse--Schwermer \cite{labesse-schwermer-JNT}, 
that the nonvanishing of cuspidal cohomology for $\GL(n)$ with coefficients in $\M_{{}^\iota\lambda, \C}$ 
follows from the nonvanishing of cuspidal cohomology for $\SL(n)$ with coefficients in the restriction of $\M_{{}^\iota\lambda, \C}$ to $\SL(n)$. 

\medskip
Towards this reduction step, denote 
$G_0^1= \SL_n/F$, $B_0^1 = B_0 \cap G_0^1,$ and $T_0^1 = T_0 \cap G_0^1$. Let $G^1,$ $B^1,$ and $T^1$ be the Weil restriction of scalars from $F$ to $\mathbb{Q}$ of 
$G_0^1,$ $B_0^1,$ and $T_0^1,$ respectively. Then, $G^1(\R) \simeq \prod_{v \in S_r} \SL_n(\R) \times \prod_{v \in S_c} \SL_n(\C)$; its Lie algebra is denoted $\g_\infty^1,$
and its maximal compact subgroup is 
$C_\infty^1 \simeq \prod_{v \in S_r} \SO(n) \times \prod_{v \in S_c} \SU(n).$ 
Suppose $\lambda \in X^*(\Res_{F/\Q}(T_0) \times E)$ and $\iota : E \to \C$ is an embedding. Let ${}^\iota\lambda^1$ denote
the restriction of ${}^\iota\lambda$ to a character of $T^1$. Since the $\iota$-conjugate of the restriction of $\lambda$ to $T^1$ is the same as restriction to $T^1$ of the $\iota$-conjugate of 
$\lambda$, the notation ${}^\iota\lambda^1$ will not cause any confusion. 
For an open-compact subgroup $K_f^1$ of $G^1(\A_f)$, the definitions for the locally symmetric space $\cS^{G^1}_{K_f^1} = G^1(\Q)\backslash G^1(\A)/C_\infty^1K_f^1$, 
the sheaf $\tM_{{}^\iota\lambda^1,\C}$ of $\C$-vector spaces on $S^{G^1}_{K_f^1}$, its cohomology 
groups $H^\bullet(S^{G^1}_{K_f^1}, \tM_{{}^\iota\lambda^1, \C})$, and cuspidal cohomology $H_{\rm cusp}^\bullet(S^{G^1}_{K_f^1}, \tM_{{}^\iota\lambda^1, \C})$ works just the same
for $\SL(n)$ as for $\GL(n)$ in Sect.\,\ref{sec:loc-symm-space} through Sect.\,\ref{sec:cuspidal-coh}.  
The compact-mod-center groups that we divide by in the definition of the locally symmetric spaces $\SGK$ and $\cS^{G^1}_{K_f^1}$ are related as: 
$$
K_\infty^\circ \ = \ C_\infty^\circ \cdot S(\R)^\circ \ = \  C_\infty^1 \cdot (S(\R)^\circ \cdot \prod_{v \in S_c} Z_{\U(n)}),
$$
where $Z_{\U(n)}$ is the center of $\U(n)$ the maximal compact subgroup of $\GL_n(F_v)$ for $v \in S_c$. 
The following proposition is a mild generalization of one part of the main theorem of Labesse--Schwermer \cite[Thm.\,1.1.1]{labesse-schwermer-JNT} applied to 
$\SL(n)$ and $\GL(n)$.

\medskip

\begin{prop}
\label{prop:gln-sln-variation}
Suppose $\sigma$ is a unitary cuspidal automorphic representation of $G^1(\A) = \SL_n(\A_F)$. Then there is 
a unitary cuspidal automorphic representation $\pi$ of $G(\A) = \GL_n(\A_F)$ such that 
\begin{enumerate}
\item[(i)] $\sigma$ appears in the restriction of $\pi$, and 
\item[(ii)] the central character $\omega$ of $\pi$ is trivial on $S(\R)^\circ \prod_{v \in S_c} Z_{\U(n)}$. 
\end{enumerate}
\end{prop}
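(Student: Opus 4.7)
The plan is to combine Labesse--Schwermer's lifting theorem with a central-character-twisting argument. First I apply Theorem 1.1.1 of \cite{labesse-schwermer-JNT} to the given unitary cuspidal representation $\sigma$ of $\SL_n(\A_F)$ to produce a unitary cuspidal automorphic representation $\pi^\prime$ of $\GL_n(\A_F)$ such that $\sigma$ appears as an irreducible direct summand in $\pi^\prime|_{\SL_n(\A_F)}$; this establishes the candidate for condition $(i)$. Write $\omega^\prime$ for the (unitary) central character of $\pi^\prime$, viewed as a Hecke character of $F$.

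\textbf{The twisting step.} For any Hecke character $\chi$ of $F$, the twist $\pi := \pi^\prime \otimes (\chi\circ\det)$ is again unitary cuspidal; since $\chi\circ\det$ is trivial on $\SL_n(\A_F)$, the restriction $\pi|_{\SL_n(\A_F)}$ coincides with $\pi^\prime|_{\SL_n(\A_F)}$, so $(i)$ is preserved. The central character of $\pi$ is $\omega^\prime \cdot \chi^n$. Hence the proposition reduces to producing a Hecke character $\chi$ such that $\omega^\prime \chi^n$ is trivial on the connected subgroup
$$
\mathcal{Z} \ := \ S(\R)^\circ \prod_{v \in S_c} Z_{\U(n)}.
$$
Decomposing $\mathcal{Z}$ into its factors: on $S(\R)^\circ = \R_{>0}$, unitary characters have the form $t \mapsto t^{it_0}$ and always admit $n$-th roots; on each $Z_{\U(n)} \cong S^1$ at a complex place $v$, characters are $e^{i\theta} \mapsto e^{ik_v\theta}$ with $k_v \in \Z$, and restrictions of Hecke characters realize exactly the integer weights. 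The existence of $\chi$ with prescribed archimedean components then follows by weak approximation, provided the divisibility constraints at complex places are met.

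\textbf{Main obstacle.} The crucial point is the divisibility by $n$ of the integer weight $k_v(\omega^\prime)$ at each complex place: since Hecke characters raised to the $n$-th power have weights in $n\Z$, the restriction $\omega^\prime|_{Z_{\U(n)}}$ must already factor through an $n$-th power. The residue $k_v(\omega^\prime) \bmod n$ is an invariant of $\sigma$, equal to the value of the $\SL_n$-central character $\omega_\sigma$ on the roots of unity $\mu_n(F_v) \subset Z_{\U(n)}$, and cannot be adjusted by further twisting. Fortunately, in the cohomological context in which this proposition is invoked, $\sigma$ has nontrivial $C_\infty^1$-invariant vectors, and since $\mu_n(F_v) \subset \SU(n) \subset C_\infty^1$ at each complex place, $\omega_\sigma|_{\mu_n(F_v)} = 1$ holds automatically, so the obstruction vanishes and the construction succeeds.
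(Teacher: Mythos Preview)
There is a genuine gap in your resolution of the $\mu_n$-obstruction. Your assertion that a cohomological $\sigma$ ``has nontrivial $C_\infty^1$-invariant vectors'' is false: the cohomological unitary representations of $\SL_n(\C)$ are the unitary principal series $\J^1_{\blambda}$ described in \ref{par:unitary-rep}, and by Lem.\,\ref{lem:canonical-K-type-coh} the unique $K^1=\SU(n)$-type supporting their cohomology has highest weight $2\bfgreek{rho}$, not the trivial weight. A direct computation from \eqref{eqn:unitary-character} gives the central character of $\J^1_{\blambda}$ on $\zeta I_n \in \mu_n$ as $\zeta^{-2(b_1+\cdots+b_n)}$, which is nontrivial for instance when $n=3$ and $\lambda=(1,0,0)$. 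So the obstruction you correctly isolated does \emph{not} vanish, and since twisting by $\chi\circ\det$ alters the central character only by an $n$-th power, your method cannot in general kill $\omega'|_{\mu_n}\subset\omega'|_{Z_{\U(n)}}$. Separately, the proposition is stated for arbitrary unitary cuspidal $\sigma$, so even a correct cohomological argument would not prove it as written.

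The paper takes a different route that sidesteps the $n$-th-power bottleneck: rather than first lifting to some $\pi'$ and then twisting, it re-enters the Labesse--Schwermer construction and chooses the global central character $\omega$ directly as an extension of the intermediate character $\omega_1$ living on $\SL_n(F)\backslash Z_1$, where $Z_1=(\GL_n(F)Z_0(\A_F))\cap\SL_n(\A_F)$. The freedom at that stage is an arbitrary extension of a character from a closed subgroup of $Z_0(F)\backslash Z_0(\A_F)$, not merely multiplication by $n$-th powers of Hecke characters; the paper argues that the subgroup $Z_1$ meets $S(\R)^\circ\prod_{v\in S_c}Z_{\U(n)}$ trivially, so one may first declare $\omega$ trivial on the latter and then extend. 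That extra flexibility is exactly what your post-hoc twisting approach lacks.
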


\begin{proof}
That there exists a unitary cuspidal $\pi$ whose restriction to $\SL_n(\A_F)$ contains $\sigma$ is contained in \cite[Thm.\,1.1.1]{labesse-schwermer-JNT}. The only additional detail is that we may 
arrange for the condition on the central character $\pi$; this follows from the proof of their theorem--see Sect.\,5.1 and 5.2 of {\it loc.\,cit.} We adumbrate the details below. 
Consider 
$$
L^2_{\rm cusp}(\SL_n(F)\backslash \SL_n(\A_F), \omega_0),
$$
where $\omega_0 = \omega_\sigma$ is the central character of $\sigma$; it is a character on the center of $\SL_n(\A_F)$--that this center is disconnected will no affect the discussion. Let 
$W_{\sigma}$ be the isotypic component of $\sigma$ in the above space of cuspforms. Let 
$$
Z_1 \ := \ (\GL_n(F) Z_0(\A_F)) \cap \SL_n(\A_F),
$$
where, recall that $Z_0$ is the center of $\GL_n/F$. Decompose $W_\sigma$ according to the action of the abelian group $\SL_n(F)\backslash Z_1$; let $\omega_1$ be a character of $\SL_n(F)\backslash Z_1$ that occurs and consider the space $W_\sigma(\omega_1)$ cut out by this character. Observe that
\begin{equation}
\label{eqn:inclusion_centers}
\SL_n(F)\backslash Z_1 \ \subset \ \GL_n(F)\backslash (\GL_n(F) Z_0(\A_F)) \ \simeq \ Z_0(F)\backslash Z_0(\A_F).
\end{equation}
Let $\omega$ be a character of $Z_0(F)\backslash Z_0(\A_F)$ that extends $\omega_1$ on $\SL_n(F)\backslash Z_1.$ Then Labesse and Schwermer prove that 
$$
L^2_{\rm cusp}(\SL_n(F)\backslash \SL_n(\A_F), \omega_1) \ \simeq \ 
L^2_{\rm cusp}(\GL_n(F)\backslash H^+, \omega),
$$
where $H^+ = \GL_n(F) Z_0(\A_F)\SL_n(\A_F)$, and that 
$$
L^2_{\rm cusp}(\GL_n(F)\backslash \GL_n(\A_F), \omega) \ \simeq \
\Ind_{H^+}^{ \GL_n(\A_F)} \left(L^2_{\rm cusp}(\GL_n(F)\backslash H^+, \omega)\right). 
$$
A Mackey type argument permits them to find $\pi$ in $L^2_{\rm cusp}(\GL_n(F)\backslash \GL_n(\A_F), \omega)$ extending the original $\sigma$, and of course the central 
character of $\pi$ is $\omega$. The proposition is an exercise in choosing an $\omega$ using \eqref{eqn:inclusion_centers}. Since
$$
Z_1 \cap \left(S(\R)^\circ \prod_{v \in S_c} Z_{\U(n)}\right) \ = \
(\GL_n(F) Z_0(\A_F)) \cap \SL_n(\A_F) \cap \left(S(\R)^\circ \prod_{v \in S_c} Z_{\U(n)}\right) \ = \{1\},
$$
we can first extend $\omega_1$ to a character $\omega_2$ on $Z_2 := Z_1 \cdot (S(\R)^\circ \prod_{v \in S_c} Z_{\U(n)})$ by making it trivial on $S(\R)^\circ (\prod_{v \in S_c} Z_{\U(n)});$ 
next, via classical Mackey theory, extend $\omega_2$ to a character $\omega$ on  $Z_0(F)\backslash Z_0(\A_F),$ and then run through the rest of the argument of Labesse and Schwermer. 
\end{proof}

For an automorphic representation $\pi$ of $\GL_n(\A_F)$, and any real number 
$t$, let $\pi(t)$ stand for $\pi \otimes |\!| \ |\!|_F^t$, where $|\!| \ |\!|_F$ is the adelic norm on $\A_F^\times$ which is a product of local normalized valuations; a similar notation is also adopted for local representations. The main result of this section is the following result:

\begin{prop}
\label{prop:gln-sln}
Suppose $\lambda \in X^+_{00}(\Res_{F/\Q}(T_0) \times E)$ with purity weight $\w$, 
and $\iota : E \to \C$ is an embedding. Suppose 
$H_{\rm cusp}^\bullet(S^{G^1}_{K_f^1}, \tM_{{}^\iota\lambda^1, \C}) \neq 0,$ 
for some open-compact subgroup $K_f^1$ of $G^1(\A_f)$ and suppose 
$\sigma$ is a unitary cuspidal automorphic representation of $G^1(\A) = \SL_n(\A_F)$ which contributes to $H_{\rm cusp}^\bullet(S^{G^1}_{K_f^1}, \tM_{{}^\iota\lambda^1, \C}).$ 
Suppose $\pi$ is a unitary cuspidal automorphic representation of $G(\A) = \GL_n(\A_F)$ such that $\sigma$ appears in the restriction of $\pi$ (\cite[Thm.\,1.1.1]{labesse-schwermer-JNT})
and satisfying the central character condition as in Prop.\,\ref{prop:gln-sln-variation}.  

Then, $\pi(-\w/2)$ contributes nontrivially to $H_{\rm cusp}^\bullet(\SGK, \tM_{{}^\iota\lambda, \C}) \neq 0,$ 
for some open-compact subgroup $K_f$ of $G(\A_f)$. (Here, $K_f^1$ need not be equal to $K_f \cap G^1(\A_f)$.)
\end{prop}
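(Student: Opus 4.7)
By the cuspidal cohomology decomposition \eqref{eqn:cuspidal-coh-spectrum}, the claim reduces to establishing the archimedean nonvanishing
\[
H^\bullet(\fg_\infty, K_\infty^\circ; \pi_\infty(-\w/2) \otimes \M_{{}^\iota\lambda, \C}) \ \neq \ 0,
\]
together with the existence of a deep enough open-compact $K_f$ with $\pi_f(-\w/2)^{K_f} \neq 0$; the latter is immediate from smoothness of $\pi_f$. The plan is to compare the $(\fg_\infty, K_\infty^\circ)$-cohomology of $\pi_\infty(-\w/2) \otimes \M_{{}^\iota\lambda, \C}$ with the hypothesized nonvanishing $(\fg^1_\infty, C_\infty^1)$-cohomology of $\sigma_\infty \otimes \M_{{}^\iota\lambda^1, \C}$, using the decomposition $K_\infty^\circ = C_\infty^1 \cdot A$ with $A := S(\R)^\circ \prod_{v \in S_c} Z_{\U(n)}$ to peel off the central factor.

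First I would note that the twisting character $\|\cdot\|_F^{-\w/2}$ is trivial on $\SL_n(\A_F)$, so the restrictions of $\pi$ and $\pi(-\w/2)$ to $G^1(\A)$ coincide, and $\sigma$ therefore appears in the restriction of $\pi(-\w/2)$ as well. Next I would verify that $A$ acts trivially on $\pi_\infty(-\w/2) \otimes \M_{{}^\iota\lambda, \C}$. Strong-purity forces $d^\tau = \w/2$ for every $\tau$, giving $\omega_\lambda(t) = t^{n[F:\Q]\w/2}$ for $t \in S(\R)^\circ$, while the twist contributes $\|tI_n\|_F^{-\w/2} = t^{-n[F:\Q]\w/2}$; combined with the triviality of $\omega_\pi$ on $S(\R)^\circ$ from Prop.\,\ref{prop:gln-sln-variation}, this yields $\omega_{\pi(-\w/2)}|_{S(\R)^\circ} = \omega_\lambda^{-1}|_{S(\R)^\circ}$. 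Similarly, strong-purity gives $d^{\tau_v} = d^{\bar\tau_v}$ at each complex place $v$, so $Z_{\U(n)}$ acts trivially on $\M_{{}^\iota\lambda, \C}$; the twist $\|\cdot\|_F^{-\w/2}$ is trivial on $Z_{\U(n)}$ (scalars of modulus one), and $\omega_\pi$ is trivial on $\prod_{v \in S_c} Z_{\U(n)}$ by hypothesis.

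Since $A$ is a connected central subgroup of $K_\infty^\circ$ acting trivially on $\pi_\infty(-\w/2) \otimes \M_{{}^\iota\lambda, \C}$, and the Lie algebras decompose as $\fg_\infty/\fk_\infty \cong \fg^1_\infty/\Lie(C_\infty^1) \oplus \z_\infty/\Lie(A)$, a Künneth-type reduction of the relative Lie algebra complex yields an identification
\[
H^\bullet(\fg_\infty, K_\infty^\circ; \pi_\infty(-\w/2) \otimes \M_{{}^\iota\lambda, \C}) \ \cong \ H^\bullet\bigl(\fg^1_\infty, C_\infty^1; \pi_\infty(-\w/2)|_{G^1(\R)} \otimes \M_{{}^\iota\lambda^1, \C}\bigr) \otimes \Lambda^\bullet\bigl((\z_\infty/\Lie(A))^*\bigr).
\]
As the restriction $\pi_\infty(-\w/2)|_{G^1(\R)}$ is a finite direct sum of irreducible unitary $\SL_n(F_\infty)$-modules containing $\sigma_\infty$ as a summand, the first tensor factor on the right contains the nonzero summand $H^\bullet(\fg^1_\infty, C_\infty^1; \sigma_\infty \otimes \M_{{}^\iota\lambda^1, \C})$, which is nonzero by the hypothesis on $\sigma$; the exterior algebra factor is always nonzero in degree zero, giving the desired archimedean nonvanishing.

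The main obstacle I anticipate is making the Künneth-type identification above genuinely precise: tracking the finite overlap $C_\infty^1 \cap A$ (the $n$-th roots of unity sitting as scalar matrices), ensuring that the relative Lie algebra cohomology behaves well under this finite quotient, and verifying that $\pi_\infty(-\w/2)|_{G^1(\R)}$ decomposes as a finite direct sum on which $(\fg^1_\infty, C_\infty^1)$-cohomology is additive compatibly with the inclusion $\sigma_\infty \hookrightarrow \pi_\infty(-\w/2)|_{G^1(\R)}$. Once this reduction is set up, the proposition concludes by selecting any $K_f$ small enough to ensure $\pi_f(-\w/2)^{K_f} \neq 0$.
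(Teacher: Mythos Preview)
Your proposal is correct and follows essentially the same route as the paper: reduce to the archimedean statement, write $K_\infty^\circ = C_\infty^1 \cdot A$ with $A = S(\R)^\circ \prod_{v \in S_c} Z_{\U(n)}$, verify via purity ($d^\eta = \w/2$) and the central-character condition of Prop.\,\ref{prop:gln-sln-variation} that $A$ acts trivially on $\pi_\infty(-\w/2) \otimes \M_{{}^\iota\lambda,\C}$, split off the central factor, and compare with the $(\fg_\infty^1, C_\infty^1)$-cohomology of $\pi_\infty^1 \otimes \M_{{}^\iota\lambda^1,\C}$, which contains the nonzero cohomology of $\sigma_\infty$. The paper makes your ``K\"unneth-type reduction'' explicit as an isomorphism of complexes
\[
\Hom_{K_\infty^\circ}(\wedge^\bullet(\fg_\infty/\k_\infty),\,\pi_\infty(-\w/2)\otimes\M_{{}^\iota\lambda,\C}) \ \cong \ \Hom_{C_\infty^1}(\wedge^\bullet(\fg_\infty^1/\c_\infty^1),\,\pi_\infty^1\otimes\M_{{}^\iota\lambda^1,\C}) \otimes \wedge^\bullet(\z_\infty/\s_\infty),
\]
by proving both inclusions directly; this dissolves your finite-overlap worry, since once $A$ acts trivially on the target, $C_\infty^1$-equivariance and $K_\infty^\circ$-equivariance of a map coincide, and no quotient by $C_\infty^1 \cap A$ is ever taken.
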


\begin{proof}
The archimedean component $\sigma_\infty$ of the cuspidal $\sigma$ of $G^1(\A) = \SL_n(\A_F)$ satisfies: 
\begin{equation}
\label{eq:sln-coh}
 H^\bullet(\fg_\infty^1, C_\infty^1; \sigma_\infty \otimes \M_{{}^\iota\lambda^1, \C}) \neq 0.
\end{equation}
Given a unitary cuspidal automorphic representation $\pi$ of $\GL_n(\A_F)$ whose restriction contains $\sigma$ as in as in Prop.\,\ref{prop:gln-sln-variation}, 
 it suffices to show that \eqref{eq:sln-coh} implies the following nonvanishing: 
\begin{equation}
\label{eq:gln-coh}
 H^\bullet(\fg_\infty, K_\infty^0; \pi_\infty(-\w/2) \otimes \M_{{}^\iota\lambda, \C}) \neq 0.
\end{equation}
The embedding $\iota$ is fixed for the entire proof, and for brevity it will be dropped; in effect ${}^\iota\lambda$ is replaced simply by $\lambda$; this abuse of notation will cause 
no confusion. 

\medskip

From \eqref{eq:sln-coh}, applying the K\"unneth theorem for relative Lie algebra cohomology, for each $v \in S_\infty$, one has: 
\begin{equation}
\label{eq:sln-coh-local}
 H^\bullet(\fg_v^1, C_v^1; \sigma_v \otimes \M_{\lambda^1_v, \C}) \neq 0.
\end{equation}
For $v \in S_r$, $\M_{\lambda^1_v, \C}$ is the restriction to $\SL_n(\C)$ of the irreducible representation $\M_{\lambda^{\eta_v}, \C}$ of $\GL_n(\C)$ of highest weight $\lambda^{\eta_v}$; 
however, for $v \in S_c$, $\M_{\lambda^1_v, \C}$ is the restriction to $\SL_n(\C)$ of the representation $\M_{\lambda^{\eta_v}, \C} \otimes \M_{\lambda^{\overline{\eta_v}}, \C}$ 
of $\GL_n(\C)$, where we remind ourselves that an element of $\GL_n(\C)$ acts on the second factor after taking its complex conjugate. 

\medskip

Let $\pi_v^1$ denote the restriction of $\pi_v$ to $\SL_n(F_v)$. 
If $v \in S_c$ or if $n$ is odd then $\pi_v^1$ is irreducible and hence $\pi_v^1 =\sigma_v$. 
If $v \in S_r$ and $n$ is even then $\pi_v^1$ is either irreducible or a direct sum $\pi_v^+ \oplus \pi_v^-$ of two inequivalent irreducible 
representations; the non-trivial element of $\rO(n)/\SO(n)$ conjugates $\pi_v^+$ to $\pi_v^-,$ and 
induces an isomorphism between $H^\bullet(\fg^1_v, C_v^1; \pi_v^+ \otimes \M_{{}^\iota\lambda^1_v, \C})$ and 
$H^\bullet(\fg^1_v, C_v^1; \pi_v^- \otimes \M_{{}^\iota\lambda^1_v, \C});$ hence, the dimension of 
$H^\bullet(\fg^1_v, C_v^1; \pi^1_v \otimes \M_{{}^\iota\lambda^1_v, \C})$ is twice that of $H^\bullet(\fg^1_v, C_v^1; \sigma_v \otimes \M_{{}^\iota\lambda^1_v, \C}).$ 
 From the K\"unneth theorem, we get 
\begin{equation}
\label{eq:gln_coh-1}
H^\bullet(\fg_\infty^1, C_\infty^1; \pi_\infty^1 \otimes \M_{{}^\iota\lambda^1, \C}) \neq 0.
\end{equation}
This cohomology, by definition, is the cohomology of the complex
$$
\Hom_{C_\infty^1} \left(\wedge^\bullet(\fg_\infty^1/\c_\infty^1), \pi_\infty^1 \otimes \M_{{}^\iota\lambda^1, \C}\right). 
$$
Now as a $K_\infty^\circ$-module, via the Adjoint-action, we have
$$
\fg_\infty/\k_\infty \ = \ (\fg_\infty^1/\c_\infty^1) \oplus (\z_\infty/\s_\infty),
$$
where its action on $\z_\infty/\s_\infty$ is trivial. Taking the exterior algebra on both sides one has:
$$
\wedge^\bullet(\fg_\infty/\k_\infty) \ = \  \wedge^\bullet(\fg_\infty^1/\c_\infty^1) \otimes \wedge^\bullet(\z_\infty/\s_\infty). 
$$
We claim that this induces an isomorphism of the complexes:
\begin{multline}
\label{eq:iso-complex}
\Hom_{K_\infty^\circ} \left(\wedge^\bullet(\fg_\infty/\k_\infty), \pi_\infty(-\w/2) \otimes \M_{{}^\iota\lambda, \C}\right) \ \simeq \\ 
\Hom_{C_\infty^1} \left(\wedge^\bullet(\fg_\infty^1/\c_\infty^1), \pi_\infty^1 \otimes \M_{{}^\iota\lambda^1, \C}\right) \otimes 
\wedge^\bullet(\z_\infty/\s_\infty). 
\end{multline}
Taking cohomology on both sides of \eqref{eq:iso-complex}, using \eqref{eq:gln_coh-1}, we deduce \eqref{eq:gln-coh}.

\medskip

It suffices to justify the claim in \eqref{eq:iso-complex}. Since 
$K_\infty^\circ = C_\infty^1 \cdot (S(\R)^\circ \cdot \prod_{v \in S_c} Z_{\U(n)}),$ an operator in the left hand side of 
\eqref{eq:iso-complex}, which, by definition is $K_\infty^\circ$-equivariant, is also $C_\infty^1$-equivariant. As a $C_\infty^1$-module,  
$\pi_\infty(-\w/2) \otimes \M_{{}^\iota\lambda, \C}$ is the same as $\pi_\infty^1 \otimes \M_{{}^\iota\lambda^1, \C}$. Whence, there is a 
natural inclusion of the left hand side of \eqref{eq:iso-complex} into the right hand side. For the converse, suppose 
$T \in \Hom_{C_\infty^1} \left(\wedge^\bullet(\fg_\infty/\k_\infty), \pi_\infty^1 \otimes \M_{{}^\iota\lambda^1, \C}\right)$, then  
$K_\infty^\circ$-equivariance of $T$ follows if we show that, furthermore, $T$ is equivariant for $S(\R)^\circ \cdot \prod_{v \in S_c} Z_{\U(n)}$. 
The adjoint-action of 
$S(\R)^\circ \cdot \prod_{v \in S_c} Z_{\U(n)}$ on $(\fg_\infty/\k_\infty)$ is trivial, and hence also trivial on $\wedge^\bullet(\fg_\infty/\k_\infty)$. 
It suffices then to show that $S(\R)^\circ \cdot \prod_{v \in S_c} Z_{\U(n)}$ acts trivially on $\pi_\infty(-\w/2) \otimes \M_{{}^\iota\lambda, \C}.$
Recall from Sect.\,\ref{sec:strong-pure-C} the purity of $\lambda$; if $\lambda = (\lambda^{\eta})_{\eta : F \to \C}$ and 
$\lambda^{\eta} = \sum_{i = 1}^{n-1} (a^\eta_i - 1) \bfgreek{gamma}_i + d^\eta \cdot \bfgreek{delta}$, then purity implies $d^\eta = \w/2$ for all $\eta$; permitting us to write 
$\lambda^\eta = \lambda^{\eta, 1} + \w/2 \cdot \det.$ For $v \in S_c$, let ${\sf z}_v = {\rm diag}(z_v, \dots, z_v) \in Z_{\U(n)}$ for some 
$z_v \in \C^\times$ with $|z_v|_v = z_v \bar{z}_v = 1;$ let ${\sf t} = {\rm diag}(t, \dots, t)  \in S(\R)^\circ$; 
the action of element 
${\sf t} \cdot ({\sf z}_v)_{v \in S_c}  \ \in \ S(\R)^\circ \cdot \prod_{v \in S_c} Z_{\U(n)}$
on $\pi_\infty(-\w/2) \otimes \M_{{}^\iota\lambda, \C}$ is via the scalar 
\begin{multline*}
 \left(\omega_{\pi}({\sf t}) \cdot |\!| t |\!|_\infty^{-n\w/2 + \sum_{\eta: F \to \C} nd^{\eta}} \right) \cdot
\prod_{v \in S_c} \left(\omega_{\pi_v}(z_v) \cdot (z_v \bar{z}_v)^{-n\w/2} \cdot (z_v^{n d^\eta} \bar{z}_v^{n d^{\bar\eta}})\right) \\ 
\ = \ \omega_{\pi}({\sf t} \cdot ({\sf z}_v)_{v \in S_c} ) \ = \ 1, 
\end{multline*}
because $d^\eta = \w/2$ for all $\eta$ giving the first equality, and the second equality follows from the condition imposed on the central character as in Prop.\,\ref{prop:gln-sln-variation}. This gives
the reverse inclusion \eqref{eq:iso-complex}, completing the proof.
\end{proof}

\medskip

\section{A result of Borel, Labesse, and Schwermer}
\label{sec:BLS}

In this section we review the main theorem of Borel--Labesse--Schwermer \cite[Thm.\,10.4]{borel-labesse-schwermer} which will be used in the proof of Thm.\,\ref{thm:main-sln} in the 
following section.

\subsection{Review of cuspidal cohomology as in \cite{borel-labesse-schwermer}}
Just for this section, following the context as set up in \cite{borel-labesse-schwermer}, suppose $G$ is an almost absolutely-simple connected algebraic group over a number field $F$ of strictly positive $F$-rank. Let $(\phi, \M)$ be a finite-dimensional irreducible complex representation of $G_\infty$. For a finite set of places  $S$ of $F$, let $G_S = \prod_{v \in S} G(F_v)$. 
Suppose now that $S$ contains all the archimedean places; write $S = S_\infty \cup S_f$; extend $\M$ to a representation of $G_S$ by making it trivial on $G_{S_f}.$ 
Then $\M = \otimes_{v \in S} \M_v$, with $M_v$ an irreducible representation of $G(F_v)$; it is the trivial representation for $v \in S_f$. 
Let $\Gamma$ be an $S$-arithmetic subgroup of $G(F)$. The cuspidal cohomology of $\Gamma$ with coefficients in $\M$ is defined as
$$
H^\bullet_{\rm cusp}(\Gamma, \M) \ := \ H_d^\bullet(G_S, L^2_{\rm cusp}(\Gamma \backslash G_S)^\infty \otimes \M),
$$
where $H_d^\bullet(G_S,-)$ is differentiable cohomology; see Borel--Wallach \cite[Chap.\,IX]{borel-wallach}. To get information about nonvanishing of cuspidal cohomology 
for deep enough $S$-arithmetic subgroups, one may pass to the limit to define:
$$
H^\bullet_{\rm cusp}(G, S; \M)
 \ := \ 
\lim_{\longrightarrow_\Gamma} H^\bullet_{\rm cusp}(\Gamma, \M),
$$
which is isomorphic to the differential cohomology of adelic cuspidal spectrum: 
$$
H^\bullet_{\rm cusp}(G, S; \M) \ = \  
H_d^\bullet(G_S, L^2_{\rm cusp}(G(F) \backslash G(\A_F))^\infty \otimes \M).
$$

\medskip

The van Est theorem (see, for example, \cite[Cor.\,IX.5.6, (ii)]{borel-wallach}) says that for any differentiable $G_\infty$-module $V$, if $K_\infty$ is a maximal compact subgroup of $G_\infty,$
then the differentiable cohomology of $V$ is the same as its relative Lie algebra cohomology, i.e., $H^\bullet_d(G_\infty, V) = H^\bullet(\g_\infty, K_\infty, V).$ In particular, taking 
$S = S_\infty$, we see that 
$$
H^\bullet_{\rm cusp}(G, S_\infty; \M) \ = \  
H^\bullet(\g_\infty, K_\infty, L^2_{\rm cusp}(G(F) \backslash G(\A_F))^\infty \otimes \M),
$$
which ties up with our previous definition of cuspidal cohomology $H_{\rm cusp}^\bullet(G, \M)$ as in \eqref{eqn:def-cusp-coh-no-Kf} after adapting the context therein which was $\GL_n/F$ to the $G/F$ in this section.

\medskip
\subsection{Lefschetz number at infinity of a rational automorphism}
This subsection is adapted from \cite[Sect.\,8.1]{borel-labesse-schwermer}. 
Let $\alpha$ be an automorphism of the algebraic group $G/F$ of finite order. Define $\tilde{L} = G \rtimes \langle\alpha\rangle$, where $\langle\alpha\rangle$ is a finite cyclic group considered 
as an $F$-group with every element being $F$-rational. Let $L$ denote the coset defined by $\alpha.$ For any $F$-algebra $A$, 
denote by $L(A)^+$ the group generated by $L(A).$ 

\medskip

At any place $v$ of $F$, suppose $\M$ is a finite-dimensional representation of $L(F_v)^+$ which is taken to be trivial if $v$ is a finite place. 
Let $(\pi,H_{\pi})$ be an irreducible admissible representation of $L(F_v)^+$. It is helpful to keep in mind that an irreducible representation $\pi$ of $L(F_v)^+$ is the same as an irreducible representation $\pi_0$ of $G(F_v)$ that is $\alpha$-invariant: ${}^\alpha\pi_0 \simeq \pi_0$, a choice of an equivariant map determines an extension of $\pi_0$ on $G(F_v)$ to $\pi$ on $L(F_v)^+$. 
Let $\beta\in L(F_v).$ By an abuse of notation, let $\beta$ also denote the 
automorphism it induces on $H^*_d(G(F_v), H_{\pi}\otimes \mathcal{M})$. The Lefschetz number of $\beta$ with respect to $H_{\pi}\otimes \mathcal{M}$ is defined as 
$$
\Lef(\beta,G(F_v),H_{\pi}\otimes \mathcal{M}) \ := \ \sum_m (-1)^m \, {\rm Trace}\left(\beta: H^m_d(G(F_v), H_{\pi}\otimes \mathcal{M})\right).
$$ 
Since $G(F_v)$ acts trivially on the differentiable cohomology groups, the Lefschetz number $\Lef(\beta,G(F_v),H_{\pi}\otimes \mathcal{M})$ is independent of the choice of 
element representing $\beta \in L(F_v)$, and we may and shall denote it by $\Lef(\alpha,G(F_v),H_{\pi}\otimes \mathcal{M})$. 

\medskip

Let $S$ be a finite set of places containing all the archimedean places. Let $\mathcal{M}_{v}$ and $H_{\pi, v}$ be representations of $G(F_v)$ as above. Then 
$H_{\pi}=\otimes_{v \in S} H_{\pi,v}$ and $\mathcal{M} =\otimes_{v \in S} \mathcal{M}_{v}$ are representations of $G_S$. Define the $S$-local Lefschetz number of $\alpha$ with respect to $H_{\pi}\otimes \mathcal{M}$ as: 
$$
\Lef(\alpha,G_S,H_{\pi}\otimes \mathcal{M}) \ := \ \prod_{v \in S} \Lef(\alpha, G(F_v),H_{\pi,v}\otimes \mathcal{M}_v).
$$
The $S_\infty$-local Lefschetz number of $\alpha$ is called the Lefschetz number of $\alpha$ at infinity.

\medskip
\subsection{A criterion for nonvanishing of cuspidal cohomology}

\begin{thm}[Thm.\,10.4 of Borel--Labesse--Schwermer \cite{borel-labesse-schwermer}]
\label{thm:bls}
Let $G$ be an almost absolutely-simple connected algebraic group over a number field $F$ of strictly positive $F$-rank. 
Assume that the Lefschetz number of $\alpha$ at infinity as a function
\[
\pi_\infty \ \mapsto \ \Lef(\alpha, G_\infty; \, H_{\pi_\infty} \otimes \M), 
\]
with $\pi_\infty$ varying through the set of equivalence classes of irreducible unitary representations of the Lie group $G_\infty,$ is not identically zero. Then for a finite set 
$S$ of places containing all archimedean places and at least one finite place, the 
cuspidal cohomology $H^\bullet_{\rm cusp}(G, S; \M)$ does not vanish. 
\end{thm}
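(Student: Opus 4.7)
The plan is to establish the theorem by comparing the two sides of a suitable Lefschetz-trace formula twisted by $\alpha$: on the spectral side the cuspidal part factorizes through the archimedean local Lefschetz numbers $\Lef(\alpha, G_\infty, H_{\pi_\infty} \otimes \M)$, while on the geometric side the same global quantity is expressed as a sum of orbital integrals at $\alpha$-twisted fixed points of an $S$-arithmetic group action. The archimedean hypothesis will be fed into the spectral side, while nonvanishing of the geometric side, produced by a judicious choice of a Hecke test function at the finite place in $S$, forces a cuspidal contribution.

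Concretely, for a sufficiently small $S$-arithmetic $\Gamma \subset G(F)$ stabilized by $\alpha$ and a Hecke test function $f$ on the $\Q$-points of $G$ at finite places in $S \setminus S_\infty$, define
\[
\Lef(\alpha, \Gamma, f; \M) \ := \ \sum_i (-1)^i \, \tr\bigl(\alpha \cdot f \,\bigm|\, H^i(\Gamma, \M)\bigr),
\]
and its cuspidal analogue $\Lef_{\rm cusp}(\alpha, \Gamma, f; \M)$ on $H^\bullet_{\rm cusp}(\Gamma, \M)$. The van~Est theorem and the decomposition of $L^2_{\rm cusp}$ into cuspidal automorphic representations give the spectral factorization
\[
\Lef_{\rm cusp}(\alpha, \Gamma, f; \M) \ = \ \sum_{\pi} m(\pi)\, \Lef(\alpha, G_\infty, H_{\pi_\infty} \otimes \M)\, \tr\bigl(\pi_f(f)\bigr),
\]
the sum running over cuspidal automorphic $\pi$ whose archimedean component is $\alpha$-stable. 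By linear independence of characters of admissible representations at the finite places (Howe's conjecture/local finiteness), once one produces a single pair $(\Gamma, f)$ for which $\Lef_{\rm cusp} \neq 0$, one obtains a cuspidal $\pi$ in the sum with nonzero archimedean Lefschetz factor, which immediately yields the nonvanishing of $H^\bullet_{\rm cusp}(G,S;\M)$ via \eqref{eqn:def-cusp-coh-no-Kf}.

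The geometric side is obtained by a Rohlfs-style fixed-point formula: $\Lef(\alpha, \Gamma, f; \M)$ is expanded as a sum of local orbital integrals over $\alpha$-twisted conjugacy classes in the coset $\Gamma \cdot \alpha$. Choosing $f$ at a finite place $v_0 \in S \setminus S_\infty$ (which exists by hypothesis on $S$) to be sharply supported near the identity, the identity orbital integral can be arranged to be nonzero while most other orbital contributions are cut off. The principal obstacle is to argue that the Eisenstein and residual contributions to the difference $\Lef - \Lef_{\rm cusp}$ either vanish or are controllable under the chosen support condition on $f$: one invokes the positive $F$-rank hypothesis to parametrize the non-cuspidal spectrum via Langlands' theory of Eisenstein series along every proper $F$-parabolic, and then uses constant-term/truncation estimates at $v_0$ to kill these boundary contributions. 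Assembling everything, the geometric nonvanishing together with the archimedean hypothesis $\Lef(\alpha, G_\infty, H_{\pi_\infty^0} \otimes \M) \neq 0$ for some $\pi_\infty^0$ forces $\Lef_{\rm cusp}(\alpha, \Gamma, f; \M) \neq 0$ for some deep enough $\Gamma$, yielding $H^\bullet_{\rm cusp}(G, S; \M) \neq 0$ as desired.
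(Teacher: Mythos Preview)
The paper does not prove this theorem; it is stated as a quotation of Theorem~10.4 of \cite{borel-labesse-schwermer} and used as a black box. The only discussion following the statement is \eqref{eqn:S-cuspidal-implies-cuspidal}, which records a consequence of the theorem (passage from $S$-cuspidal to $S_\infty$-cuspidal cohomology via the Steinberg representation at finite places), not a proof of it. There is therefore no proof in the paper to compare your proposal against.

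As for your sketch on its own terms: the overall architecture (twisted Lefschetz formula, spectral factorization over places, geometric fixed-point side \`a la Rohlfs) is indeed that of \cite{borel-labesse-schwermer}, but two points are off. The test function at the finite place $v_0$ is not ``sharply supported near the identity''; in \cite{borel-labesse-schwermer} one takes an Euler--Poincar\'e function (essentially a pseudo-coefficient of the Steinberg representation), chosen precisely so that on the spectral side it annihilates every local component except those with nonzero differentiable cohomology. This, together with the analogous Euler--Poincar\'e choice at infinity, is what collapses the spectral side to the cohomological spectrum and disposes of the non-cuspidal contributions --- not ad hoc constant-term truncation estimates. Second, your final paragraph does not close the logical loop: nonvanishing of the geometric side of the \emph{full} Lefschetz number together with the mere existence of some $\pi_\infty^0$ with nonzero archimedean factor does not by itself force $\Lef_{\rm cusp}\neq 0$. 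One needs the Euler--Poincar\'e choices at all places in $S$ to reduce the spectral side to a finite sum over $\alpha$-stable cuspidal $\pi$ weighted by the local Lefschetz numbers, and only then does the archimedean hypothesis bite. As written, your outline has the right flavor but misidentifies the key mechanism.
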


Suppose $S = S_\infty \cup S_f$ as before, and $r_f = \sum_{v \in S_f} r_v$, the sum being over $v \in S_f$ of the $F_v$-rank $r_v$ of $G/F_v$, then by Thm.\,6.5 of \cite{borel-labesse-schwermer} one has
an injection:
\begin{equation}
\label{eqn:S-cuspidal-implies-cuspidal}
H^\bullet_d(G_S, L^2_{\rm cusp}(G(F) \backslash G(\A_F))^\infty \otimes \M)[-r_f] \ \subset \ 
H^\bullet_d(G_\infty, L^2_{\rm cusp}(G(F) \backslash G(\A_F))^\infty \otimes \M).
\end{equation}
This is due to the fact that for finite places $v \in S_f$, $\M_v$ is the trivial representation, and the only representation of $G(F_v)$ that has nontrivial cohomology (with respect to 
the trivial representation) is the Steinberg representation of $G(F_v)$ which has nontrivial cohomology only in degree $r_v.$ The theorem then implies that 
$H^\bullet_{\rm cusp}(G, S_\infty; \M) \neq 0.$ 



\medskip
\section{The main theorem and its proof}
\label{sec:main_theorem}

\medskip
\subsection{Statement of the main theorem on nonvanishing of cuspidal cohomology}
\label{sec:main_theorem_statement}

\begin{thm}
\label{thm:main-gln}
Let $F$ be a number field. Let $F_0$ be the maximal totally real subfield of $F$.  
Assume that $F/F_0$ is a Galois extension. (This is automatically satisfied if we assume that $F/\Q$ is a Galois extension.) 
Let $G_0 = \GL_n/F$, $T_0$ a maximal torus in $G_0$, and all other notations as in Sect.\,\ref{sec:prelims-gln} and Sect.\,\ref{sec:pure}. 
Suppose $\lambda \in X^+_{00}(\Res_{F/\Q}(T_0) \times E)$ is a strongly-pure weight, then we have nonvanishing of cuspidal cohomology for some level structure, i.e., 
for some deep enough open-compact subgroup $K_f$ of $G(\A_f)$ we have 
$$H_{\rm cusp}^\bullet(\SGK, \tM_{{}^\iota\lambda, \C}) \neq 0$$ 
for every embedding $\iota : E \to \C$.  
\end{thm}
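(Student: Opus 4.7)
The plan is to combine the $\GL$-to-$\SL$ reduction of Section \ref{sec:gln-sln}, the internal structural restriction on $\lambda$ coming from Proposition \ref{prop:strong-pure-weights-base-change}, and the Lefschetz-number criterion of Borel--Labesse--Schwermer (Theorem \ref{thm:bls}). Fix an embedding $\iota : E \to \C$. By Proposition \ref{prop:gln-sln}, it suffices to produce a unitary cuspidal automorphic representation $\sigma$ of $\SL_n(\A_F)$ contributing to $H_{\rm cusp}^\bullet(\cS^{G^1}_{K_f^1}, \tM_{{}^\iota\lambda^1,\C})$ for some open-compact subgroup $K_f^1 \subset G^1(\A_f)$, where $\lambda^1$ is the restriction of $\lambda$ to the $\SL_n$-torus; the required central character condition on the $\GL_n$-lift is guaranteed by Proposition \ref{prop:gln-sln-variation}.

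To exhibit $\sigma$, I would apply Theorem \ref{thm:bls} to $G = \SL_n/F$, which is absolutely almost simple of positive $F$-rank. It is then enough to find an $F$-automorphism $\alpha$ of $G$ of finite order together with an irreducible unitary representation $\pi_\infty$ of $G_\infty = \prod_{v \in S_\infty} \SL_n(F_v)$ such that
$$
\Lef\bigl(\alpha,\, G_\infty;\, H_{\pi_\infty} \otimes \M_{{}^\iota\lambda^1,\C}\bigr) \ \neq \ 0.
$$
By K\"unneth this Lefschetz number factors as $\prod_{v \in S_\infty} \Lef(\alpha_v,\SL_n(F_v); H_{\pi_v}\otimes \M_{{}^\iota\lambda^1_v,\C})$. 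At each place $v$ I would take $\pi_v$ to be a Vogan--Zuckerman cohomological representation $A_{\mathfrak{q}_v}(\lambda^1_v)$ attached to a $\theta$-stable parabolic subalgebra $\mathfrak{q}_v$ adapted to the weight $\lambda^1_v$, and evaluate the local Lefschetz number by tracking the action of $\alpha_v$ on the canonical $K_v$-type supporting cohomology, as identified in \cite{vogan-zuckerman}.

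The hypothesis that $F/F_0$ is Galois is what makes this local-to-global computation tractable once combined with Proposition \ref{prop:strong-pure-weights-base-change}. Since $F_1/F_0$ is either trivial or quadratic, hence Galois, the extension $F/F_1$ is Galois as well, and $\Gamma_1 := \Gal(F/F_1)$ permutes the archimedean places of $F$ above each fixed place of $F_1$ in a way compatible with the base-change equalities $\lambda^\tau = \kappa^{\tau|_{F_1}}$. This Galois symmetry lets one transport a successful choice of $\theta$-stable parabolic and canonical $K$-type at one archimedean place to every place in its $\Gamma_1$-orbit, so that a single archimedean Lefschetz calculation yields a nonzero factor at every $v \in S_\infty$ simultaneously.

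The main obstacle is the local Lefschetz computation at a complex place. Since $\SL_n(\C)$, viewed as a real Lie group, has no compact Cartan for $n \geq 2$, cohomological representations are not discrete series and their $(\mathfrak{g},K)$-cohomology is spread across a range of degrees, generically producing cancellation in the alternating sum defining the Lefschetz number. The purpose of the non-trivial $\alpha$---to be built from the outer involution of $\SL_n$ together with inner automorphisms compatible with the Galois structure---is to break this cancellation at the level of the canonical Vogan--Zuckerman $K$-types. Arranging this to hold uniformly across all complex places of $F$, by exploiting the $F/F_0$ Galois hypothesis to propagate one local computation to an entire Galois orbit, is the technical heart of the argument.
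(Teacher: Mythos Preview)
Your overall architecture matches the paper's: reduce from $\GL_n$ to $\SL_n$ via Prop.~\ref{prop:gln-sln}, then invoke Thm.~\ref{thm:bls} by exhibiting an automorphism $\alpha$ and a unitary $\pi_\infty$ with nonvanishing archimedean Lefschetz number. The gap is in your choice of $\alpha$. You propose the outer involution $\vartheta(g)={}^tg^{-1}$ together with \emph{inner} automorphisms, and you insist on $\alpha$ being an $F$-automorphism of $\SL_n/F$. But any such $\alpha$ acts holomorphically on $\SL_n(F_v)\simeq\SL_n(\C)$ at a complex place; it is never the Cartan involution and does not act by $-1$ on $\p^1$. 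The paper instead takes $\alpha=\Res_{F/\Q}(\vartheta)\circ\c$ with $\c\in\Gal(F/F_0)$ complex conjugation; at each complex place this induces the Cartan involution $\theta(g)={}^t\bar g^{-1}$, which acts by $-1$ on $\p^1$. The local Lefschetz number for the unitary principal series $\J^1_{\blambda_v}$ (your $A_{\q}(\lambda)$ with $\q$ the Borel) is then $c_{\chi_0}\,2^{n-1}\neq 0$ by Prop.~\ref{prop:slnc-lefschetz}: the sign $(-1)^q$ coming from $\theta$ on $\wedge^q\p^1_c$ cancels the $(-1)^q$ in the alternating sum, turning it into $\sum_q\binom{n-1}{q-n(n-1)/2}=2^{n-1}$.

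This also pinpoints how the Galois hypothesis is actually used, and it is not the way you describe. It is not used to transport a single local calculation across a $\Gal(F/F_1)$-orbit of places; the local computation is identical at every complex place and is done once in \S\ref{sec:slnc}. Rather, $F/F_0$ Galois is precisely what guarantees that complex conjugation $\c$ lies in $\Gal(F/F_0)$ and hence defines a \emph{rational} automorphism of $\Res_{F/\Q}(\SL_n)$; so one must apply Thm.~\ref{thm:bls} with the group viewed over $\Q$ (or $F_0$), not over $F$ as you set it up, since $\c$ is not $F$-rational. Strong purity enters not through Prop.~\ref{prop:strong-pure-weights-base-change} as you suggest, but directly via Prop.~\ref{prop:strong-pure-E}(iii) in Lem.~\ref{lem:alpha-M-lambda} to verify ${}^\alpha\rho_{{}^\iota\lambda^1}\simeq\rho_{{}^\iota\lambda^1}$, so that the Lefschetz number is even defined.
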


\medskip

If $F$ is totally real, i.e., $F = F_0$, then this nonvanishing of cuspidal cohomology was proved in Bhagwat--Raghuram \cite{bhagwat-raghuram-BIMS} by using Arthur's classification and transferring from a suitable cuspidal representation of an appropriate classical group to $G$ whose choice depends on the parity of $n$ and the parity of the purity weight $\w$ of the highest weight $\lambda.$ If $F$ itself is a CM-field, then as in \cite{bhagwat-raghuram-BIMS}, in many cases, one has nonvanishing of cuspidal cohomology by transferring from a suitable unitary group. However, the proof of Thm.\,\ref{thm:main-gln} below does not appeal to \cite{bhagwat-raghuram-BIMS} and gives a new proof of nonvanishing of cuspidal cohomology even when $F$ is a totally real field or a CM field. 
If $F$ is not totally real, the hypothesis $F/F_0$ is Galois does not imply that $F$ is totally imaginary; for example, $F_0$ a real quadratic extension and take an $a \in F_0$ which 
is neither totally positive nor totally negative and put $F = F_0(\sqrt{a}).$ Also, if $F$ is totally imaginary, it is worth emphasizing that $F$ need not be a CM field; in fact, $F$ need not even contain a CM-subfield. 

\medskip

To prove the theorem, after appealing to the reduction step in Prop.\,\ref{prop:gln-sln}, it suffices to prove the following theorem on nonvanishing of cuspidal cohomology for 
$\SL_n/F$: 

\medskip

\begin{thm}
\label{thm:main-sln}
Let $F$ be a number field which is Galois over its maximal totally real subfield $F_0$. 
Let $G_0^1 = \SL_n/F$ and $G^1 = \Res_{F/\Q}(G_0^1)$; the rest of the notations are as in Sect.\,\ref{sec:gln-sln}. 
Suppose $\lambda \in X^+_{00}(\Res_{F/\Q}(T_0) \times E)$ is a strongly-pure weight, then 
for some open-compact subgroup $K_f^1$ of $G^1(\A_f)$ we have $$H_{\rm cusp}^\bullet(S^{G^1}_{K_f^1}, \tM_{{}^\iota\lambda^1, \C}) \neq 0$$ 
for every embedding $\iota : E \to \C$.  
\end{thm}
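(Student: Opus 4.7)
The plan is to apply the Borel--Labesse--Schwermer criterion (Thm.\,\ref{thm:bls}) to $G_0^1=\SL_n/F$ with the $\Q$-rational involution induced by complex conjugation $c \in \Gal(F/F_0)$. Exhibiting a single Vogan--Zuckerman cohomological representation $\pi_\infty$ at infinity whose Lefschetz number against $c$ is nonzero will yield nonvanishing of cuspidal cohomology at some deep enough level.

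\textbf{Set-up of weight and automorphism.} Fix once and for all an embedding $\iota : E \to \C$ and drop it from the notation; strong purity ensures the arguments are uniform in $\iota$. By Prop.\,\ref{prop:strong-pure-weights-base-change}, the strongly-pure $\lambda$ is base-changed from $F_1$, so each component $\lambda^\tau$ depends only on $\tau|_{F_1}$; this internal constraint will be used to match representation-theoretic data across the $c$-orbits on the embeddings $F\to\C$. Since $F/F_0$ is Galois with $F_0$ totally real and $F$ totally imaginary, complex conjugation (with respect to a chosen infinite embedding of $F$) is an element $c\in\Gal(F/F_0)$ of order two; it induces a $\Q$-rational involution $\alpha$ of $G^1=\Res_{F/\Q}(\SL_n)$. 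In the BLS framework with $\tilde L = G^1 \rtimes \langle\alpha\rangle$, Thm.\,\ref{thm:bls} reduces the problem to producing an irreducible unitary $(\fg_\infty^1, C_\infty^1)$-module $\pi_\infty$ with $\Lef(\alpha, G_\infty^1;\, H_{\pi_\infty}\otimes \M_{\lambda^1,\C})\neq 0$.

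\textbf{Main step: Vogan--Zuckerman representation and Lefschetz computation.} By Vogan--Zuckerman, the irreducible unitary $(\fg_\infty^1, C_\infty^1)$-cohomological representations with coefficients $\M_{\lambda^1,\C}$ are exhausted by modules $A_{\mathfrak{q}}(\lambda^1)$ indexed by $C_\infty^1$-conjugacy classes of $\theta$-stable parabolic subalgebras $\mathfrak{q}\subset \fg_\infty^1\otimes\C$. I select a specific $\mathfrak{q}$---the canonical one to be pinned down in Prop.\,\ref{prop:slnc-lefschetz}---that is stable under $\Ad(\alpha)$; the property that $\lambda^\tau$ depends only on $\tau|_{F_1}$ provides the compatibility of the highest-weight data across the $c$-orbits that makes an $\alpha$-stable choice of $\mathfrak{q}$ possible. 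The chosen $A_{\mathfrak{q}}(\lambda^1)$ is then extended to a representation of $G_\infty^1 \rtimes \langle\alpha\rangle$ by an equivariant intertwiner on the canonical minimal $K$-type. The archimedean Lefschetz number now factors via the K\"unneth formula into local contributions indexed by the $c$-orbits on $S_c$: at each $c$-fixed complex place one gets a local Lefschetz number on $\SL_n(\C)$ for the outer involution $g\mapsto \bar g$, which is nonzero by the explicit Vogan--Zuckerman description of the canonical $K$-types and the action of conjugation on $\wedge^\bullet(\fg_v^1/\mathfrak{c}_v^1)$; at each pair of $c$-swapped places the trace of the swap on a two-factor K\"unneth component collapses to a positive dimension. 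The base-change property from $F_1$ is finally invoked to verify that the local signs align to give a nonzero product. This signed-dimension computation is the chief obstacle.

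\textbf{Conclusion.} With the archimedean Lefschetz number nonzero, Thm.\,\ref{thm:bls} yields $H_{\rm cusp}^\bullet(G_0^1, S;\, \M_{\lambda^1,\C})\neq 0$ for some finite set $S\supset S_\infty$ containing at least one finite place, and the injection \eqref{eqn:S-cuspidal-implies-cuspidal} transfers this to nonvanishing of the purely archimedean cuspidal cohomology. Unwinding the inductive limit over level structures produces a deep enough open-compact $K_f^1\subset G^1(\A_f)$ with $H_{\rm cusp}^\bullet(S^{G^1}_{K_f^1}, \tM_{{}^\iota\lambda^1,\C})\neq 0$, and since $\iota$ was arbitrary the theorem follows.
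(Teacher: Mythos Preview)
Your overall strategy---apply the Borel--Labesse--Schwermer criterion via a rational automorphism and exhibit a cohomological representation with nonzero archimedean Lefschetz number---matches the paper. But your choice of automorphism is wrong, and this is not a cosmetic issue.

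You take $\alpha$ to be the involution induced by complex conjugation $c\in\Gal(F/F_0)$ alone. The paper instead uses
\[
\alpha \ = \ \Res_{F/\Q}(\vartheta)\circ c, \qquad \vartheta(g)={}^t g^{-1}.
\]
The reason is that strong purity does \emph{not} say $\lambda^{\bar\eta}=\lambda^{\eta}$ on $T_0^1$; it says $\lambda^{\bar\eta}\big|_{T_0^1}=-w_0(\lambda^{\eta})\big|_{T_0^1}$, the \emph{dual} weight. Hence under $c$ alone one finds ${}^{c}\rho_{\lambda^1}\simeq\rho_{(\lambda^{\vee})^1}$, which is not isomorphic to $\rho_{\lambda^1}$ unless every $\lambda^\eta$ is self-dual. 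Composing with $\vartheta$ (which realizes contragredient) is exactly what converts the dual back and yields ${}^{\alpha}\M_{\lambda^1}\simeq\M_{\lambda^1}$; this is the content of Lem.~\ref{lem:alpha-M-lambda}. Without $\alpha$-invariance of the coefficient system you cannot even form the extended module on $G^1\rtimes\langle\alpha\rangle$, so the BLS machinery does not start.

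This error propagates to your local picture. At a complex place you write the induced involution as $g\mapsto\bar g$, whose fixed-point group is the noncompact $\SL_n(\R)$; there is no reason the canonical $K$-type argument you sketch would produce a nonzero Lefschetz number there. With the correct $\alpha$, the local automorphism is the Cartan involution $\theta(g)={}^t\bar g^{-1}$ with fixed points $\SU(n)$; the paper then takes the tempered unitary principal series $\J^1_{\blambda}=\Ind_{B^1}^{G^1}(\chi^1_{\blambda})$ (not a general $A_{\mathfrak q}(\lambda)$), identifies the unique $K^1$-type $\pi_{\chi_0}$ with $\chi_0=2\bfgreek{rho}$ supporting cohomology (Lem.~\ref{lem:canonical-K-type-coh}), and computes $\Lef(\theta,\SL_n(\C),\J^1_{\blambda}\otimes\M^1_{\blambda})=c_{\chi_0}\,2^{n-1}\neq 0$ (Prop.~\ref{prop:slnc-lefschetz}). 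Your discussion of ``$c$-fixed'' versus ``$c$-swapped'' places and the claim that swap-traces ``collapse to a positive dimension'' is too vague to be an argument, and in any case rests on the incorrect $\alpha$.

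In short: insert the outer automorphism $\vartheta$ into your $\alpha$, replace $g\mapsto\bar g$ by the Cartan involution at each complex place, and carry out the explicit principal-series/$2\bfgreek{rho}$ computation rather than invoking $A_{\mathfrak q}(\lambda)$ abstractly.
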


\medskip
\subsection{Proof of Thm.\,\ref{thm:main-sln}}
To begin, the local computations are put together in \ref{sec:slnc} for $\SL_n(\C)$  and in \ref{sec:slnr} for $\SL_n(\R)$; 
the key technical ingredients of the nonvanishing of archimedean Lefschetz numbers are 
contained in Prop.\,\ref{prop:slnc-lefschetz} and Prop.\,\ref{prop:slnr-lefschetz}. 
Then the global proof in \ref{sec:slnF} is an application of Thm.\,\ref{thm:bls}.

\medskip
\subsubsection{\bf Lefschetz number for $\SL_n(\C)$}
\label{sec:slnc}
The main result of this paragraph is contained in Prop.\,\ref{prop:slnc-lefschetz}. Its proof uses Vogan--Zuckerman \cite{vogan-zuckerman} on the inner structure of cohomology. 
The entire discussion being local, just for this paragraph, the notations are 
adapted to $\SL_n(\C)$.

\medskip
\paragraph{\it Notations}
\label{par:notations}
Let $G = \GL_n(\C)$, and $G^1 = \SL_n(\C)$. Let $K^1 = \SU(n)$ be the standard maximal compact subgroup of $G^1$. Let $\theta(g)= {}^t \bar{g}^{-1}$ for $g \in G$; it is an automorphism of order $2$, whose fixed point set within $G^1$ is $K^1$. Let $\g = \gl_n(\C)$ (resp., $\g^1 = \sl_n(\C)$) be the Lie algebra of the Lie group $G$ (resp., $G^1$) on which the action of $\theta$, which we continue to denote $\theta$, is via $\theta(X) = - {}^t \!\bar{X}$ for $X \in \g.$ On $\g^1$, the $+1$ eigenspace of $\theta$ is $\k^1$, the Lie algebra of $K^1,$ 
consisting of all skew-hermitian matrices in $\sl_n(\C)$, and the $-1$ eigenspace of $\theta$, denoted $\p^1$, consists of all hermitian matrices; one has $\g^1 = \k^1 \oplus \p^1,$ a decomposition 
of real Lie algebras.  
Denote by $\g^1_c$, $\k^1_c$, and $\p^1_c$ the complexifications of $\g^1$, $\k^1,$ and $\p^1,$ respectively. Identify $\g^1_c/\k^1_c$ with $\p^1_c$. 
Let $B$ (resp., $B^1$) denote the Borel subgroup of upper-triangular
matrices in $G$ (resp., $G^1$), with Levi decomposition $B = T \cdot N$ (resp., $B^1 = T^1 \cdot N$) where $T$ (resp., $T^1$) is the subgroup of diagonal matrices in $G$ (resp., $G^1$), 
and $N$ the subgroup of upper-triangular unipotent matrices in $G$.
Then ${}^\circ T^1 := K^1 \cap T^1$ is the maximal torus of the compact Lie group $K^1$ consisting of all diagonal matrices 
${\rm diag}(z_1, z_2, \dots, z_n),$ with  $z_1,\dots, z_n \in S^1,$ the circle group in $\C^*$, and
$\prod_{j=1}^n z_j = 1.$ We have the Langlands decomposition $B^1 = {}^\circ T^1 \cdot A^1 \cdot N,$ where $A^1$ consists of all diagonal matrices ${\rm diag}(a_1, a_2,\dots, a_n),$ with $a_1,\dots, a_n \in \R_{> 0},$ and 
$\prod_{j=1}^n a_j = 1.$ The Lie algebras over $\R$ of the real Lie groups 
$B^1,$ ${}^\circ T^1,$ $A^1,$ and $N$ are denoted $\b^1$, ${}^\circ\t^1$, $\a^1,$ and $\n$, respectively, and for their complexification we add the subscript $c$. Over $\R$ one has 
$$
\k^1 \ = \ {}^\circ\t^1  \oplus 
\left( \bigoplus_{1 \leq i < j \leq n} \R \cdot (E_{ij} - E_{ji}) \right) \oplus 
 \left( \bigoplus_{1 \leq i < j \leq n} \R \cdot \sqrt{-1}(E_{ij} + E_{ji}) \right),
$$
where $E_{ij}$ is the matrix with $1$ in the $(i,j)$-th entry and $0$ elsewhere; similarly, 
$$
\p^1 \ = \ \a^1  \oplus 
\left( \bigoplus_{1 \leq i < j \leq n} \R \cdot (E_{ij} + E_{ji}) \right) \oplus 
 \left( \bigoplus_{1 \leq i < j \leq n} \R \cdot \sqrt{-1}(E_{ij} - E_{ji}) \right). 
$$
After complexifying, and diagonalizing the adjoint action of ${}^\circ\t_c$, one has the decomposition:
$$
\k^1_c \ = \ {}^\circ\t^1_c \oplus \left( \bigoplus_{\alpha \in \Phi^+} \C \cdot X_\alpha^k \right) \oplus  \left( \bigoplus_{\beta \in \Phi^-} \C \cdot Y_\beta^k \right),
$$
where, $\Phi^+$ is the set of positive roots identified with pairs of indices $1 \leq i < j \leq n$ in the usual way, $\Phi^- = -\Phi^+$ is the set of negative roots, 
and as elements of $\k^1_c = \k^1 \otimes \C$, one has: 
$$
X_{ij}^k \ = \ (E_{ij} - E_{ji}) \otimes \sqrt{-1}  \ +  \ \sqrt{-1}(E_{ij} + E_{ji}) \otimes 1, 
$$
$$
Y_{ij}^k \ = \ (E_{ij} - E_{ji}) \otimes \sqrt{-1}  \ -  \ \sqrt{-1}(E_{ij} + E_{ji}) \otimes 1.  
$$
Similarly, 
$$
\p^1_c \ = \ \a^1_c \oplus \left( \bigoplus_{\alpha \in \Phi^+} \C \cdot X_\alpha^p \right) \oplus  \left( \bigoplus_{\beta \in \Phi^-} \C \cdot Y_\beta^p \right),
$$
where, as elements of $\p^1_c = \p^1 \otimes \C$, one has: 
$$
X_{ij}^p \ = \ (E_{ij} + E_{ji}) \otimes \sqrt{-1}  \ +  \ \sqrt{-1}(E_{ij} - E_{ji}) \otimes 1, 
$$
$$
Y_{ij}^p \ = \ (E_{ij} + E_{ji}) \otimes \sqrt{-1}  \ -  \ \sqrt{-1}(E_{ij} - E_{ji}) \otimes 1.  
$$
The above decomposition will also be written as 
$$
\p^1_c = \a^1_c \oplus \u_c, 
$$ 
with $\u_c = \left(\oplus_{\alpha \in \Phi^+} \C \cdot X_\alpha^p \right) \oplus  \left( \oplus_{\beta \in \Phi^-} \C \cdot Y_\beta^p \right).$
Let $\bfgreek{rho} = \frac12 \sum_{\alpha \in \Phi^+} \alpha$ denote half the sum of all positive roots.

\medskip
\paragraph{\it Characters of the compact torus in $\wedge^\bullet \p_c$}

\begin{lemma}
\label{lem:characters-wedge-u}
Let $\gamma$ be a character of ${}^\circ T$ that appears in $\wedge^q \p^1_c$. Then there exist subsets $A$ and $B$ of  $\Phi^+$, with $|A| + |B| \leq q$, and written additively, 
$\gamma = \sum_{\alpha \in A} \alpha - \sum_{\beta \in B} \beta.$
If $r = q-|A|-|B|$, then this character is realized on any vector of the form 
$
(H_{i_1} \wedge \cdots \wedge H_{i_r}) \wedge (\bigwedge_{\alpha \in A} X_\alpha^p ) \wedge (\bigwedge_{\beta \in B} Y_\beta^p),
$
where $\{H_{i_1},\dots,H_{i_r}\}$ is a subset of a basis $\{H_1, \dots , H_{n-1}\}$ of ${}^\circ \a^1_c$. Furthermore, for the usual partial ordering on weights, one has
$- 2\bfgreek{rho} \leq \gamma \leq 2\bfgreek{rho}.$
\end{lemma}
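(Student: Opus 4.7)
The plan is to read off the ${}^\circ T^1$-weights directly from the explicit decomposition of $\p^1_c$ established just above in the preliminaries, and propagate these through the functorial construction of the exterior algebra.

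First I would verify that ${}^\circ T^1$ acts trivially on $\a^1_c$ and that under ${\rm Ad}({}^\circ T^1)$ the vector $X_\alpha^p$ (resp., $Y_\beta^p$) is a weight vector of weight $\alpha$ (resp., $\beta$). The claim on $\a^1_c$ is immediate since $\a^1$ consists of diagonal matrices and so commutes with ${}^\circ T^1$. For the root vectors, the computation is direct: with $z = {\rm diag}(e^{\sqrt{-1}\theta_1},\ldots,e^{\sqrt{-1}\theta_n}) \in {}^\circ T^1$, the entry $E_{ij}$ gets scaled by $e^{\sqrt{-1}(\theta_i-\theta_j)}$ under ${\rm Ad}(z)$; expanding ${\rm Ad}(z)(X_{ij}^p)$ and ${\rm Ad}(z)(Y_{ij}^p)$ and regrouping the four resulting summands back into the basis $\{X_{ij}^p, Y_{ij}^p\}$, using the complex structure of $\g^1_c$ coming from the $\sqrt{-1}$ in the tensor factor, shows that $X_{ij}^p$ is multiplied by $e^{\sqrt{-1}(\theta_i-\theta_j)}$ and $Y_{ij}^p$ by $e^{-\sqrt{-1}(\theta_i-\theta_j)}$. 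Hence the weights are $\alpha_{ij}$ and $-\alpha_{ij}$, respectively.

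Given these weights, a basis of $\wedge^q\p^1_c$ is indexed by triples consisting of a size-$r$ subset of a fixed basis $\{H_1,\ldots,H_{n-1}\}$ of $\a^1_c$ together with a size-$s$ subset $A \subset \Phi^+$ and a size-$t$ subset $B \subset \Phi^+$ (the latter tracking the chosen $Y$'s via $\beta \leftrightarrow -\beta$), with $r+s+t=q$. Such a wedge is a ${}^\circ T^1$-weight vector of weight precisely $\sum_{\alpha \in A}\alpha - \sum_{\beta \in B}\beta$, which is the asserted form with $r = q-|A|-|B| \geq 0$, and it is realized on exactly the vector displayed in the statement.

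For the bounds, a straightforward rewriting gives
$$
2\bfgreek{rho} - \gamma \ = \ \sum_{\alpha \in \Phi^+\setminus A}\alpha \; + \; \sum_{\beta \in B}\beta,
\qquad
\gamma + 2\bfgreek{rho} \ = \ \sum_{\alpha \in A}\alpha \; + \; \sum_{\alpha \in \Phi^+\setminus B}\alpha,
$$
each a non-negative integer combination of positive roots, so $-2\bfgreek{rho} \leq \gamma \leq 2\bfgreek{rho}$ in the usual dominance order on weights. There is essentially no obstacle in this lemma: once the eigenvalue computation for $X_\alpha^p$ and $Y_\beta^p$ has been carried out, the rest is just bookkeeping in the exterior algebra together with two lines comparing $\gamma$ with $\pm 2\bfgreek{rho}$.
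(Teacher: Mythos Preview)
Your proof is correct and is precisely the kind of direct weight computation the authors have in mind: the paper omits the proof entirely, stating only that ``the proof of the above lemma is an easy exercise that is left to the reader.'' Your verification that $X_{ij}^p$ and $Y_{ij}^p$ are ${}^\circ T^1$-weight vectors of weights $\alpha_{ij}$ and $-\alpha_{ij}$, followed by the bookkeeping in $\wedge^q\p^1_c$ and the two-line comparison with $\pm 2\bfgreek{rho}$, is exactly the intended exercise.
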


The proof of the above lemma is an easy exercise that is left to the reader.

\medskip
\paragraph{\it Pure weight and finite-dimensional representation}
\label{par:pure-weight}
By a pure weight one means $\blambda = (\lambda, \lambda^*)$, where 
$\lambda = (b_1,\dots, b_n),$ 
$\lambda^* = (b_1^*,\dots, b_n^*),$ 
with $b_1 \geq \cdots \geq b_n$  and $b_1^* \geq \cdots \geq b_n^*$ are non-increasing tuples of integers satisfying the purity condition that there exists 
an integer $\w$ such that $b_j^* + b_{n-j+1} = \w.$ If $w_0$ denotes the Weyl group element of longest length in $G$, then the purity condition can also be written as 
\begin{equation}
\label{eqn:purity-lambda}
\lambda^* \ = \ -w_0(\lambda) + \w \cdot \bfgreek{delta}_n.
\end{equation}
(See \ref{sec:characters-of-torus}.) Recall that $(\rho_\lambda, \M_\lambda)$ is the irreducible 
representation of $G$ on the complex vector space $\M_\lambda$ 
with highest weight $\lambda$. It follows that $\M_{\lambda^*} = \M_\lambda^\v \otimes {\rm det}^\w,$ where $\M_\lambda^\v$ is the representation space of the contragredient 
$\rho_\lambda^\v$ of $\rho_\lambda.$ The highest weight of $(\rho_\lambda^\v, \M_\lambda^\v)$ is $-w_0(\lambda)$, i.e., $\rho_\lambda^\v \simeq  \rho_{w_0(\lambda)}.$ 
The automorphism $g \mapsto {}^t\! g^{-1}$ takes an irreducible representation of $\GL_n(\C)$ to its contragredient. These statements on the contragredient are 
expressed symbolically as: 
\begin{equation}
\label{eqn:contragredient-equivalences}
\rho_\lambda(\, {}^t\!g^{-1}) \ \simeq \ \rho_\lambda^\v(g) \ \simeq \ \rho_{-w_0(\lambda)}(g).
\end{equation}
The character $\blambda$ is an algebraic character of the torus $T$, thought of as a real group, that maps ${\rm diag}(z_1, z_2, \dots, z_n) \in T,$ with  $z_1,\dots, z_n \in \C^*,$ to $\prod_{j=1}^n z_j^{b_j} \bar{z}_j^{b_j^*};$ given $\blambda$ put 
$$
\M_{\blambda} = \M_\lambda \otimes \M_{\lambda^*},
$$
on which an element $g \in G$ acts via 
$\rho_{\blambda}(g) = \rho_\lambda(g) \otimes \rho_{\lambda^*}(\bar{g}).$ From \eqref{eqn:purity-lambda} and \eqref{eqn:contragredient-equivalences} we get 
\begin{equation}
\label{eqn:rho-blambda-g}
\rho_{\blambda}(g) \ = \ \rho_\lambda(g) \otimes \rho_\lambda(\theta(g)) \otimes \det(\bar{g})^\w.
\end{equation}
The $\theta$-conjugate of $(\rho_{\blambda}, \M_{\blambda})$ is the representation $({}^\theta\!\rho_{\blambda}, {}^\theta\!\M_{\blambda})$ defined as 
$$
{}^\theta\!\M_{\blambda} = \M_{\blambda}, \quad {}^\theta\!\rho_{\blambda}(g) = \rho_{\blambda}(\theta(g)).
$$
The restriction of the representation $(\rho_\lambda, \M_\lambda)$ of $G$ to $G^1$ is denoted $(\rho_\lambda^1, \M_\lambda^1)$, where of course $\M_\lambda^1 = \M_\lambda$ and 
$\rho_\lambda^1(g) = \rho_\lambda(g)$ for all $g \in G^1.$ Similarly, $\rho_{\blambda}^1(g) = \rho_{(\lambda, \lambda^*)}^1(g) = \rho_{\lambda}^1(g) \otimes \rho_{\lambda^*}^1(\bar{g})$ for all 
$g \in G^1.$  From \eqref{eqn:rho-blambda-g} it follows that ${}^\theta\!\rho_{(\lambda, \lambda^*)}^1 = \rho_{(\lambda^*, \lambda)}^1$ from which one has: 

\begin{lemma}
\label{lem:I_0}
There exists an isomorphism $I_0 : \M_{\blambda}^1 \to \M_{\blambda}^1$ with the intertwining property: 
$$
I_0 \circ \rho_{\blambda}^1(g) \ = \ \rho_{\blambda}^1(\theta(g)) \circ I_0, \quad \forall g \in G^1.
$$
\end{lemma}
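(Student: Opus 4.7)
The plan is to leverage formula \eqref{eqn:rho-blambda-g} to reduce the problem on $G^1 = \SL_n(\C)$ to constructing a self-intertwiner on a twofold tensor product of $\rho_\lambda^1$, where the obvious flip of tensor factors will do the job.

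First, restrict \eqref{eqn:rho-blambda-g} to $G^1$: since $\det|_{\SL_n} \equiv 1$, the twist $\det(\bar g)^\w$ disappears, giving an equivariant isomorphism $\rho_\blambda^1(g) \simeq \rho_\lambda^1(g) \otimes \rho_\lambda^1(\theta(g))$ acting on $\M_\lambda^1 \otimes \M_\lambda^1$. Substituting $\theta(g)$ for $g$ and using $\theta^2 = \mathrm{Id}$,
$${}^\theta\rho_\blambda^1(g) \ = \ \rho_\blambda^1(\theta(g)) \ \simeq \ \rho_\lambda^1(\theta(g)) \otimes \rho_\lambda^1(g).$$
The tensor-flip $\tau : v \otimes w \mapsto w \otimes v$ on $\M_\lambda^1 \otimes \M_\lambda^1$ manifestly intertwines these two actions; transporting $\tau$ back through the identification $\M_\blambda^1 \simeq \M_\lambda^1 \otimes \M_\lambda^1$ produces the desired operator $I_0$.

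A clean way to phrase the same thing, and to confirm the construction, is via Schur's lemma: the restriction $\rho_\blambda^1$ is irreducible as a representation of the real Lie group $\SL_n(\C)$ (its complexified Lie algebra decomposes as $\sl_n \oplus \sl_n$, under which $\rho_\blambda^1$ corresponds to the external tensor product of the irreducible modules $\M_\lambda$ and $\M_{\lambda^*}$), so any nonzero intertwiner between $\rho_\blambda^1$ and ${}^\theta\rho_\blambda^1$ is unique up to a scalar. The highest-weight computation above, together with the purity relation \eqref{eqn:purity-lambda} and $w_0^2 = \mathrm{Id}$ which yield $-w_0(\lambda^*)|_{\SL_n} = \lambda|_{\SL_n}$ and $-w_0(\lambda)|_{\SL_n} = \lambda^*|_{\SL_n}$, confirms that the two representations carry the same highest weight and hence are abstractly isomorphic.

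There is no genuine obstacle to the argument; the only delicacy is to check that the $\det(\bar g)^\w$ twist in \eqref{eqn:rho-blambda-g} vanishes on restriction to $G^1$, which is immediate. The normalization of $I_0$ is fixed only up to a nonzero scalar, but any choice suffices for the intertwining identity stated in the lemma.
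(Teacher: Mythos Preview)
Your proof is correct and follows essentially the same route as the paper. The paper's argument is a single sentence preceding the lemma: from \eqref{eqn:rho-blambda-g} one sees that ${}^\theta\!\rho_{(\lambda,\lambda^*)}^1 = \rho_{(\lambda^*,\lambda)}^1$, which is exactly your observation that on $G^1$ the $\theta$-conjugate swaps the two tensor factors; your explicit tensor-flip $\tau$ and the supplementary Schur's-lemma remark simply spell out what the paper leaves implicit.
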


\medskip
\paragraph{\it A unitary representation of $G^1$} 
\label{par:unitary-rep}
 For a pure weight $\blambda = (\lambda, \lambda^*)$ as in \ref{par:pure-weight}, define its cuspidal parameters as: 
 $\alpha = -w_0(\lambda) + \bfgreek{rho}$ and $\beta = -\lambda^* - \bfgreek{rho}.$ Writing $\alpha = (\alpha_1,\dots,\alpha_n)$ and 
 $\beta = (\beta_1,\dots,\beta_n)$, one has: 
 $ \alpha_j = -b_{n-j+1} + (n-2j+1)/2$ and $\beta_j = -b_j^* - (n-2j+1)/2.$
 Note that $\alpha_j + \beta_j = -\w$. Consider the character $\chi_{\blambda} : T \to \C^*$ given by:
$$
\chi_{\blambda}({\rm diag}(z_1, z_2, \dots, z_n)) \ = \ \prod_{j=1}^n z_j^{\alpha_j} \bar{z}_j^{\beta_j}.
$$
Define the parabolically induced representation of $G = \GL_n(\C)$ via normalized induction: 
$$
(\psi_{\blambda}, \J_{\blambda}) \ = \ \Ind_B^G(\chi_{\blambda}).
$$
The restriction of the representation $(\psi_{\blambda}, \J_{\blambda})$ to $G^1 = \SL_n(\C)$ is denoted $(\psi_{\blambda}^1, \J_{\blambda}^1)$; since $G = B\cdot G^1$, we have 
 $$
(\psi_{\blambda}^1, \J_{\blambda}^1) \ = \ \Ind_{B^1}^{G^1}(\chi_{\blambda}^1), 
$$
where  $\chi_{\blambda}^1$ is the restriction of $\chi_{\blambda}$ to $T^1$. If ${\rm diag}(z_1, z_2, \dots, z_n) \in T^1$ then $z_n = (z_1 z_2 \cdots z_{n-1})^{-1}$, from which it follows
that $\chi_{\blambda}^1$ is the unitary character of $T^1$ given by:  
\begin{equation}
\label{eqn:unitary-character}
\chi_{\blambda}^1({\rm diag}(z_1, z_2, \dots, z_n)) \ = \
\prod_{j=1}^{n-1} \left(\frac{z_j}{\bar{z}_j}\right)^{b_1 - b_{n-j+1} + n - j}.
\end{equation}
Hence, $(\psi_{\blambda}^1, \J_{\blambda}^1)$ is an irreducible unitary representation of $G^1.$

\begin{lemma}
\label{lem:I_infty}
There exists an isomorphism $I_\infty : \Ind_{B^1}^{G^1}(\chi_{\blambda}^1) \to \Ind_{B^1}^{G^1}(\chi_{\blambda}^1)$ with the intertwining property: 
$$
I_\infty \circ \psi_{\blambda}^1(g) \ = \ \psi_{\blambda}^1(\theta(g)) \circ I_\infty, \quad \forall g \in G^1.
$$
\end{lemma}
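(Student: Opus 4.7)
The plan is to realize $I_\infty$ as a composition $A \circ J$ of two standard intertwiners. Since the Cartan involution $\theta$ preserves $T^1$ and sends $B^1$ to the opposite Borel $\bar B^1 = \theta(B^1)$, the map $(Jf)(g) := f(\theta(g))$ is the natural candidate taking $\Ind_{B^1}^{G^1}(\chi_{\blambda}^1)$ to $\Ind_{\bar B^1}^{G^1}(\chi_{\blambda}^1)$; the second factor $A$ is a long Weyl element intertwining operator transporting induction from $\bar B^1$ back to $B^1$. The whole construction rests on the $\theta$-invariance of $\chi_{\blambda}^1$ on $T^1$, which I verify first: for $t = \mathrm{diag}(z_1,\dots,z_n) \in T^1$ one has $\theta(t) = \mathrm{diag}(\bar z_1^{-1},\dots,\bar z_n^{-1})$, and each factor $(z_j/\bar z_j)^{b_1 - b_{n-j+1} + n - j}$ in the formula \eqref{eqn:unitary-character} is invariant because $\bar z_j^{-1}/z_j^{-1} = z_j/\bar z_j$; hence $\chi_{\blambda}^1 \circ \theta = \chi_{\blambda}^1$ on $T^1$.

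With this in hand, well-definedness of $J$ is bookkeeping: for $\bar b \in \bar B^1$ one has $(Jf)(\bar b g) = f(\theta(\bar b)\theta(g)) = \chi_{\blambda}^1(\theta(\bar b))\,\delta_{B^1}^{1/2}(\theta(\bar b))(Jf)(g)$, and on $T^1$ the identities $\delta_{B^1}(\theta(t)) = \delta_{B^1}(t)^{-1} = \delta_{\bar B^1}(t)$---which follow from the eigenvalues of $\mathrm{Ad}(t)$ on the complex Lie algebra $\mathfrak n$ of $N$ and the fact that $\theta$ swaps positive and negative root spaces---combine with the $\theta$-invariance of the character to rewrite this as $\chi_{\blambda}^1(\bar b)\,\delta_{\bar B^1}^{1/2}(\bar b)(Jf)(g)$, the transformation law defining $\Ind_{\bar B^1}^{G^1}(\chi_{\blambda}^1)$. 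Denoting by $\tilde\psi$ the standard action on this target, a direct computation gives the intertwining relation $J \circ \psi_{\blambda}^1(\theta(g)) = \tilde\psi(g) \circ J$, equivalently $J \circ \psi_{\blambda}^1(g) = \tilde\psi(\theta(g)) \circ J$.

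The second ingredient $A : \Ind_{\bar B^1}^{G^1}(\chi_{\blambda}^1) \to \Ind_{B^1}^{G^1}(\chi_{\blambda}^1)$ comes from standard principal-series theory: either as the convergent integral intertwining operator over $N$ at the unitary parameter $\chi_{\blambda}^1$, or more abstractly, by noting that both sides are irreducible admissible representations carrying the same $K^1$-types (via the Iwasawa decomposition $G^1 = K^1 B^1 = K^1 \bar B^1$) and the same infinitesimal character, so Schur's lemma forces them to be isomorphic. Setting $I_\infty := A \circ J$ and composing the two intertwining relations gives $I_\infty \circ \psi_{\blambda}^1(g) = A\,\tilde\psi(\theta(g))\,J = \psi_{\blambda}^1(\theta(g))\,A\,J = \psi_{\blambda}^1(\theta(g)) \circ I_\infty$, as required. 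The main obstacle to watch in this plan is the existence and nonvanishing of $A$ at this specific unitary parameter, but the explicit unitarity in \eqref{eqn:unitary-character} together with the irreducibility of $\psi_{\blambda}^1$ already asserted in the paper makes this routine.
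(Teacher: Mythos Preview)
Your approach is essentially the same as the paper's: the paper's sketch gives the integral formula $I_\infty(f)(g)=\int_N f(w_0 n w_0^{-1}\theta(g))\,dn$ and then remarks that another way to see this is exactly your decomposition, namely the map $f\mapsto {}^\theta\! f$ (your $J$) landing in the induction from the opposite Borel, followed by the standard fact that parabolic induction is independent of the choice of parabolic with a fixed Levi datum (your $A$). You have simply filled in the details---the $\theta$-invariance of $\chi_{\blambda}^1$ and the modular character bookkeeping---that the paper leaves implicit.
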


This follows from the theory of standard intertwining operators. For a function $f$ in the induced space $\Ind_{B^1}^{G^1}(\chi_{\blambda}^1)$, the integral 
$$
I_\infty(f)(g) \ = \ \int_N f(w_0 n w_0^{-1} \theta(g))\, dn, \quad \mbox{($g \in G^1$)},
$$
after an interpretation via analytic continuation, does the job. Another way to organize one's thoughts is to consider the operator $f \mapsto {}^\theta\! f$, where 
${}^\theta\! f(g) = f(\theta(g))$ for 
$f \in \Ind_{B^1}^{G^1}(\chi_{\blambda}^1),$ 
which has the correct intertwining property except
${}^\theta f$ lives in the induced space where we induce from the opposing Borel subgroup of all lower triangular matrices; then 
appeal to the fact that a parabolically induced representation depends only on the representation of the Levi subgroup and is independent of the choice of parabolic subgroup with that Levi--a fact 
whose proof uses the above intertwining integral. 

\medskip
\paragraph{\it Cohomology of $\J^1_{\blambda} \otimes \M^1_{\blambda}$}

\begin{lemma}
\label{lem:coh-j-lambda}
Let $\blambda$ be a pure weight as above. We have
$$
H^q(\g^1, K^1; \, \J^1_{\blambda} \otimes \M^1_{\blambda}) \ = \ \Hom_{K^1}(\wedge^q \p^1_c, \, \J^1_{\blambda} \otimes \M^1_{\blambda}).
$$
Furthermore, 
$$
\dim_\C(H^q(\g^1, K^1; \J^1_{\blambda} \otimes \M^1_{\blambda})) \ = \ 
\binom{n-1}{q - n(n-1)/2}. 
$$
\end{lemma}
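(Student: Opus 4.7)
The plan is to handle the two assertions of the lemma separately.

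For the first identification $H^q = \Hom_{K^1}(\wedge^q \p^1_c, \J^1_{\blambda} \otimes \M^1_{\blambda})$, I would appeal to Wigner's vanishing lemma (Borel--Wallach, Ch.\,I, \S4): the differentials of the standard complex computing relative Lie algebra cohomology vanish on the infinitesimal-character eigenspace matching $(\M^1_{\blambda})^\vee$. It therefore suffices to verify that the unitary principal series $\J^1_{\blambda} = \Ind_{B^1}^{G^1}(\chi^1_{\blambda})$ has the same infinitesimal character as $(\M^1_{\blambda})^\vee$. This is a direct verification using the cuspidal parameters $\alpha = -w_0(\lambda) + \bfgreek{rho}$, $\beta = -\lambda^* - \bfgreek{rho}$ combined with the purity relation $\lambda^* = -w_0(\lambda) + \w \cdot \bfgreek{delta}_n$ of \eqref{eqn:purity-lambda}.

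For the dimension formula, I would realize $\J^1_{\blambda}$ as a Vogan--Zuckerman cohomologically-induced module $A_\mathfrak{q}(\kappa)$ attached to the $\theta$-stable Borel $\mathfrak{q} = \l \oplus \u$ of $\g^1_c$, with Levi $\l = \t^1_c$ the complex Cartan and $\kappa$ a character of $\l$ determined by the cuspidal parameters. The Vogan--Zuckerman cohomology formula \cite{vogan-zuckerman} then gives
\[
H^q(\g^1, K^1; \J^1_{\blambda} \otimes \M^1_{\blambda}) \ \cong \ \Hom_{{}^\circ T^1}\bigl(\wedge^{q-R}(\l \cap \p^1_c),\, \C\bigr),
\]
where $R := \dim_\C(\u \cap \p^1_c)$. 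The remaining work is linear algebra on $\g^1_c = \sl_n(\C) \oplus \sl_n(\C)$: an element $(X, Y) \in \u$ lies in $\p^1_c$ precisely when $Y = \bar X^t$, so the map $X \mapsto (X, \bar X^t)$ identifies $\u \cap \p^1_c$ with the nilradical $\n \subset \sl_n(\C)$, yielding $R = n(n-1)/2$; similarly $\l \cap \p^1_c \cong \a^1_c$ has complex dimension $n-1$, and ${}^\circ T^1$ acts trivially on it since $\l$ is abelian. The binomial $\binom{n-1}{q - n(n-1)/2}$ then drops out.

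I expect the main obstacle to be the careful identification of $\J^1_{\blambda}$ as the correct $A_\mathfrak{q}(\kappa)$-module with precisely matched parameters, i.e., verifying that the cuspidal parameters together with the purity data realize the given principal series as the stated VZ module. A fallback, more hands-on route uses the Iwasawa decomposition $G^1 = K^1 B^1$ and Frobenius reciprocity to reduce the computation to $\Hom_{{}^\circ T^1}(\wedge^q \p^1_c, \chi^1_{\blambda} \otimes \M^1_{\blambda})$; purity supplies the clean simplifications ${\rm ch}(\M^1_{\blambda})|_{{}^\circ T^1} = {\rm ch}(\M_\lambda)^2$ and $\chi^1_{\blambda}|_{{}^\circ T^1} = 2\bigl(-w_0(\lambda)+\bfgreek{rho}\bigr)$, and Weyl's character formula combined with Lemma~\ref{lem:characters-wedge-u}'s bound $-2\bfgreek{rho} \leq \gamma \leq 2\bfgreek{rho}$ could then be leveraged for a direct character count, though upgrading the resulting Euler characteristic to individual-degree vanishing would become the principal difficulty in this approach.
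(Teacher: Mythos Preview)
Your proposal is correct, and in fact considerably more detailed than what the paper provides: the paper does not prove this lemma at all but merely cites Wallach \cite[Prop.\,9.4.3]{wallach} for the first identity and Clozel \cite[Lemme\,3.14]{clozel} for the dimension formula.

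Your route to the first identity via Wigner's lemma and matching of infinitesimal characters is exactly the content behind the Wallach citation (vanishing differentials for unitary modules whose Casimir eigenvalue matches that of the coefficient system), so on that point you and the paper agree. For the dimension count you take a genuinely different path: rather than appealing to Clozel (who in turn relies on Delorme's computations for tempered principal series of complex groups), you realize $\J^1_{\blambda}$ as an $A_{\mathfrak q}(\kappa)$ for the $\theta$-stable Borel and read off the cohomology from the Vogan--Zuckerman formula. This is entirely legitimate---for complex reductive groups the unitary principal series with regular integral infinitesimal character are precisely the $A_{\mathfrak q}(\kappa)$ attached to $\theta$-stable Borels---and your dimension bookkeeping ($R = \dim(\u \cap \p^1_c) = n(n-1)/2$, $\l \cap \p^1_c \cong \a^1_c$ of dimension $n-1$ with trivial ${}^\circ T^1$-action) is correct. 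Your fallback Frobenius-reciprocity strategy is in fact what the paper exploits \emph{after} this lemma (Lemmas~\ref{lem:canonical-character}--\ref{lem:canonical-K-type-coh}) to pin down the canonical $K^1$-type, so it is amusing that you anticipate it, though as you note it would be harder to extract individual-degree information that way without already knowing the answer.
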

The first assertion is standard; see, for example, Wallach \cite[Prop.\,9.4.3]{wallach}. 
The statement about dimensions is contained in Clozel \cite[Lemme 3.14]{clozel}.

\medskip
\paragraph{\it The Lefschetz number to compute}
Using the intertwining operators in Lem.\,\ref{lem:I_0} and Lem.\,\ref{lem:I_infty}, define 
\begin{equation}
\label{eqn:int-op}
I :  \J^1_{\blambda} \otimes \M^1_{\blambda} \to \J^1_{\blambda} \otimes \M^1_{\blambda}, \quad I := I_\infty \otimes I_0.
\end{equation}
The automorphism $\theta$ on $G$ induces the automorphism $\theta^q$ of $H^q(\g^1, K^1; \, \J^1_{\blambda} \otimes \M^1_{\blambda})$, 
which can be described, after appealing to Lem.\,\ref{lem:coh-j-lambda}, as mapping
 $f \in \Hom_{K^1}(\wedge^q \p^1_c, \, \J^1_{\blambda} \otimes \M^1_{\blambda})$ to 
$\theta^q(f)$ where $\theta^q(f)(X) = I(f(\theta(X)))$ for all $X \in \wedge^q \p^1_c.$ The rest of this subsection is devoted to computing the Lefschetz number
$$
\Lef(\theta, \SL_n(\C), \J^1_{\blambda} \otimes \M^1_{\blambda}) = \sum_q (-1)^q \, {\rm Trace}(\theta^q),
$$ 
and specifically to showing that it is nonzero. See Prop.\,\ref{prop:slnc-lefschetz} below.

\medskip
\paragraph{\it Relevant $K^1$-types of $\J^1_{\blambda} \otimes \M^1_{\blambda}$}

After Lem.\,\ref{lem:coh-j-lambda}, we need to understand the $K^1$-types that occur both in $\wedge^q \p^1_c$ and in 
$$
\J^1_{\blambda} \otimes \M^1_{\blambda} = \Ind_{B^1}^{G^1}(\chi^1_{\blambda}) \otimes \M^1_{\blambda} = \Ind_{B^1}^{G^1}(\chi^1_{\blambda} \otimes \M^1_{\blambda}|_{B^1}).
$$
For the latter, using $G^1 = B^1K^1$ and that $B^1 \cap K^1 = {}^\circ T^1$, one has
$$
\Res_{K^1}(\J^1_{\blambda} \otimes \M^1_{\blambda}) \ = \ 
\Res_{K^1}(\Ind_{B^1}^{G^1}(\chi^1_{\blambda} \otimes \M^1_{\blambda}|_{B^1})) \ \simeq \ 
\Ind_{{}^\circ T^1}^{K^1}(\chi^1_{\blambda} \otimes \M^1_{\blambda}|_{{}^\circ T^1}).
$$
For $\wedge^q \p^1_c$, using $\p^1_c = \a^1_c \oplus \u^1_c$, and that the adjoint action of ${}^\circ T^1$ on $\a^1_c$ is trivial, we get
\begin{equation}
\label{eqn:kunneth-frobenius}
\begin{split}
\Hom_{K^1}(\wedge^q \p^1_c, \, \J^1_{\blambda} \otimes \M^1_{\blambda}) 
& \simeq \Hom_{K^1}(\wedge^q \p^1_c, \, \Ind_{{}^\circ T^1}^{K^1}(\chi^1_{\blambda} \otimes \M^1_{\blambda}|_{{}^\circ T^1}))  \\
& \simeq  \Hom_{{}^\circ T^1}(\wedge^q \p^1_c, \, \chi^1_{\blambda} \otimes \M^1_{\blambda}|_{{}^\circ T^1}) \\ 
 & \simeq \bigoplus_{s+t = q} \Hom_{{}^\circ T^1}(\wedge^s \a^1_c \otimes \wedge^t \u_c, \, \chi^1_{\blambda} \otimes \M^1_{\blambda}|_{{}^\circ T^1}) \\ 
 & \simeq  \bigoplus_{s+t = q} \wedge^s \a_c^{1*} \otimes  \Hom_{{}^\circ T^1}(\wedge^t \u_c, \, \chi^1_{\blambda} \otimes \M^1_{\blambda}|_{{}^\circ T^1}). 
\end{split}
\end{equation}

 \medskip
\paragraph{\it Relevant ${}^\circ T^1$-characters in $\chi^1_{\blambda} \otimes \M^1_{\blambda}|_{{}^\circ T}$}
From \eqref{eqn:kunneth-frobenius} and the statement about dimension in Lem.\,\ref{lem:coh-j-lambda} we deduce the following:  

\begin{lemma}
\label{lem:canonical-character}
$$
\Hom_{{}^\circ T^1}(\wedge^t \u_c, \chi^1_{\blambda} \otimes \M^1_{\blambda}|_{{}^\circ T^1}) \ = \ 
\begin{cases}
\C & \mbox{if $t = \frac{n(n-1)}{2}$,} \\
0 & \mbox{otherwise}, 
\end{cases}
$$
i.e., there is a unique character of the compact torus ${}^\circ T^1$ denoted, say, $\chi_0$, that appears in $\wedge^\bullet \u_c$ and in $\chi^1_{\blambda} \otimes \M^1_{\blambda}|_{{}^\circ T^1}$;  necessarily, it appears in a unique degree; this degree is $\bullet = n(n-1)/2 = \dim(\u_c)/2$; 
furthermore, it appears with multiplicity one in $\wedge^{n(n-1)/2}\u_c$ and in 
$\chi^1_{\blambda} \otimes \M^1_{\blambda}|_{{}^\circ T^1}$. 
\end{lemma}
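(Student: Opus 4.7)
The plan is to extract the lemma directly from the decomposition \eqref{eqn:kunneth-frobenius} combined with the dimension formula of Lemma \ref{lem:coh-j-lambda}; no further input should be required, since the hard part, namely identifying $\dim H^q = \binom{n-1}{q-n(n-1)/2}$, is already in place. The argument I have in mind is essentially a combinatorial unpacking via a short generating-function identity.

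First I would abbreviate
$$d_t \ := \ \dim_\C \Hom_{{}^\circ T^1}\bigl(\wedge^t \u_c,\; \chi^1_{\blambda} \otimes \M^1_{\blambda}|_{{}^\circ T^1}\bigr),$$
and use $\dim_\C \wedge^s \a_c^{1*} = \binom{n-1}{s}$ (since $\a^1$ has real dimension $n-1$) to rewrite \eqref{eqn:kunneth-frobenius} as the numerical identity
$$\binom{n-1}{q - n(n-1)/2} \ = \ \sum_{s+t=q} \binom{n-1}{s}\, d_t, \qquad q \geq 0,$$
with the convention $\binom{n-1}{k}=0$ for $k<0$ or $k>n-1$. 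Next, I would pass to formal generating functions: setting $D(x) := \sum_{t\ge 0} d_t\, x^t$ (a polynomial, as $\u_c$ is finite-dimensional) and $A(x) := (1+x)^{n-1}$, the identity above becomes $A(x)\, D(x) = x^{n(n-1)/2}\, A(x)$. Since $A(x)$ is a unit in $\C[[x]]$, I would divide to obtain $D(x) = x^{n(n-1)/2}$, i.e., $d_t = \delta_{t,\,n(n-1)/2}$. This yields the first assertion, both the vanishing away from degree $n(n-1)/2$ and the one-dimensionality there.

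For the multiplicity-one refinement, I would decompose the two finite-dimensional semisimple ${}^\circ T^1$-representations into character-isotypic components, say $\wedge^{n(n-1)/2}\u_c \simeq \bigoplus_\chi m^{\u}_\chi\cdot\chi$ and $\chi^1_{\blambda} \otimes \M^1_{\blambda}|_{{}^\circ T^1} \simeq \bigoplus_\chi m^V_\chi\cdot\chi$. The already established equality
$$1 \ = \ \dim_\C \Hom_{{}^\circ T^1}\bigl(\wedge^{n(n-1)/2}\u_c,\; \chi^1_{\blambda} \otimes \M^1_{\blambda}|_{{}^\circ T^1}\bigr) \ = \ \sum_\chi m^\u_\chi\, m^V_\chi$$
expresses $1$ as a sum of non-negative integers, so exactly one summand is nonzero and it equals $1$. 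This forces the existence of a unique $\chi_0$ with $m^\u_{\chi_0} = m^V_{\chi_0} = 1$, completing the proof.

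I do not foresee any substantive obstacle; the whole content of the lemma is a combinatorial consequence of Lemma \ref{lem:coh-j-lambda}, which we are entitled to invoke. The only point requiring minor vigilance is to verify that the generating-function identity holds with the boundary conventions on the binomial coefficients, which is routine.
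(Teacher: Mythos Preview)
Your proposal is correct and follows exactly the route the paper indicates: the paper simply says the lemma is deduced ``from \eqref{eqn:kunneth-frobenius} and the statement about dimension in Lem.\,\ref{lem:coh-j-lambda},'' and your generating-function computation and the subsequent $\sum_\chi m^\u_\chi m^V_\chi = 1$ argument are a clean way to make that deduction explicit.
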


 \medskip
\paragraph{\it The canonical character $\chi_0$ of ${}^\circ T^1$}

\begin{lemma}
\label{lem:canonical-chi_0}
The canonical character $\chi_0$ of Lem.\,\ref{lem:canonical-character} is given by:
$$
\chi_0 \ = \ 2 \bfgreek{rho} \ = \ \sum_{\alpha \in \Phi^+} \alpha.
$$
\end{lemma}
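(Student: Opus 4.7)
The plan is to exhibit $2\bfgreek{rho} = \sum_{\alpha\in\Phi^+}\alpha$ as a common ${}^\circ T^1$-weight appearing both in $\wedge^{n(n-1)/2}\u_c$ and in $\chi^1_{\blambda}\otimes \M^1_{\blambda}|_{{}^\circ T^1}$, and then invoke the uniqueness in Lem.\,\ref{lem:canonical-character} to identify $\chi_0$ with it.

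First I would read off the ${}^\circ T^1$-weights in $\u_c$ from the explicit formulas for $X^p_{ij}$ and $Y^p_{ij}$ above. Projecting onto the holomorphic summand in $\g^1_c\cong\sl_n(\C)\oplus\overline{\sl_n(\C)}$, one finds that $X^p_\alpha$ is a weight vector of weight $\alpha$ and $Y^p_\beta$ is a weight vector of weight $\beta$ for the adjoint action of ${}^\circ T^1$. Consequently the pure wedge $\bigwedge_{\alpha\in\Phi^+}X^p_\alpha \in \wedge^{n(n-1)/2}\u_c$ is a nonzero weight vector of weight $\sum_{\alpha\in\Phi^+}\alpha = 2\bfgreek{rho}$, so $2\bfgreek{rho}$ indeed occurs in $\wedge^{n(n-1)/2}\u_c$.

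Next I would check that $2\bfgreek{rho}$ occurs in $\chi^1_{\blambda}\otimes \M^1_{\blambda}|_{{}^\circ T^1}$ by verifying that $2\bfgreek{rho}-\chi^1_{\blambda}$ is a weight of $\M^1_{\blambda}|_{{}^\circ T^1}$. Working additively in $X^*({}^\circ T^1)=\Z^n/\Z(1,\ldots,1)$, the relation $z_j/\bar z_j = z_j^2$ on the compact torus combined with \eqref{eqn:unitary-character} identifies $\chi^1_{\blambda}$ with the vector whose $j$-th entry ($1\le j\le n-1$) is $2(b_1-b_{n-j+1}+n-j)$ and whose $n$-th entry is $0$; since $2\bfgreek{rho} = (n-2j+1)_{j=1}^n$, a direct subtraction and translation by a scalar multiple of $(1,\ldots,1)$ yields
\[
2\bfgreek{rho} - \chi^1_{\blambda} \ \equiv \ (2b_n, 2b_{n-1}, \ldots, 2b_1) \ = \ 2w_0(\lambda) \pmod{\Z(1,\ldots,1)}.
\]
On the other hand, the ${}^\circ T^1$-weights of $\M_{\blambda}|_{{}^\circ T^1}=(\M_\lambda\otimes\M_{\lambda^*})|_{{}^\circ T^1}$ are the differences $\mu-\mu^*$ as $\mu$ runs over $T$-weights of $\M_\lambda$ and $\mu^*$ over $T$-weights of $\M_{\lambda^*}$ (the sign change reflecting $\bar z = z^{-1}$ on the compact torus). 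Taking $\mu=w_0(\lambda)$, the lowest weight of $\M_\lambda$, and $\mu^*=\lambda^* = -w_0(\lambda)+\w\bfgreek{delta}_n$, the highest weight of $\M_{\lambda^*}$---each occurring with multiplicity one---gives the weight $\mu-\mu^*=2w_0(\lambda)-\w\bfgreek{delta}_n$, which equals $2w_0(\lambda)$ modulo $\Z(1,\ldots,1)$, as required.

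Combining these two steps, $2\bfgreek{rho}$ occurs as a ${}^\circ T^1$-weight of both $\wedge^{n(n-1)/2}\u_c$ and $\chi^1_{\blambda}\otimes \M^1_{\blambda}|_{{}^\circ T^1}$, so the uniqueness in Lem.\,\ref{lem:canonical-character} forces $\chi_0 = 2\bfgreek{rho}$. I expect the main obstacle to be bookkeeping: correctly extracting the adjoint ${}^\circ T^1$-action on $X^p_\alpha$ and $Y^p_\beta$ from their presentations in $\sl_n(\C)\otimes_\R\C$, and carefully tracking the quotient by $\Z(1,\ldots,1)$ when comparing characters of the compact torus with weights of $T$-representations. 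The remaining ingredients---multiplicity-one for extremal weights of an irreducible polynomial $\GL_n(\C)$-representation and the conjugate-linear twist in the action of $\GL_n(\C)$ on $\M_{\lambda^*}$---are entirely standard.
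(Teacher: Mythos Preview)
Your proposal is correct and follows essentially the same route as the paper: exhibit $2\bfgreek{rho}$ in $\wedge^{n(n-1)/2}\u_c$ via the vector $\bigwedge_{\alpha\in\Phi^+}X^p_\alpha$, exhibit it in $\chi^1_{\blambda}\otimes\M^1_{\blambda}|_{{}^\circ T^1}$ via the extremal weight vector $\rho_\lambda(w_0)v_\lambda\otimes v_{\lambda^*}$ (your choice $\mu=w_0(\lambda)$, $\mu^*=\lambda^*$), and invoke the uniqueness of Lem.\,\ref{lem:canonical-character}. The only difference is presentational: the paper computes directly with the coordinates $z_j,\bar z_j$ on ${}^\circ T^1$, whereas you work additively in $X^*({}^\circ T^1)\cong\Z^n/\Z(1,\ldots,1)$, but the two computations are line-by-line equivalent.
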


\begin{proof}
From Lem.\,\ref{lem:characters-wedge-u} one sees that the character $2 \bfgreek{rho}$ is realized by the weight vector $\wedge_{\alpha \in \Phi^+} X_\alpha^p$ 
in $\wedge^{n(n-1)/2}\u_c$. Recall that $2\bfgreek{rho}$ as a character of $T$ is represented by $(n-1,n-3,\dots, 1-n)$, and as a character of $T^1$ it is the character which on 
$\underline{z} := {\rm diag}(z_1,\dots, z_n) \in T^1$ (note: $z_j \in \C^*$, and $z_n = (z_1z_2\cdots z_{n-1})^{-1}$) is: 
$2\bfgreek{rho}(\underline{z}) \ = \ \prod_{j=1}^{n-1} z_j^{2(n-j)}.$
Next, recall $\blambda = (\lambda, \lambda^*)$ and related notations from \ref{par:pure-weight}; let $v_\lambda \in \M_\lambda$ be 
the weight vector realizing the highest weight $\lambda$; similarly, $v_{\lambda^*} \in \M_{\lambda^*}$; 
for the Weyl group element $w_0$, the character $(w_0\lambda, \lambda^*)$ is an extremal weight, realized by 
the action of 
${}^\circ T$ on the weight vector $\rho_{\lambda}(w_0) v_\lambda \otimes v_{\lambda^*} \in \M_{\blambda}$ which is described by 
$
\underline{z} = {\rm diag}(z_1, z_2, \dots, z_n) \mapsto \prod_{j=1}^n z_j^{b_{n-j+1}} \cdot \bar{z}_j^{b_j^*}.$
If $\underline{z} \in {}^\circ T^1$, i.e., $\bar{z_j} = z_j^{-1}$ and $z_n = (z_1z_2\cdots z_{n-1})^{-1}$, then one sees that ${}^\circ T^1$ acts on 
$\rho_{\lambda}(w_0) v_\lambda \otimes v_{\lambda^*}$ by the character:
$$
\underline{z} \mapsto \prod_{j=1}^{n-1} z_j^{(b_{n-j+1} - b_1) - (b_j^*-b_n^*)} \ = \ \prod_{j=1}^{n-1} z_j^{2(b_{n-j+1} - b_1)}; 
$$
hence, from \eqref{eqn:unitary-character}, the vector $\rho_{\lambda}(w_0) v_\lambda \otimes v_{\lambda^*}$ is a weight vector in 
$\chi^1_{\blambda} \otimes \M^1_{\blambda}|_{{}^\circ T^1}$ for the character 
$$
\underline{z} \mapsto 
\prod_{j=1}^{n-1} \left( \frac{z_j}{\bar{z}_j} \right)^{b_1 - b_{n-j+1} + n - j} \cdot 
\prod_{j=1}^{n-1} z_j^{2(b_{n-j+1} - b_1)} \ = \ 
\prod_{j=1}^{n-1} z_j^{2(n-j)} \ = \ (2\bfgreek{rho})(\underline{z});
$$
whence, by uniqueness part of Lem.\,\ref{lem:canonical-character}, $\chi_0 = 2\bfgreek{rho}.$
\end{proof}

\medskip
\paragraph{\it The canonical $K$-type supporting cohomology}
For any compact Lie group $\K$ (such as $K^1$ or ${}^\circ T^1$), and a $\K$-module $V,$ 
the set of isomorphism classes of irreducible representations of $\K$ that appear in $V$ is denoted $\Spec_\K(V).$ We begin by noting that a $K^1$-type that supports cohomology 
contains the canonical character $\chi_0.$

\begin{lemma}
\label{lem:K-type-chi-0}
Let $\pi$ be an irreducible representation of $K^1$ that appears in $\wedge^q \p_c^1$  and also in $\Ind_{{}^\circ T^1}^{K^1}(\chi^1_{\blambda} \otimes \M^1_{\blambda}|_{{}^\circ T^1})$.
Then:
\smallskip
\begin{enumerate}
\item[(i)] $\Spec_{{}^\circ T^1}(\pi|_{{}^\circ T^1}) \, \cap \, \Spec_{{}^\circ T^1}(\chi^1_{\blambda} \otimes \M^1_{\blambda}|_{{}^\circ T^1}) \ = \ \{\chi_0\}.$ 

\smallskip
\item[(ii)] $\Hom_{K^1} \left(\pi, \, \Ind_{{}^\circ T^1}^{K^1}(\chi_0) \right) \ = \ 
\Hom_{K^1}\left(\pi, \Ind_{{}^\circ T^1}^{K^1}(\chi^1_{\blambda} \otimes \M^1_{\blambda}|_{{}^\circ T^1}) \right).$
\end{enumerate}
\end{lemma}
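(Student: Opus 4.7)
The plan is to treat both parts as bookkeeping consequences of Lem.\,\ref{lem:canonical-character}, leveraging the decomposition $\p^1_c = \a^1_c \oplus \u_c$ in which ${}^\circ T^1$ acts trivially on the $\a^1_c$-summand, and then translating the hom-space statement in (ii) back and forth via Frobenius reciprocity. Since the genuinely representation-theoretic content is already packaged into Lem.\,\ref{lem:canonical-character}, I expect no real obstacle beyond careful bookkeeping with ${}^\circ T^1$-characters.

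For part (i), the first step is to use that $\pi$ embeds as an irreducible $K^1$-constituent of $\wedge^q \p^1_c$, so every ${}^\circ T^1$-weight of $\pi|_{{}^\circ T^1}$ occurs in $\wedge^q \p^1_c$. The decomposition
\[
\wedge^q \p^1_c \;=\; \bigoplus_{s+t=q} \wedge^s \a^1_c \otimes \wedge^t \u_c,
\]
combined with triviality of the ${}^\circ T^1$-action on $\a^1_c$, then yields
\[
\Spec_{{}^\circ T^1}(\pi|_{{}^\circ T^1}) \;\subseteq\; \bigcup_{t \leq q} \Spec_{{}^\circ T^1}(\wedge^t \u_c).
\]
Intersecting with $\Spec_{{}^\circ T^1}(\chi^1_{\blambda} \otimes \M^1_{\blambda}|_{{}^\circ T^1})$ and invoking Lem.\,\ref{lem:canonical-character}---which forces the intersection $\Spec_{{}^\circ T^1}(\wedge^t \u_c) \cap \Spec_{{}^\circ T^1}(\chi^1_{\blambda} \otimes \M^1_{\blambda}|_{{}^\circ T^1})$ to be empty for $t \neq n(n-1)/2$ and to be exactly $\{\chi_0\}$ for $t = n(n-1)/2$---cuts the intersection in (i) down to a subset of $\{\chi_0\}$. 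The hypothesis that $\pi$ also appears in $\Ind_{{}^\circ T^1}^{K^1}(\chi^1_{\blambda} \otimes \M^1_{\blambda}|_{{}^\circ T^1})$ then forces, via Frobenius reciprocity, this intersection to be nonempty, hence exactly $\{\chi_0\}$.

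For part (ii), the plan is to apply Frobenius reciprocity
\[
\Hom_{K^1}(\pi, \Ind_{{}^\circ T^1}^{K^1}(W)) \;=\; \Hom_{{}^\circ T^1}(\pi|_{{}^\circ T^1}, W)
\]
to both $W = \chi_0$ and $W = \chi^1_{\blambda} \otimes \M^1_{\blambda}|_{{}^\circ T^1}$, and to use part (i) to show that the two right-hand sides coincide. By (i), the only ${}^\circ T^1$-character common to $\pi|_{{}^\circ T^1}$ and $\chi^1_{\blambda} \otimes \M^1_{\blambda}|_{{}^\circ T^1}$ is $\chi_0$, and by the multiplicity-one assertion of Lem.\,\ref{lem:canonical-character} the $\chi_0$-isotypic subspace of $\chi^1_{\blambda} \otimes \M^1_{\blambda}|_{{}^\circ T^1}$ is precisely a single copy of $\chi_0$. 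Consequently
\[
\Hom_{{}^\circ T^1}(\pi|_{{}^\circ T^1}, \chi^1_{\blambda} \otimes \M^1_{\blambda}|_{{}^\circ T^1}) \;=\; \Hom_{{}^\circ T^1}(\pi|_{{}^\circ T^1}, \chi_0),
\]
and rewriting both sides via Frobenius reciprocity delivers (ii).
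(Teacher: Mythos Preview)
Your proposal is correct and follows essentially the same approach as the paper's (very terse) proof: for (i) you use that $\pi$ embeds in $\wedge^q \p^1_c$ to constrain its ${}^\circ T^1$-weights and then invoke Lem.\,\ref{lem:canonical-character} together with Frobenius reciprocity, and for (ii) you reduce both $\Hom$-spaces via Frobenius reciprocity to $\Hom_{{}^\circ T^1}(\pi|_{{}^\circ T^1}, \chi_0)$. The only additional detail you supply beyond the paper's two-sentence argument is the explicit use of the decomposition $\p^1_c = \a^1_c \oplus \u_c$ to pass from weights of $\wedge^q \p^1_c$ to weights of the $\wedge^t \u_c$, which is exactly what is implicit in the paper's appeal to Lem.\,\ref{lem:canonical-character}.
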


\begin{proof}
Since $\pi$ is irreducible, appearing in $\wedge^q \p^1_c$ means that it embeds, from which (i) follows by using Lem.\,\ref{lem:canonical-character} and Frobenius reciprocity. 
For (ii), observe that both the $\Hom$-spaces are equal to $\Hom_{{}^\circ T^1}(\pi|_{{}^\circ T^1}, \chi_0).$
\end{proof}

\begin{lemma}
\label{lem:K-type-inducing-chi-0}
$$
\Hom_{K^1} \left(\wedge^q \p^1_c, \, \Ind_{{}^\circ T^1}^{K^1}(\chi^1_{\blambda} \otimes \M^1_{\blambda}|_{{}^\circ T^1}) \right) \ = \ 
\Hom_{K^1} \left(\wedge^q \p^1_c, \, \Ind_{{}^\circ T^1}^{K^1}(\chi_0)\right).
$$
\end{lemma}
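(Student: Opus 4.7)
The plan is to produce the asserted equality by pushing the natural ${}^\circ T^1$-equivariant inclusion $\chi_0 \hookrightarrow V := \chi^1_{\blambda} \otimes \M^1_{\blambda}|_{{}^\circ T^1}$ first through the induction functor and then through $\Hom_{K^1}(\wedge^q \p^1_c, -)$. Such an inclusion exists, and is essentially unique, because Lemma \ref{lem:canonical-character} asserts that $\chi_0 = 2\bfgreek{rho}$ occurs in $V$ with multiplicity exactly one. Since $\Ind_{{}^\circ T^1}^{K^1}$ is exact (being induction from a closed subgroup of a compact Lie group), the short exact sequence $0 \to \chi_0 \to V \to V/\chi_0 \to 0$ induces a short exact sequence of $K^1$-modules; applying the left-exact functor $\Hom_{K^1}(\wedge^q \p^1_c, -)$ then reduces the lemma to the vanishing
\[
\Hom_{K^1}\bigl(\wedge^q \p^1_c, \Ind_{{}^\circ T^1}^{K^1}(V/\chi_0)\bigr) \ = \ 0.
\]

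By Frobenius reciprocity this equals $\Hom_{{}^\circ T^1}(\wedge^q \p^1_c, V/\chi_0)$, so it suffices to check that $V/\chi_0$ shares no ${}^\circ T^1$-character with $\wedge^q \p^1_c$. Using $\p^1_c = \a^1_c \oplus \u_c$ and the fact that ${}^\circ T^1$ acts trivially on $\a^1_c$, the ${}^\circ T^1$-characters appearing in $\wedge^q \p^1_c$ are exactly those appearing in $\wedge^t \u_c$ for some $0 \leq t \leq q$. Lemma \ref{lem:canonical-character} then tells us that among all such characters only $\chi_0$ appears in $V$, and with multiplicity one; consequently no ${}^\circ T^1$-character of $\wedge^{\bullet} \u_c$ survives in $V/\chi_0$, and the required Hom-space vanishes. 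No serious technical obstacle is expected, since the two crucial inputs---the multiplicity-one of $\chi_0$ in $V$ and the identification of the canonical character as $2\bfgreek{rho}$---have been supplied by Lemmas \ref{lem:canonical-character} and \ref{lem:canonical-chi_0}; an entirely equivalent presentation would decompose $\wedge^q \p^1_c$ into $K^1$-types and apply Lemma \ref{lem:K-type-chi-0}(ii) termwise, handling the types not occurring in $\Ind_{{}^\circ T^1}^{K^1}(V)$ by a Frobenius-reciprocity argument using part (i) of the same lemma.
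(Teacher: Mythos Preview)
Your argument is correct and amounts to the same proof as the paper's, which applies Frobenius reciprocity directly to identify the left-hand side with $\Hom_{{}^\circ T^1}(\wedge^q \p^1_c,\,\chi^1_{\blambda}\otimes\M^1_{\blambda}|_{{}^\circ T^1})$ and then invokes Lem.\,\ref{lem:canonical-character} to factor this as $\Hom_{{}^\circ T^1}(\wedge^q \p^1_c,\chi_0)\otimes\Hom_{{}^\circ T^1}(\chi_0,\chi^1_{\blambda}\otimes\M^1_{\blambda}|_{{}^\circ T^1})$, the second factor being one-dimensional. Your short-exact-sequence packaging is a harmless reformulation of this; the appeal to exactness of induction is unnecessary since Frobenius reciprocity already lands you at the ${}^\circ T^1$-level where the whole argument plays out.
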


\begin{proof}
Using Frobenius reciprocity and Lem.\,\ref{lem:canonical-character} one has:
\begin{equation*}
\begin{split}
\Hom_{K^1}(\wedge^q \p^1_c, \, \Ind_{{}^\circ T^1}^{K^1}(\chi^1_{\blambda} \otimes \M^1_{\blambda}|_{{}^\circ T^1}))
 & \ = \ \Hom_{{}^\circ T^1}(\wedge^q \p^1_c, \, \chi^1_{\blambda} \otimes \M^1_{\blambda}|_{{}^\circ T^1}) \\
 & \ = \ \Hom_{{}^\circ T^1}(\wedge^q \p^1_c, \, \chi_0) \otimes \Hom_{{}^\circ T^1}(\chi_0,  \chi^1_{\blambda} \otimes \M^1_{\blambda}|_{{}^\circ T^1}).
\end{split}
\end{equation*}
Since $\chi_0$ is a weight of $\chi^1_{\blambda} \otimes \M^1_{\blambda}$ that came from an extremal weight of $\M^1_{\blambda}$ as in the proof of Lem.\,\ref{lem:canonical-chi_0}, it appears with multiplicity one, i.e., the space  
$\Hom_{{}^\circ T^1}(\chi_0,  \chi^1_{\blambda} \otimes \M_{\blambda}|_{{}^\circ T})$ is one-dimensional. The lemma follows using Frobenius reciprocity again. 
\end{proof}

For a character $\chi$ of ${}^\circ T^1$, which is a dominant integral weight for the Lie group $K^1$, let $\pi_\chi$ denote the irreducible representation of $K^1$ with 
highest weight $\chi.$ For the canonical character $\chi_0$ of Lem.\,\ref{lem:canonical-chi_0}, the irreducible representation $\pi_{\chi_0}$ is the unique $K$-type that supports 
cohomology--this is made precise in the following lemma. This is inspired by the work of Vogan and Zuckerman; especially, see \cite[Cor.\,3.7]{vogan-zuckerman}. 

\begin{lemma}
\label{lem:canonical-K-type-coh}
The irreducible representation $\pi_{\chi_0}$ of $K^1$ with highest weight $\chi_0$ (Lem.\,\ref{lem:canonical-chi_0}) 
appears with multiplicity one in $\Ind_{{}^\circ T^1}^{K^1}(\chi^1_{\blambda} \otimes \M^1_{\blambda}|_{{}^\circ T^1})$, and this inclusion 
induces the equality: 
$$
\Hom_{K^1} \left(\wedge^q \p^1_c, \, \pi_{\chi_0} \right) \ = \ 
\Hom_{K^1} \left(\wedge^q \p^1_c, \, \Ind_{{}^\circ T^1}^{K^1}(\chi^1_{\blambda} \otimes \M^1_{\blambda}|_{{}^\circ T^1})\right). 
$$
\end{lemma}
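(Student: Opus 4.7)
The plan is to exploit the extremality of $\chi_0 = 2\bfgreek{rho}$ among the ${}^\circ T^1$-weights of $\wedge^\bullet \u_c$ (Lem.\,\ref{lem:characters-wedge-u}) together with Frobenius reciprocity for the compact pair $({}^\circ T^1, K^1)$. The whole argument is driven by the fact that $\chi_0$ is both the unique character in the intersection spectrum of $\wedge^\bullet \u_c$ and $\chi^1_\blambda \otimes \M^1_\blambda|_{{}^\circ T^1}$ (Lem.\,\ref{lem:canonical-character}) and the maximum ${}^\circ T^1$-weight that can occur on $\wedge^\bullet \p^1_c$.

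For the multiplicity-one assertion, I would first observe that the vector $v_0 := \bigwedge_{\alpha \in \Phi^+} X^p_\alpha \in \wedge^{n(n-1)/2} \u_c$ has ${}^\circ T^1$-weight $2\bfgreek{rho} = \chi_0$. Since no ${}^\circ T^1$-weight of $\wedge^\bullet \u_c$ exceeds $2\bfgreek{rho}$ by Lem.\,\ref{lem:characters-wedge-u}, every positive-root operator $X^k_\alpha \in \k^1_c$ must annihilate $v_0$ for weight reasons; hence the $\k^1_c$-submodule generated by $v_0$ inside $\wedge^{n(n-1)/2} \u_c$ is an irreducible copy of $\pi_{\chi_0}$. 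Consequently every ${}^\circ T^1$-weight of $\pi_{\chi_0}$ is a weight of $\wedge^\bullet \u_c$. Invoking Lem.\,\ref{lem:canonical-character}, the only common weight of $\pi_{\chi_0}|_{{}^\circ T^1}$ and $\chi^1_\blambda \otimes \M^1_\blambda|_{{}^\circ T^1}$ is $\chi_0$, and it occurs with multiplicity one on each side. Frobenius reciprocity then yields
$$
\dim \Hom_{K^1}\!\left(\pi_{\chi_0},\,\Ind_{{}^\circ T^1}^{K^1}(\chi^1_{\blambda}\otimes \M^1_{\blambda}|_{{}^\circ T^1})\right) \ = \ \sum_{\gamma} m^{\pi_{\chi_0}}_\gamma\, m^{\blambda}_\gamma \ = \ m^{\pi_{\chi_0}}_{\chi_0}\cdot m^{\blambda}_{\chi_0} \ = \ 1.
$$

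To pass to the equality of Hom spaces, I would use this multiplicity-one statement to decompose $\Ind_{{}^\circ T^1}^{K^1}(\chi^1_\blambda \otimes \M^1_\blambda|_{{}^\circ T^1}) \simeq \pi_{\chi_0} \oplus W$ as $K^1$-modules, with $W$ having no $\pi_{\chi_0}$-component, and then show $\Hom_{K^1}(\wedge^q \p^1_c, W) = 0$. Suppose for contradiction that some irreducible $K^1$-subrepresentation $\pi \subset W$ embedded into $\wedge^q \p^1_c$. Since $\pi \subset \Ind$, Frobenius reciprocity forces $\pi|_{{}^\circ T^1}$ and $\chi^1_\blambda \otimes \M^1_\blambda|_{{}^\circ T^1}$ to share a ${}^\circ T^1$-character; and since all weights of $\pi$ are weights of $\wedge^q \p^1_c$, whose ${}^\circ T^1$-spectrum agrees with that of $\wedge^\bullet \u_c$, Lem.\,\ref{lem:canonical-character} pins this shared weight to be $\chi_0$. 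Letting $\mu$ be the highest weight of $\pi$, one has $\mu \geq \chi_0$ (because $\chi_0$ is a weight of $\pi$) and $\mu \leq 2\bfgreek{rho} = \chi_0$ (because $\mu$ is a weight of $\wedge^q \p^1_c$, by Lem.\,\ref{lem:characters-wedge-u}), whence $\mu = \chi_0$ and $\pi \simeq \pi_{\chi_0}$, contradicting $\pi \subset W$.

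The step I expect to require the most care is the opening embedding $\pi_{\chi_0} \hookrightarrow \wedge^{n(n-1)/2} \u_c$: this is what upgrades Lem.\,\ref{lem:canonical-character} from a statement about torus characters to a statement about full irreducible $K^1$-types, and it is what actually cuts down the Hom space to $\pi_{\chi_0}$ alone. Once this embedding is in hand, the remainder is essentially bookkeeping with Frobenius reciprocity and the partial-order bound of Lem.\,\ref{lem:characters-wedge-u}. The overall picture is in the spirit of the Vogan--Zuckerman analysis of canonical $K$-types supporting $(\g, K)$-cohomology cited just before the lemma.
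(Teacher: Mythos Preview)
Your argument is correct and rests on the same core mechanism as the paper's: Frobenius reciprocity together with the squeeze $\chi_0 \leq \mu \leq 2\bfgreek{rho} = \chi_0$ coming from Lem.\,\ref{lem:characters-wedge-u} and Lem.\,\ref{lem:canonical-character}. The organization differs slightly. The paper first invokes Lem.\,\ref{lem:K-type-inducing-chi-0} to replace $\Ind_{{}^\circ T^1}^{K^1}(\chi^1_{\blambda}\otimes\M^1_{\blambda}|_{{}^\circ T^1})$ by $\Ind_{{}^\circ T^1}^{K^1}(\chi_0)$, decomposes the latter into isotypic pieces, and then runs the squeeze to single out $\pi_{\chi_0}$; multiplicity one is read off at the end. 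You instead first build the explicit embedding $\pi_{\chi_0}\hookrightarrow \wedge^{n(n-1)/2}\u_c$ via the highest-weight vector $\bigwedge_{\alpha\in\Phi^+}X_\alpha^p$, use it to pin down multiplicity one directly, and only then prove the $\Hom$-space equality by eliminating any other $K^1$-constituent of the induced module. Your route bypasses Lem.\,\ref{lem:K-type-inducing-chi-0} at the cost of the extra embedding step; the paper's route is marginally cleaner bookkeeping but leans on that auxiliary lemma. Either way the decisive input is the same weight-extremality argument.
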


\begin{proof}
If $\pi_\chi$, the irreducible representation of $K^1$ with 
highest weight $\chi,$ appears in $\wedge^q \p^1_c$ then from Lem.\,\ref{lem:characters-wedge-u} one has: 
\begin{equation}
\label{eqn:restriction-chi}
- 2 \bfgreek{rho} \ \leq \ \chi \ \leq \ 2\bfgreek{rho}. 
\end{equation}
(Recall: $\chi_0 = 2 \bfgreek{rho}.$) 
Next, for any unitary representation $\Pi$ of $K^1$ one has the Hilbert-space decomposition into its isotypic components: 
$\Pi \ = \ \bigoplus_{\chi \in \Lambda} \pi_\chi \otimes \Hom_{K^1}(\pi_\chi, \Pi),$
where $\chi$ runs over the set $\Lambda$ of all dominant integral weights. Using Lem.\,\ref{lem:K-type-inducing-chi-0} one deduces
$$
\Hom_{K^1}(\wedge^q \p^1_c, \, \Ind_{{}^\circ T^1}^{K^1}(\chi^1_{\blambda} \otimes \M^1_{\blambda}|_{{}^\circ T^1})) \ = \ 
\Hom_{K^1}(\wedge^q \p^1_c, \, \bigoplus_{\chi \in \Lambda} \pi^1_\chi \otimes \Hom_{K^1}(\pi^1_\chi, \Ind_{{}^\circ T^1}^{K^1}(\chi_0)).
$$
From \eqref{eqn:restriction-chi} and Frobenius reciprocity, the space on the right hand side is equal to
$$
\bigoplus_{- 2\bfgreek{rho} \leq  \chi \leq  2\bfgreek{rho}} 
\Hom_{K^1}(\wedge^q \p^1_c, \, \pi^1_\chi) \otimes \Hom_{{}^\circ T^1}(\pi^1_\chi, \chi_0).
$$
If $\Hom_{{}^\circ T^1}(\pi^1_\chi, \chi_0) \neq 0$ then $\chi_0 \leq \chi$, hence only the summand for $\chi = \chi_0$ survives, and furthermore $\Hom_{{}^\circ T^1}(\pi^1_{\chi_0}, \chi_0)$ is one-dimensional by usual highest-weight theory; whence  
$$
\Hom_{K^1}(\wedge^q \p^1_c, \, \Ind_{{}^\circ T^1}^{K^1}(\chi^1_{\blambda} \otimes \M^1_{\blambda}|_{{}^\circ T^1})) = \Hom_{K^1}(\wedge^q \p^1_c, \, \pi_{\chi_0}).
$$
This also shows that $\pi_{\chi_0}$ appears with multiplicity one in $\Ind_{{}^\circ T^1}^{K^1}(\chi^1_{\blambda} \otimes \M^1_{\blambda}|_{{}^\circ T^1})$ as the above equality means that 
any $K^1$-equivariant map from $\wedge^q \p^1_c$ to $\Ind_{{}^\circ T^1}^{K^1}(\chi^1_{\blambda} \otimes \M^1_{\blambda}|_{{}^\circ T^1})$ factors as 
$\wedge^q \p^1_c \ \twoheadrightarrow \ \pi_{\chi_0} \ \hookrightarrow \ \Ind_{{}^\circ T^1}^{K^1}(\chi^1_{\blambda} \otimes \M^1_{\blambda}|_{{}^\circ T^1}).$
\end{proof}

\medskip
\paragraph{\it The main result on Lefschetz numbers for $\SL_n(\C)$}

Suppose $V_{\pi_{\chi_0}}$ is the subspace of the induced space $\Ind_{{}^\circ T^1}^{K^1}(\chi^1_{\blambda} \otimes \M^1_{\blambda}|_{{}^\circ T^1})$ realizing the unique 
occurrence of $\pi_{\chi_0}$, then 
the intertwining operator $I$ from \eqref{eqn:int-op} has to stabilize $V_{\pi_{\chi_0}}$. By Schur's lemma, $I$ acts on $V_{\pi_{\chi_0}}$ as homothety by a 
scalar $c_{\chi_0}$, say. Since $I$ is an isomorphism, $c_{\chi_0} \neq 0.$

\medskip
\begin{prop}
\label{prop:slnc-lefschetz}
$$
\Lef(\theta, \SL_n(\C), \J^1_{\blambda} \otimes \M^1_{\blambda}) \ = \ c_{\chi_0} 2^{n-1}  \ \neq \ 0.
$$
\end{prop}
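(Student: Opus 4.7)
The plan is to compute $\Lef(\theta, \SL_n(\C), \J^1_{\blambda} \otimes \M^1_{\blambda})$ by explicitly identifying the induced operator $\theta^q$ on each cohomology group
$H^q(\g^1, K^1; \J^1_{\blambda} \otimes \M^1_{\blambda})$ as a scalar multiple of the identity, then summing with signs. The preceding lemmas already pin down the structure of $H^q$ essentially completely; what remains is to track how $\theta$ (acting through $I$) moves through that structure.

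First I would combine Lem.\,\ref{lem:coh-j-lambda}, the K\"unneth-Frobenius decomposition \eqref{eqn:kunneth-frobenius}, Lem.\,\ref{lem:canonical-character}, and Lem.\,\ref{lem:canonical-K-type-coh} to rewrite
\[
H^q(\g^1, K^1; \J^1_{\blambda} \otimes \M^1_{\blambda}) \ = \ \wedge^{q - n(n-1)/2} \a_c^{1*} \otimes \Hom_{K^1}\bigl(\pi_{\chi_0}, \J^1_{\blambda} \otimes \M^1_{\blambda}\bigr),
\]
where the second factor is one-dimensional (the canonical $K$-type $V_{\pi_{\chi_0}}$ appears with multiplicity one). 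In particular every class is represented by a $K^1$-equivariant map $f : \wedge^q \p^1_c \to \J^1_{\blambda} \otimes \M^1_{\blambda}$ whose image lies in $V_{\pi_{\chi_0}}$.

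Next I would compute $\theta^q$ on each such $f$. By definition $\theta^q(f)(X) = I(f(\theta(X)))$. Because $\p^1$ is the $-1$-eigenspace of the Cartan involution, $\theta$ acts as $-1$ on $\p^1_c$, hence as $(-1)^q$ on $\wedge^q \p^1_c$. On the target side the image of $f$ lies in $V_{\pi_{\chi_0}}$, where $I$ acts by the nonzero scalar $c_{\chi_0}$ defined just above the proposition. Consequently
\[
\theta^q(f)(X) \ = \ I\bigl(f(\theta(X))\bigr) \ = \ (-1)^q\, I\bigl(f(X)\bigr) \ = \ (-1)^q c_{\chi_0}\, f(X),
\]
so $\theta^q$ acts as $(-1)^q c_{\chi_0} \cdot \Id$ on the $\binom{n-1}{q - n(n-1)/2}$-dimensional space $H^q$.

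Finally I would assemble the Lefschetz sum. Using the dimension formula from Lem.\,\ref{lem:coh-j-lambda},
\[
\Lef(\theta, \SL_n(\C), \J^1_{\blambda} \otimes \M^1_{\blambda}) \ = \ \sum_{q} (-1)^q \cdot (-1)^q c_{\chi_0} \binom{n-1}{q - n(n-1)/2} \ = \ c_{\chi_0} \sum_{k=0}^{n-1} \binom{n-1}{k} \ = \ c_{\chi_0}\, 2^{n-1},
\]
which is nonzero since $c_{\chi_0} \neq 0$. The main conceptual obstacle is confirming that $\theta^q$ really is a scalar and not merely upper-triangular with those diagonal entries; this is guaranteed because every cohomology class factors through the single $K^1$-type $V_{\pi_{\chi_0}}$ on which $I$ is homothety by Schur's lemma. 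The only other point requiring a small check is that the operator $I$ genuinely descends to cohomology, i.e., commutes with the relevant differential, which follows from the intertwining properties in Lem.\,\ref{lem:I_0} and Lem.\,\ref{lem:I_infty} together with the $\theta$-equivariance of the adjoint action on $\g^1/\k^1$.
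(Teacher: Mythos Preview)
Your proof is correct and follows essentially the same approach as the paper: identify $H^q$ with $\Hom_{K^1}(\wedge^q \p^1_c, \pi_{\chi_0})$ via Lem.\,\ref{lem:canonical-K-type-coh}, observe that $\theta$ acts as $(-1)^q$ on $\wedge^q\p^1_c$ while $I$ acts as $c_{\chi_0}$ on $V_{\pi_{\chi_0}}$, and sum the resulting traces $(-1)^q c_{\chi_0}\binom{n-1}{q-n(n-1)/2}$ with the alternating sign. Your extra remarks (the explicit K\"unneth--Frobenius rewrite and the observation that $I$ descends to cohomology) are sound embellishments but not departures from the paper's line of argument.
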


\begin{proof}
Recall the map $\theta^q$ of $H^q(\g^1, K^1; \, \J^1_{\blambda} \otimes \M^1_{\blambda})$ which on $f \in \Hom_{K^1}(\wedge^q \p^1_c, \, \J^1_{\blambda} \otimes \M^1_{\blambda})$ gives  
$\theta^q(f)$ where $\theta^q(f)(X) = I(f(\theta(X))$ for all $X \in \wedge^q \p^1_c.$ Now, using Lem.\,\ref{lem:canonical-K-type-coh} and that $\theta$ acts as $-1$ on $\p^1_c$ 
one deduces that $\theta^q$ acts on $\Hom_{K^1}(\wedge^q \p^1_c, \, \pi_{\chi_0})$ as a homothety by $(-1)^q c_{\chi_0}$. Lem.\,\ref{lem:coh-j-lambda} implies that
$
{\rm Trace}(\theta^q) \ = \ (-1)^q c_{\chi_0} \binom{n-1}{q - n(n-1)/2}.$
Hence $\Lef(\theta, \SL_n(\C), \J^1_{\blambda} \otimes \M^1_{\blambda}) = \sum_q (-1)^q {\rm Trace}(\theta^q) = c_{\chi_0} \sum_q \binom{n-1}{q - n(n-1)/2} = c_{\chi_0}2^{n-1}.$
\end{proof}

\medskip
\subsubsection{Lefschetz number for $\SL_n(\R)$}
\label{sec:slnr}

We will now prove the analogue of Prop.\,\ref{prop:slnc-lefschetz} for $\SL_n(\R)$. The proofs are very similar after making appropriate changes and so we will be brief.

\medskip
\paragraph{\it Notations}
\label{par:notations-slnr}

For all of \ref{sec:slnr}, let $G= \GL_n(\R)$, and $G^1 = \SL_n(\R)$; let $K = \rO(n)$ (resp., $K^1 = \SO(n)$) be the maximal compact subgroup of $G$ (resp., $G^1).$ Let $\theta$ be the involution on $G$ defined as 
$\theta(g)={}^tg^{-1}$ for $g \in G.$ The set of fixed points of $\theta$ in $G^1$ is $K^1.$ Let $\g = \gl_n(\R)$ (resp., $\g^1 = \sl_n(\R)$) be the Lie algebra of the Lie group $G$ (resp., $G^1$) on which the action of $\theta$, which we continue to denote by $\theta$, is $\theta(X) = - {}^t \! X$ for $X \in \g.$ On $\g^1$, the $+1$ eigenspace of $\theta$ is $\k^1$, the Lie algebra of $K^1,$ 
consisting of all skew-symmetric matrices in $\g$ or $\g^1$, and the $-1$ eigenspace of $\theta$ is denoted $\p^1$ which consists of all symmetric matrices in $\g^1$; one has $\g^1 = \k^1 \oplus \p^1,$ a decomposition of real Lie algebras.  Denote by $\g^1_c$, $\k^1_c$, and $\p^1_c$ the complexifications of $\g^1$, $\k^1,$ and $\p^1,$ respectively. Identify $\g^1_c/\k^1_c$ with $\p^1_c$.
Let $T=\{{\rm diag} (x_1,\ldots,x_n)\in G: x_i \in \R^*\}$ be the diagonal torus in $G$; let $\Phi$ be the root system identified with tuples $1\leq i\neq j\leq n$ in the usual way, with $\Phi^+$ the positive roots corresponding to $i < j$. Let $\cW$ be the Weyl group $N_G(T)/T$.

\medskip
\paragraph{\it Pure weight and finite-dimensional representations} 

Let $\lambda=(b_1,\ldots,b_n) \in \Z^n$ with $b_1 \geq b_2 \geq \dots \geq b_n$ be a dominant integral weight for $G$. Assume that $\lambda$ satisfies the purity condition 
$b_i + b_{n-i+1} = \w$ for all $1 \leq i \leq n.$ 
Let $(\rho_{\lambda},\mathcal{M}_{\lambda})$ be the finite-dimensional irreducible (complex-)representation of $G$ with the highest weight $\lambda$; its restriction to $G^1$, 
denoted $(\rho^1_{\lambda},\mathcal{M}^1_{\lambda})$ is irreducible and has highest weight $\lambda^1$, i.e., 
$(\rho_{\lambda^1},\mathcal{M}_{\lambda^1}) = (\rho^1_{\lambda},\mathcal{M}^1_{\lambda})$.

\begin{lemma}
\label{lem:I_0-SLnR}
 There exists an isomorphism $I_0 : \mathcal{M}^1_{\lambda} \to \mathcal{M}^1_{\lambda}$ with the intertwining property: 
    $$ 
    I_0 \circ \rho^1_{\lambda}(g) = \rho^1_{\lambda}(\theta(g)) \circ I_0, \quad \forall g \in \SL_n(\mathbb{R}).
    $$
\end{lemma}

Follows from purity as in the discussion preceding Lem.\,\ref{lem:I_0}; the details are left to the reader.

\medskip
\paragraph{\it Unitary representation of $G^1$} 

For any integer $l\geq 1$, let $D_l$ be the discrete series representation of $\GL_2(\mathbb{R})$ with lowest non-negaive $\SO(2)$ type $e^{\iota \theta} \mapsto e^{\iota (l+1)\theta}$ and 
central character trivial on the connected component of the center of $\GL_2(\mathbb{R})$. The restriction of $D_l$ to $\SO(2)$ is 
$\oplus_k V_k,$ with the sum running over all integers $k,$ with $k \equiv l+1 \pmod{2}$ and  
$|k|\geq l+1$, where $V_k$ is the one-dimensional $\SO(2)$-type given by the character $e^{\iota \theta} \mapsto e^{\iota k\theta}$.
Suppose $\SL_q^\pm(\R)$ denotes the subgroup of $\GL_q(\R)$ consisting of all 
$q \times q$ matrices with determinant $\pm 1,$ then the restriction of $D_l$ to $\SL_2^\pm(\R)$ is irreducible. 
Define the cuspidal parameters $(l_1,l_2,\dots, l_n)$ for the pure weight $\lambda$ as $l_i =b_i-b_{n-i+1}+(n-2i+1)$. 
Let $P_{\ul n}$ be the standard parabolic subgroup of $G$ of all upper-triangular matrices corresponding to the partition 
$\ul{n}$ of $n$ given by $2 + \dots + 2$ or $n = 2 + \dots + 2 + 1$, depending on whether $n$ is even or odd.
Let $M_{\ul{n}} = {}^\circ M_{\ul{n}} \cdot A_{\ul{n}}$ denote the Langlands decomposition of its standard Levi subgroup $M_{\ul{n}}$; the connected component of the center of $M_{\ul{n}}$ is $A_{\ul{n}}$. Then 
$M_{\ul{n}} = \GL_2(\R) \times \dots \times \GL_2(\R) (\times \GL_1(\R))$, the notation means that the last $\GL_1(\R)$ factor is present only when $n$ is odd, and 
$$
{}^\circ M_{\ul{n}} = \SL_2^{\pm}(\R) \times \dots \times \SL_2^{\pm}(\R) (\times \SL_1^{\pm}(\R)).
$$
Put ${}^\circ D_\lambda = D_{l_1} \otimes \dots \otimes D_{l_{[n/2]}} (\otimes 1\!\!1)$ as a representation of ${}^\circ M_{\ul{n}}$, where the last factor $1\!\!1$ is the trivial character of 
$\SL_1^{\pm}(\R) =  \{\pm 1\}$ which is present when $n$ is odd. 
Let $M_{\ul{n}}^1 = M_{\ul{n}} \cap G^1$, then 
\begin{equation}
\label{eqn:M-n-1}
M_{\ul{n}}^1 \simeq 
\begin{cases} 
\GL_2(\R) \times \dots \times \GL_2(\R) & \mbox{if $n$ is odd,} \\
{\rm S}(\GL_2(\R) \times \dots \times \GL_2(\R)) & \mbox{if $n$ is even.} 
\end{cases}
\end{equation}
For the parabolic subgroup $P_{\ul{n}}^1 = P_{\ul{n}} \cap G^1$ of $G^1$, then 
$M_{\ul{n}}^1 = {}^\circ M_{\ul{n}}^1 \cdot A_{\ul{n}}^1$ is the Langlands decomposition of its standard Levi subgroup, where 
$A_{\ul{n}}^1 = A_{\ul{n}} \cap G^1$ and ${}^\circ M_{\ul{n}}^1 = {}^\circ M_{\ul{n}} \cap G^1$. Observe that 
$$
{}^\circ M_{\ul{n}}^1 \simeq 
\begin{cases} 
\SL_2^{\pm}(\R) \times \dots \times \SL_2^{\pm}(\R) & \mbox{if $n$ is odd,} \\
{\rm S}(\SL_2^{\pm}(\R) \times \dots \times \SL_2^{\pm}(\R)) & \mbox{if $n$ is even.} 
\end{cases}
$$
Let ${}^\circ D_\lambda^1$ be the restriction of ${}^\circ D_\lambda$ to ${}^\circ M_{\ul{n}}^1$; this is irreducible if $n$ is odd, and 
is a sum of two inequivalent irreducible representations if $n$ is even. To homogenize notation with \ref{sec:slnc}, let $\chi_\lambda^1$ denote an irreducible summand of 
${}^\circ D_\lambda^1$. 
 After extending $\chi_\lambda^1$ trivially across 
$A_{\ul{n}}^1$, consider the parabolically induced representation:
$$
(\psi_{\lambda}^1, \J_{\lambda}^1)\ := \ \operatorname{Ind}_{P_{\ul{n}}^1}^{G^1} (\chi_\lambda^1).
$$ 

\begin{lemma}
\label{lem:I_infty-slnr}
    There exists an isomorphism 
    $I_\infty : \operatorname{Ind}_{P_{\ul{n}}^1}^{G^1} (\chi_\lambda^1) \rightarrow \operatorname{Ind}_{P_{\ul{n}}^1}^{G^1} (\chi_\lambda^1)$ 
with the intertwining property: $I_\infty \circ \psi_{\lambda}(g) = \psi_{\lambda}(\theta(g)) \circ I_\infty$ for all $g \in G^1.$
\end{lemma}

This is as in Lem.\,\ref{lem:I_infty}, together with the observation that when $n$ is even, $\theta$ stabilizes each irreducible summand of 
the restriction of ${}^\circ D_\lambda$ to ${}^\circ M_{\ul{n}}^1$, in particular, it stabilizes $\chi_\lambda^1$.

\medskip
\paragraph{\it Cohomology of $\J_\lambda^1 \otimes \M_\lambda^1$}

The following lemma is contained in Clozel \cite[Lemme 3.14]{clozel}.

\begin{lemma}
\label{lem:dimension-slnr}
Let $b_n = m^2$ if $n=2m$, and $b_n = m(m+1)$ if $n=2m+1$. Then 
 $$
 H^q(\mathfrak{g}^1,K^1, \J_\lambda^1 \otimes \M_\lambda^1) \ \cong \ 
 \wedge^{(q- b_n)} \mathfrak{a}_{\ul{n},c}^{1*} \ \cong \ 
 \begin{cases} 
 	\wedge^{q-m^2} (\mathbb{C}^{m-1} ) & n=2m, \\ 
	 \wedge^{q-m(m+1)} (\mathbb{C}^{m}) & n=2m+1. \end{cases}
 $$ 
\end{lemma}
It is helpful to note $b_n = \lfloor n^2/4 \rfloor$ and $\dim(\a_{\ul{n},c}^1) = \lfloor (n-1)/2 \rfloor.$

\medskip
\paragraph{\it Lefschetz number to compute}

The operator $I=I_{\infty}\otimes I_0$ defines an isomorphism on $\J_{\lambda}^1 \otimes \M_{\lambda}^1$ such that 
$I \circ \big(\psi_{\lambda}^1 \otimes\rho_{\lambda}^1 \big)(g) = \big(\psi_{\lambda}^1 \otimes\rho_{\lambda}^1\big)(\theta(g)) \circ I,$ for all $g \in G^1.$
Since $\theta(k)=k$ for all $k\in K^1$, the operator $I$ is a self-intertwining map of $\psi_{\lambda}^1 \otimes\rho_{\lambda}^1$ as a $K^1$-module. 
The map $\theta^q$ induced by $\theta$ on the cohomology group 
$H^q(\mathfrak{g}^1,K^1,\J_{\lambda}^1 \otimes \M_{\lambda}^1)\cong \operatorname{Hom}_{K^1}(\wedge^q \g^1/\k^1, \J_{\lambda}^1 \otimes \M_{\lambda}^1)$ is given by $f\mapsto \big(\theta^qf\big)(X):=I[f\big(\theta(X)\big)]$ for all $X\in \wedge^q \g^1/\k^1.$  
To compute $\operatorname{Lef}(\theta, \SL_n(\mathbb{R}),\J_{\lambda}^1 \otimes \M_{\lambda}^1)$, we need to compute 
$\operatorname{Trace}(\theta^q)$.

\medskip
\paragraph{\it Relevant $K^1$-types of $\J_\lambda^1 \otimes \M_\lambda^1$}

By \cite[Prop.\,9.4.3]{wallach}, the differential computing relative Lie algebra cohomology in the context at hand is zero, i.e., 
$$
H^q(\mathfrak{g}^1, K^1, \, \J_{\lambda}^1 \otimes \M_{\lambda}^1) \cong 
\Hom_{K^1}(\wedge^q(\g^1/\k^1), \, \J_{\lambda}^1 \otimes \M_{\lambda}^1).
$$
The cohomology depends only on $K^1$-types appearing in $\wedge^q \p^1$ and $\J_{\lambda}^1 \otimes \M_{\lambda}^1$. The restriction to $K^1$ of  
$\J_{\lambda}^1 \otimes \M_{\lambda}^1$ is identified with $\Ind_{K^1 \cap P_{\ul{n}}^1}^{K^1}(\chi_{\lambda}^1 \otimes \M_{\lambda}^1).$ 
By Frobenius reciprocity: 
$$
H^q(\mathfrak{g}^1, K^1, \, \J_{\lambda}^1 \otimes \M_{\lambda}^1) \ \cong \ 
\Hom_{K^1}(\wedge^q\mathfrak{p}^1_c, \, \Ind_{K^1\cap P_{\ul{n}}^1}^{K^1}(\chi_{\lambda}^1 \otimes \M_{\lambda}^1)) \ \cong \ 
\Hom_{K^1 \cap P_{\ul{n}}^1} (\wedge^q\mathfrak{p}^1_c, \, \chi_{\lambda}^1 \otimes \M_{\lambda}^1).
$$
Let $K_{\ul{n}}^1 := K^1 \cap P_{\ul{n}}^1 = K^1 \cap M_{\ul{n}}^1.$ From \eqref{eqn:M-n-1}, one has 
$K_{\ul{n}}^1 = {\rm S}({\rm O}(2) \times \cdots \times {\rm O}(2))$. Its connected component of the identity is 
$K_{\ul{n}}^{\circ} = \SO(2)\times \cdots \times \SO(2)$ which is the maximal torus of the compact group $K^1 = \SO(n)$. Hence, 
$$
H^q(\mathfrak{g}^1, K^1, \, \J_{\lambda}^1 \otimes \M_{\lambda}^1) \ \cong \ 
\Hom_{K_{\ul{n}}^1} (\wedge^q\mathfrak{p}^1_c, \, \chi_{\lambda}^1 \otimes \M_{\lambda}^1) \ \subset \ 
\Hom_{K_{\ul{n}}^\circ} (\wedge^q\mathfrak{p}^1_c, \, \chi_{\lambda}^1 \otimes \M_{\lambda}^1).
$$
The inclusion is in fact an equality as will become clear by the end of this discussion. As a $K_{\ul{n}}^\circ$-module, we have a splitting:
$\mathfrak{p}^1_c = \a_{\ul{n},c}^1 \oplus \u_{\ul{n},c}$, with the adjoint action of $K_{\ul{n}}^\circ$ on $\a_{\ul{n},c}^1$ being trivial. 
It is helpful to note that their dimensions are 
$\dim(\p^1_c) = n(n-1)/2 + (n-1), \ \dim(\a_{\ul{n},c}^1) = \lfloor (n-1)/2 \rfloor, \  
\dim(\u_{\ul{n},c}) = \lfloor n^2/2 \rfloor.$ Furthermore, $b_n = \dim(\u_{\ul{n},c})/2.$
\begin{equation}
\label{eqn:kunneth-frobenius-2}
H^q(\mathfrak{g}^1, K^1, \, \J_{\lambda}^1 \otimes \M_{\lambda}^1) \ \subset \
\bigoplus_{s+t = q} \wedge^s (\a_{\ul{n},c}^{1*}) \otimes \Hom_{K_{\ul{n}}^\circ} (\wedge^t \u_{\ul{n},c}, \, \chi_{\lambda}^1 \otimes \M_{\lambda}^1).
\end{equation}

\medskip
\paragraph{\it Relevant $K_{\ul{n}}^\circ$-types of $\chi_\lambda^{1} \otimes \M_\lambda^1$}

From \eqref{eqn:kunneth-frobenius-2} and the statement about dimension in Lem.\,\ref{lem:dimension-slnr} we first of all deduce 
that the inclusion in \eqref{eqn:kunneth-frobenius-2} is an equality and furthermore we deduce the following:  

\begin{lemma}
\label{lem:canonical-character-slnr}
$$
\Hom_{K_{\ul{n}}^\circ} (\wedge^t \u_{\ul{n},c}, \, \chi_{\lambda}^1 \otimes \M_{\lambda}^1)
\ = \ 
\begin{cases}
\C & \mbox{if $t = \lfloor n^2/4 \rfloor = b_n$,} \\
0 & \mbox{otherwise}, 
\end{cases}
$$
i.e., there is a unique character of the compact torus $K_{\ul{n}}^\circ$ denoted, say, $\chi_0$, that appears in $\wedge^\bullet \u_{\ul{n},c}$ and in 
$\chi^1_{\blambda} \otimes \M^1_{\blambda}$;  necessarily, it appears in a unique degree; this degree is $\bullet = b_n = \dim(\u_{\ul{n},c})/2$; 
furthermore, it appears with multiplicity one in $\wedge^{\lfloor n^2/4 \rfloor}\u_{\ul{n},c}$ and in 
$\chi^1_{\blambda} \otimes \M^1_{\blambda}$. 
\end{lemma}

\medskip
\paragraph{\it Mutatis mutandis for the main result on Lefschetz numbers for $\SL_n(\R)$}

After the above discussion, and an application of Frobenius reciprocity, we have
$$
H^q(\mathfrak{g}^1, K^1, \, \J_{\lambda}^1 \otimes \M_{\lambda}^1) \ \cong \ 
\Hom_{K_{\ul{n}}^\circ} (\wedge^q\mathfrak{p}^1_c, \, \chi_{\lambda}^1 \otimes \M_{\lambda}^1)
 \ \cong \ 
\Hom_{K^1} (\wedge^q\mathfrak{p}^1_c, \, \Ind_{K_{\ul{n}}^\circ}^{K^1}(\chi_{\lambda}^1 \otimes \M_{\lambda}^1)).
$$
From here on the reasoning is identical to the case of $\SL_n(\C)$ as in \ref{sec:slnc}; one sees that any $K^1$ equivariant map as above has to factor as
$$
\wedge^q \p^1_c \ \twoheadrightarrow \ \pi_{\chi_0} \ \hookrightarrow \ \Ind_{K_{\ul{n}}^\circ}^{K^1}(\chi_{\lambda}^1 \otimes \M_{\lambda}^1), 
$$
where $\pi_{\chi_0}$ is the irreducible representation of $K^1$ of highest weight $\chi_0.$ Furthermore, $\pi_{\chi_0}$ appears in 
$\J_{\lambda}^1 \otimes\mathcal{M}_{\lambda}^1$ with multiplicity one. (For the reader familiar with Vogan--Zuckerman \cite{vogan-zuckerman}, our $\chi_0$ is their $2\rho(\u \cap \p)$, and our $\pi_{\chi_0}$ is their 
$\mu(\mathfrak{q})$; see \cite[(2.4)]{vogan-zuckerman}. The reader may see the first author's doctoral 
dissertation\footnote{{\SMALL Nasit Darshan, {\it Cuspidal cohomology for $\GL(N)$ over number fields,} Ph.D.\ thesis (2024), IISER Pune, India.}}
for more details.) 

\begin{prop}
\label{prop:slnr-lefschetz}
There is a nonzero constant $c_{\chi_0}$ such that 
$$
\operatorname{Lef}(\theta, \SL_n(\mathbb{R}), \J_{\lambda}^1 \otimes \M_{\lambda}^1) \ = \ 
\begin{cases}
c_{\chi_0}2^{m-1}\neq 0 & n=2m, \\ c_{\chi_0}2^m \neq 0 & n=2m+1.
\end{cases}
$$
\end{prop}

The proof is exactly as in the proof of Prop.\,\ref{prop:slnc-lefschetz}.

\medskip
\subsubsection{\bf Applying Borel--Labesse--Schwermer to $\SL_n$ over $F$}
\label{sec:slnF}
We revert to the global notations as in Sect.\,\ref{sec:main_theorem_statement}. 
Recall the hypothesis of Thm.\,\ref{thm:main-sln} that $F$ is a number field which is Galois over its maximal totally real subfield $F_0$. 
Recall also from Sect.\,\ref{sec:base-field} that $F$ is contained in $\C$. 
Let $\c$ denote complex conjugation as an element of $\Gal(F/F_0).$ (If $F$ is not assumed to be a subfield of $\C$, then we can fix an embedding of 
$F$ to $\C$, and borrow complex conjugation via this embedding, and then show that the entire discussion is independent of this choice of embedding; such an 
exercise in book-keeping with added baggage of notation does not add any insight into the problem of nonvanishing of cuspidal cohomology.) 
Then $\c$ gives a rational automorphism of $G^1 = \Res_{F/\Q}(\SL_n/F)$; see, for example, 
Platonov--Rapinchuk \cite[Chap.\,2]{platonov-rapinchuk}. Let $\vartheta$ be the automorphism of $\SL_n/F$ defined as $\vartheta(g) = {}^t g^{-1}.$ 
Consider the automorphism 
$$
\alpha \ := \ \Res_{F/\Q}(\vartheta) \circ \c
$$
of $\Res_{F/\Q}(\SL_n/F)$. We contend that this $\alpha$ can be used in Thm.\,\ref{thm:bls}.

\medskip
\paragraph{\it The action of $\c$ on $G^1(\R)$}
\label{sec:action-of-c}
To begin, one needs to understand the action of complex conjugation as a Galois element $\c \in \Gal(F/F_0)$ on $G^1(\R)$.
The first step is to understand the action of $\c$ on $F \otimes \R$, which naturally falls into two cases depending on whether $F$ contains a CM subfield or not. 

\medskip
{\bf Case - 1}: $F$ is in the {\bf CM}-case, by which we mean $F$ contains a CM subfield; in this case, $F_0$ is the largest totally real subfield of $F$, and $F_1$ is a totally imaginary quadratic extension of $F_0$ contained in $F$; then $F_1$ is the largest CM subfield of $F$. Say, $[F_0 : \Q] = d$ and $[F:F_1] = k.$ Fix $\sigma \in \Sigma_{F_0}$; 
suppose $\{\nu, \bar\nu\} \subset \Sigma_{F_1}$ are the conjugate-embeddings of $F_1$ that restrict to $\sigma$, and furthermore, suppose, $\{\eta_1,\dots,\eta_k\}$ are all the embeddings in $\Sigma_F$ that restrict to $\nu$; necessarily, $\{\bar\eta_1,\dots, \bar\eta_k\}$ are all the embeddings that restrict to $\bar\nu$ on $F_1.$
\begin{claim}
Thinking of $\c$ as an element of $\Gal(F/F_0)$ one has $\eta_j \circ \c \in \{\bar\eta_1, \dots, \bar\eta_k\}.$
\end{claim}
The restriction of $\eta_j \circ \c$ to $F_0$ is $\sigma$; hence $\eta_j \circ \c \in \{\eta_1, \dots, \eta_k, \, \bar\eta_1, \dots, \bar\eta_k\}.$ If $\eta_j \circ \c = \eta_{j'}$ 
then restricting to $F_1$ we get $\nu(\c(x)) = \nu(x)$ for all $x \in F_1$ which is not possible; proving the claim. There is a permutation $\varpi_\sigma$ of 
$\{1,2,\dots,k\}$ such that $\eta_j \circ \c = \bar\eta_{\varpi_\sigma(j)}.$ Suppose we write 
$$
F \otimes_\Q \R \ = \  \prod_{\sigma \in \Sigma_{F_0}} F \otimes_{F_0, \sigma} \R \ \simeq \ \prod_{\sigma \in \Sigma_{F_0}} (\C \times \cdots \times \C),
$$
where one has $k$-factors of $\C$ above $\sigma$, with the isomorphisms of these $k$ completions of $F$ with $\C$ corresponding to $\eta_j$'s as being the distinguished embeddings for the pairs $\{\eta_j, \bar\eta_j\}$ of conjugate embeddings. If we write $\ul{z} \in F \otimes_\Q \R,$ with 
$\ul{z} = (\ul{z}^\sigma)_{\sigma \in \Sigma_{F_0}}$ and $\ul{z}^\sigma = (z^\sigma_1,\dots, z^\sigma_k)$ then the action of $\c$ as $\c \otimes 1$ on $\ul{z}$ is: 
$$
\c(\ul{z}) = (\c(\ul{z}^\sigma))_{\sigma \in \Sigma_{F_0}}; \quad 
\c(\ul{z}^\sigma) = \c(z^\sigma_1,\dots,z^\sigma_k) = (\bar{z}^\sigma_{\varpi_\sigma(1)},\dots, \bar{z}^\sigma_{\varpi_\sigma(k)} ), 
$$
i.e., $\c$ permutes and conjugates the entries above a given $\sigma.$ Similarly, if we write 
$$
G^1(\R) = \SL_n(F \otimes_\Q \R) = \prod_{\sigma \in \Sigma_{F_0}} \SL_n(F \otimes_{F_0, \sigma} \R ) 
 \ \simeq \ \prod_{\sigma \in \Sigma_{F_0}} (\SL_n(\C) \times \cdots \times \SL_n(\C)), 
$$
then the action of $\c$ on $G^1(\R)$ is via permuting and complex-conjugating the matrices in the $k$-fold product $\SL_n(\C) \times \cdots \times \SL_n(\C)$ 
above a given $\sigma.$

\medskip
{\bf Case - 2:} $F$ is in the {\bf TR}-case, by which we mean $F$ does not contain a CM subfield; we still have $F_0$ is the largest totally real subfield of $F$. 
Say, $[F_0 : \Q] = d$ and $[F:F_0] = k.$ Fix $\sigma \in \Sigma_{F_0}$; suppose 
$\rho_1,\dots, \rho_r, \, \nu_1, \dots, \nu_s, \, \bar{\nu}_1, \dots \bar{\nu}_s,$
are all the embeddings of $F$ into $\C$ that restrict to $\sigma$ on $F_0$; where, $\rho_i$'s are real embeddings, and $\nu_j,  \bar\nu_j$ are pairs of conjugate
embeddings;  of course $k = r + 2s.$ Clearly, $\rho_i \circ \c$ is again a real embedding, hence pre-composing by $\c$ permutes $\rho_1,\dots, \rho_r.$ Consider 
a pair $\{\eta, \bar\eta\}$ of conjugate embeddings over $\sigma$, then on pre-composing by $\c$ we have one of the following two cases:
	\begin{enumerate}
		\item[(i)] $\{\eta \circ \c, \bar\eta \circ \c\} = \{\eta, \bar\eta\}$; 
		\item[(ii)] $\{\eta \circ \c, \bar\eta \circ \c\} \cap \{\eta, \bar\eta\} = \emptyset$. 
	\end{enumerate}
In (i), necessarily, $\eta \circ \c = \bar\eta$. Say, (i) happens to $a$ many pairs of conjugate embeddings over a fixed $\sigma$, and 
(ii) happens to $2b$ many pairs of conjugate embeddings over a fixed $\sigma$; of course, $s = a + 2b$. Recall, that for any pair $\{\eta, \bar\eta\}$ corresponding 
to a complex place, the completion of $F$ at this place is identified with $\C$ via a non canonically chosen distinguished element in this pair. In (ii), arbitrarily fix an $\eta$ as the distinguished embedding in the pair $\{\eta,\bar\eta\}$; now fix $\bar\eta \circ \c$ as the distinguished element in the pair $\{\eta \circ \c, \bar\eta \circ \c\};$ this way of choosing distinguished elements is well-defined since $\overline{\bar\eta \circ \c} \circ \c = \eta.$ Now one has:
$F \otimes_\Q \R = \prod_{\sigma \in \Sigma_{F_0}} F \otimes_{F_0, \sigma} \R$, where 
$$
F \otimes_{F_0, \sigma} \R \ \simeq \ 
\R \times \cdots \times \R 
\ \times \ \C \times \cdots \times \C \ 
\ \times \ (\C \times \C) \times \cdots \times (\C \times \C),
$$
where there are $r$ factors of $\R,$ followed by $s$ factors of $\C$ grouped as $a$ copies of $\C$ followed by $b$ copies of $\C \times \C$. If 
$\xi \in F \otimes_{F_0, \sigma} \R$ is written as 
$$
\xi = (x_1,\dots, x_r; \, z_1,\dots,z_a; \, u_1, w_1, \dots, u_b, w_b)
$$
with $x_i \in \R$, $z_j \in \C$, and $u_l, w_l \in \C$, then the action of $\c$ on $\xi$ permutes the real entries, conjugates the $z_j$'s, and interchanges and conjugates the pairs $u_l, w_l$, i.e., 
$$
(\c \otimes 1)(\xi) = (x_{1'},\dots, x_{r'}; \, \bar{z}_1,\dots, \bar{z}_a; \,  \bar{w}_1, \bar{u}_1, \dots, \bar{w}_b, \bar{u}_b).
$$
The point relevant in the proof below is that that real entries are permuted and the complex entries are permuted and conjugated. 
This description applies just the same to elements of $G^1(\R) = \SL_n(F \otimes \R) = \prod_{\sigma \in \Sigma_{F_0}} \SL_n(F \otimes_{F_0, \sigma} \R)$; the action of 
$\c$ stabilizes $\SL_n(F \otimes_{F_0, \sigma} \R)$, and for an element  
$\ul{g} \in \SL_n(F \otimes_{F_0, \sigma} \R)$ written as 
$\ul{g} = (t_1,\dots, t_r; \, g_1, \dots, g_s)$
with $t_i \in \SL_n(\R)$ and $g_j \in \SL_n(\C)$, the action of $\c$ on $\ul{g}$ permutes the real matrices $t_i$, and permutes and conjugates the complex matrices $g_j.$

\medskip
\paragraph{\it The action of $\alpha$ on the coefficient system}

The following lemma needs strong-purity of the highest weight of the coefficient system. 

\begin{lemma}
\label{lem:alpha-M-lambda}
Let $\lambda \in X^+_{00}(\Res_{F/\Q}(T_0) \times E)$ be a strongly-pure weight, $\iota : E \to \C$ an embedding, and 
$(\rho_{{}^\iota\!\lambda},  \M_{{}^\iota\!\lambda})$ the representation of 
$G(\Q) = \GL_n(F)$ of highest weight ${}^\iota\lambda$. Its restriction to $G^1(\Q)$, 
is invariant under 
$\alpha$, i.e., ${}^\alpha\rho_{{}^\iota\!\lambda^1} \simeq \rho_{{}^\iota\!\lambda^1}.$
\end{lemma}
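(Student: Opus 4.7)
The plan is to unwind the definitions, move the action of $\alpha$ through the tensor product decomposition of $\M_{{}^\iota\!\lambda}$ indexed by embeddings $\eta : F \to \C$, and then recognize that the resulting rearrangement of highest weights is controlled exactly by the strong-purity hypothesis via condition (iii) of Prop.\,\ref{prop:strong-pure-E}.

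First, I would unwind $\alpha$ on $G^1(\Q) = \SL_n(F)$: for $a \in \SL_n(F)$, $\alpha(a) = {}^t\c(a)^{-1}$. Writing ${}^\iota\!\lambda = ({}^\iota\!\lambda^\eta)_{\eta : F \to \C}$ with ${}^\iota\!\lambda^\eta = (b_1^\eta \geq \cdots \geq b_n^\eta)$, and recalling that for an embedding $\eta : F \to \C$ we have $\eta \circ \c = \bar\eta$, the action of $\alpha(a)$ through $\rho_{{}^\iota\!\lambda}$ on a pure tensor decomposes as $\bigotimes_\eta \rho_{{}^\iota\!\lambda^\eta}\bigl({}^t\bar\eta(a)^{-1}\bigr)$. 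Applying \eqref{eqn:contragredient-equivalences} factor-by-factor, this is isomorphic, as a representation of $\SL_n(F)$, to $\bigotimes_\eta \rho_{-w_0({}^\iota\!\lambda^\eta)}(\bar\eta(a))$.

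Next, I would reindex the tensor product by $\eta \mapsto \bar\eta$ (this is a bijection of $\Sigma_F$ to itself), which converts the representation to $\bigotimes_\eta \rho_{-w_0({}^\iota\!\lambda^{\bar\eta})}(\eta(a))$. Hence ${}^\alpha\!\rho_{{}^\iota\!\lambda^1} \simeq \rho_{{}^\iota\!\lambda^1}$ will follow if, for every $\eta$, the two irreducible representations $\rho_{-w_0({}^\iota\!\lambda^{\bar\eta})}$ and $\rho_{{}^\iota\!\lambda^\eta}$ of $\GL_n(\C)$ become isomorphic upon restriction to $\SL_n(\C)$. Two such representations of $\GL_n$ agree on $\SL_n$ iff their highest weights differ by a common integer translate $(c,c,\dots,c) = c \cdot \bfgreek{delta}_n$. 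Componentwise, $-w_0({}^\iota\!\lambda^{\bar\eta}) = (-b_n^{\bar\eta}, -b_{n-1}^{\bar\eta}, \dots, -b_1^{\bar\eta})$, so the required condition reads
\[
b_j^\eta + b_{n-j+1}^{\bar\eta} \ = \ c, \qquad 1 \leq j \leq n,
\]
with $c$ independent of $\eta$ and $j$.

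This is precisely the strong-purity condition from characterization (iii) of Prop.\,\ref{prop:strong-pure-E} applied to $\lambda$ (and then transported through $\iota$), which furnishes $c = \w$, the purity weight. Therefore the highest weights agree modulo $\w \cdot \bfgreek{delta}_n$, the restrictions to $\SL_n(\C)$ coincide for each $\eta$, and the reindexed tensor product is isomorphic to the original $\rho_{{}^\iota\!\lambda^1}$. The only mildly delicate point is bookkeeping: making sure that the composition $\eta \circ \c$ really is $\bar\eta$ under our identifications of $F \subset \C$ and $\c \in \Gal(F/F_0)$, and that the reindexing $\eta \mapsto \bar\eta$ is carried out consistently on both sides; once this is set up, strong-purity does all the work.
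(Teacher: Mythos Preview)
Your argument is correct and follows essentially the same route as the paper: decompose $\M_{{}^\iota\!\lambda}$ over embeddings $\eta : F \to \C$, push $\alpha$ through each factor using $\eta \circ \c = \bar\eta$ and \eqref{eqn:contragredient-equivalences}, reindex by $\eta \mapsto \bar\eta$, and then invoke strong-purity via Prop.\,\ref{prop:strong-pure-E}(iii) to match highest weights modulo $\w\cdot\bfgreek{delta}_n$ on $\SL_n$. The only cosmetic difference is the order in which you apply the contragredient step and the reindexing.
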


\begin{proof}
For $g \in G(\Q)$, note that ${}^\alpha\rho_{{}^\iota\!\lambda}(g) = \rho_{{}^\iota\!\lambda}(\alpha(g))$. Keeping in mind 
$\M_{{}^\iota\!\lambda} = \otimes_{\eta : F \to \C} \M_{{}^\iota\!\lambda^\eta}$ where ${}^\iota\!\lambda^\eta = \lambda^{\iota^{-1}\circ \eta}$
(see \ref{sec:the-sheaf} and \ref{sec:strong-pure-C}), one has
\begin{multline*}
{}^\alpha\rho_{{}^\iota\!\lambda}(g) =  \rho_{{}^\iota\!\lambda}(\alpha(g)) = \bigotimes_{\eta : F \to \C} \rho_{{}^\iota\!\lambda^\eta}(\eta(\alpha(g))) 
= \bigotimes_{\eta : F \to \C} \rho_{{}^\iota\!\lambda^\eta}(\bar\eta(\, {}^t \!g^{-1})) \\ 
= \bigotimes_{\eta : F \to \C} \rho_{{}^\iota\!\lambda^{\bar\eta}}(\eta(\, {}^t \!g^{-1}))  
\simeq \bigotimes_{\eta : F \to \C} \rho_{{}^\iota\!\lambda^{\bar\eta}}^\v(\eta(g)) \simeq \bigotimes_{\eta : F \to \C} \rho_{-w_0(\lambda^{\iota^{-1} \circ \bar\eta})}(\eta(g)).
\end{multline*}
Invoking strong-purity ((iii) of Prop.\,\ref{prop:strong-pure-E}), one has that the restriction to the maximal torus $T_0^1$ of $\SL(n)$ of the weight 
$-w_0(\lambda^{\iota^{-1} \circ \bar\eta})$ is the same as the restriction of $\lambda^{\iota^{-1} \circ \eta}$. Thus, for $g \in G^1(\Q)$, we have 
$
{}^\alpha\rho_{{}^\iota\!\lambda^1}(g) \simeq \bigotimes_{\eta : F \to \C} \rho_{\lambda^{\iota^{-1} \circ \eta}}(\eta(g)) = \rho_{{}^\iota\!\lambda^1}(g).$
\end{proof}

\medskip
\paragraph{\it The action of $\alpha$ on an irreducible unitary representation $\pi_\infty$}
To apply Thm.\,\ref{thm:bls} one needs to show the existence of an irreducible 
admissible unitary representation $\pi_\infty$ of $G^1(\R)$ such that the Lefschetz number $\Lef(\alpha, G^1(\R); \, \pi_\infty \otimes \M^1_{{}^\iota\lambda, \C})$ is nonzero. 
For a complex place $v$, one has a pair $(\eta_v, \bar\eta_v)$ of conjugate embeddings. If $F$ is in the {\bf TR}-case then the distinguished element in this pair 
is  chosen as in \ref{sec:action-of-c}. The finite-dimensional representation $\M^1_{{}^\iota\lambda_v}$ is 
the restriction to $\SL_n(F_v) \cong \SL_n(\C)$ of the 
representation $\M_{{}^\iota\!\lambda^{\eta_v}} \otimes \M_{{}^\iota\!\lambda^{\bar\eta_v}}$ of $\GL_n(F_v) \cong \GL_n(\C)$. 
Consider the pair 
${\blambda}_v  \ = \ ({}^\iota\!\lambda^{\eta_v}, {}^\iota\!\lambda^{\bar\eta_v})$ and the unitary representation 
$\pi_v = \J^1_{\blambda_v}$ 
 of $\SL_n(F_v) = \SL_n(\C)$ as in \ref{par:unitary-rep}.  
From Lem.\,\ref{lem:I_infty} one has ${}^\theta \J^1_{\chi_{{\blambda}_v}} \simeq \J^1_{\chi_{{\blambda}_v}}$ for the Cartan involution $\theta$. 
Prop.\,\ref{prop:slnc-lefschetz} gives 
$\Lef(\theta, \SL_n(\C), \pi_v \otimes \M^1_{\blambda_v}) \neq 0.$ 
Similarly, for a real place $v$ of $F$ corresponding to a real embedding $\eta_v$, if $\lambda_v = {}^\iota\!\lambda^{\eta_v}$ and putting 
$\pi_v = \J^1_{\lambda_v}$, from Prop.\,\ref{prop:slnr-lefschetz} one has $\Lef(\theta, \SL_n(\R), \pi_v \otimes \M^1_{\lambda_v}) \neq 0$
for the Cartan involution $\theta$. 
Consider the representation $\pi_\infty = \otimes_{v \in S_\infty} \pi_v$, with $\pi_v$'s as above. Analogous to Lem.\,\ref{lem:alpha-M-lambda}, the following lemma
is a consequence of strong-purity and the action of $\alpha$ on $G^1(\R)$.

\begin{lemma}
\label{lem:alpha-pi-infty}
Let $\lambda \in X^+_{00}(\Res_{F/\Q}(T_0) \times E)$ be a strongly-pure weight, $\iota : E \to \C$ an embedding. 
Then for the representation $\pi_\infty = \otimes_{v \in S_\infty} \pi_v$, with $\pi_v$ as above, one has ${}^\alpha\pi_\infty \simeq \pi_\infty$ as an 
equivalence of $G^1(\R)$-modules.
\end{lemma}

\begin{proof}
Suppose $F$ is in the {\bf CM}-case with related notations as in \ref{sec:action-of-c}. Write $\pi_\infty = \otimes_{\sigma \in \Sigma_{F_0}} \pi(\sigma)$, with 
$\pi(\sigma) = \otimes_{v | \sigma} \pi_v$; by $v|\sigma$ we mean a place $v$ of $F$ above the place of $F_0$ corresponding to the embedding $\sigma$. 
Strong-purity (especially, Prop.\,\ref{prop:strong-pure-weights-base-change}) implies that the highest weights indexed
by the pairs $\eta_j, \bar\eta_j$ are all pair-wise equal (to the weights indexed by $\nu, \bar\nu$ of $F_1$), and so also all the 
$\pi_v$'s, for $v|\sigma$, are isomorphic to each other. From \ref{sec:action-of-c} it follows that the action of $\alpha$ on $\SL_n(F \otimes_{F_0, \sigma} \R)$ 
is via permuting the factors of $\SL_n(\C)$ followed by the Cartan involution $\theta$ which preserves each of these local representations. 

Suppose $F$ is in the {\bf TR}-case with related notations as in \ref{sec:action-of-c}. For $\sigma \in \Sigma_{F_0}$, suppose  
$v_1,\dots,v_r, w_1,\dots w_s$ are all the places of $F$ above the place of $F_0$ corresponding to $\sigma$; here $v_i$ is the real place corresponding 
to the real embedding $\rho_i$, and $w_j$ is the complex place corresponding to the pair of conjugate embeddings $\{\eta_j, \bar\eta_j\}$ 
with the distinguished element $\eta_j$ fixed as in \ref{sec:action-of-c}. From Prop.\,\ref{prop:strong-pure-weights-base-change} it follows that the highest weights indexed
by any any $\rho_i$ or $\eta_j$ or $\bar\eta_j$ are all equal; hence all the $\pi_{v_i}$ are isomorphic modules of $\SL_n(\R)$, and 
all the $\pi_{w_j}$ are isomorphic modules of $\SL_n(\C)$. The action of $\alpha$ on $\SL_n(F \otimes_{F_0, \sigma} \R)$ 
is via permuting the factors of $\SL_n(\R)$, and separately permuting the factors of $\SL_n(\C)$, followed by the Cartan involution on each factor, which 
preserves each of these local representations. 
\end{proof}

\medskip
\paragraph{\it Conclusion of proof}

The final step in the proof is to show the following
\begin{prop}
\label{prop:lefschetz}
$$
\Lef(\alpha, G^1(\R); \, \pi_\infty \otimes \M^1_{{}^\iota\lambda, \C}) \ \neq \ 0.
$$
\end{prop}

\begin{proof}
To begin, we have 
$$
\Lef(\alpha, G^1(\R); \, \pi_\infty \otimes \M^1_{{}^\iota\lambda, \C}) \ = \ 
\prod_{\sigma \in \Sigma_{F_0}} \Lef(\alpha, \SL_n(F \otimes _{F_0, \sigma} \R); \, \pi(\sigma) \otimes \M(\sigma)),
$$
where $\M(\sigma) := \otimes_{\eta|_{F_0} = \sigma} \M_{{}^\iota\!\lambda^\eta}$, and $\pi(\sigma) = \otimes_{v | \sigma} \pi_v$ as in the proof of 
Lem.\,\ref{lem:alpha-pi-infty}. It suffices to show $\Lef(\alpha, \SL_n(F \otimes _{F_0, \sigma} \R); \, \pi(\sigma) \otimes \M(\sigma)) \neq 0.$  

\medskip
Towards this, let us digress for the moment, and consider the underlying linear-algebraic context: suppose $A$ is a graded vector space, which, up to 
a shift in degree, is the exterior algebra of a finite-dimensional vector space, and is of the form:
$$
A \ = \ \bigoplus_{j = b}^t A_j, \quad A_j = \wedge^{j-b} (\C^{t-b}), 
$$
for integers $0 < b < t$. Let $d_j := \dim(A_j)$. 
Suppose $f : A \to A$ is a graded linear map, $f = \oplus f_j$, with $f_j : A_j \to A_j$; suppose there is a nonzero scalar $c$ independent of $j$ such that:
\begin{equation}
\label{eqn:cond-f-j}
f_j = (-1)^j c 1_{A_j}.
\end{equation}
Fix a positive integer $k$, and let $\cA = A^{\otimes^k} = A \otimes \cdots \otimes A$ be the $k$-fold tensor product of $A$ with itself. It is a graded vector space
$$
\cA = \bigoplus_{m = bk}^{tk} \cA_m
$$
where the degree $m$ summand is described via K\"unneth:
\begin{equation}\label{eqn:kunneth-cA}
\cA_m \ = \ \bigoplus_{\substack{b \leq i_1,\dots, i_k \leq t \\ \sum_j i_j = m}} A_{i_1} \otimes \cdots \otimes A_{i_k}.
\end{equation}
Denote the summand $A_{i_1} \otimes \cdots \otimes A_{i_k}$ as $\cA_m(i_1,\dots,i_k).$
Fix a permutation $\varpi$ of $\{1,2,\dots,k\}$; assume $\varpi^2$ is trivial; then there exist non-negative integers $p$ and $q$ with $k = 2p + q$ such that $\varpi$ is a 
product of $p$ transpositions; up to relabelling, say, 
$\varpi \ = \ (1,2)(3,4)\cdots (2p-1,2p).$
The graded map $f : A \to A$ and $\varpi$ determine a graded linear map $\cF : \cA \to \cA$ defined as 
$$
\cF \:= \ (f \otimes \cdots \otimes f) \circ \varpi.
$$
Suppose $\cF = \oplus \cF_m$, with $\cF_m : \cA_m \to \cA_m$, then $\cF_m$ maps the summand $\cA_m(i_1,\dots,i_k)$ to $\cA_m(i_{\varpi(1)},\dots, i_{\varpi(k)}),$
and on a pure tensor in this summand one has:
$$
\cF_m(a_{i_1}\otimes \cdots \otimes a_{i_k}) = f(a_{i_{\varpi(1)}}) \otimes \cdots \otimes f(a_{i_{\varpi(k)}}). 
$$
\begin{prop}
\label{prop:abstract-lefschetz}
Under the conditions as above on $\cF$ its Lefschetz number  is nonzero:
$$
\Lef(\cF) \ := \ \sum_m (-1)^m{\rm Trace}(\cF_m) \neq 0.
$$
\end{prop}
\begin{proof}
We begin with the following easy lemma: 
\begin{lemma}
\label{lem:lin-alg}
Suppose $V$ and $W$ are finite-dimensional vector spaces over $\C$, 
and $T \in {\rm End}(V)$ and $S \in {\rm End}(W)$. Consider $T \otimes S \in {\rm End}(V \otimes W)$, and $T^{(2)} \in {\rm End}(V \otimes V)$ which is defined as 
$T \otimes T$ followed by interchanging the factors: $T^{(2)}(v \otimes v') = T(v') \otimes T(v)$. Then:
\begin{enumerate}
\item[(i)] ${\rm Trace}(T \otimes S) = {\rm Trace}(T) \cdot {\rm Trace}(S).$ 
\item[(ii)] ${\rm Trace}(T^{(2)}) = {\rm Trace}(T^2).$
\end{enumerate}
\end{lemma}

\begin{proof}
Statement (i) is well-known. For (ii), suppose $\{v_1,\dots,v_n\}$ is an ordered basis of $V$, and suppose $T(v_i) = \sum_j a_{ji} v_j$. Let $A = [a_{ij}]$ be 
the matrix of $T$ relative to this basis. Then 
$T^{(2)}(v_i \otimes v_r) = T(v_r) \otimes T(v_i) =  (\sum_s a_{sr} v_s) \otimes (\sum_j a_{ji} v_j) = \sum_s \sum_j a_{sr} a_{ji} (v_s \otimes  v_j).$ The diagonal 
contribution for computing trace is obtained by putting $s = i$ and $j = r$ giving $a_{ir} a_{ri}$. Hence, ${\rm Trace}(T^{(2)}) = \sum_i\sum_r a_{ir} a_{ri} = 
\sum_i (A^2)_{ii} = {\rm Trace}(A^2) = {\rm Trace}(T^2).$
\end{proof}

To compute the trace of $\cF_m$, since it maps $\cA_m(i_1,\dots,i_k)$ to $\cA_m(i_{\varpi(1)},\dots, i_{\varpi(k)}),$ the only summands that are relevant are 
those that satisfy: $i_j = i_{\varpi(j)}$ for all $1 \leq j \leq k$; these are the diagonal blocks for $\cF_m$. 
Hence the relevant indices $(i_1,\dots,i_k)$ satisfies these conditions: 
\begin{equation}
\label{eqn:relevant-index}
b \leq i_1,\dots,i_k \leq t, \quad \sum_j i_j = m, \quad i_1 = i_2, \ i_3 = i_4, \ \dots, \ i_{2p-1} = i_{2p}. 
\end{equation}
Let $\cI_m$ denote the set of all such relevant indices. For brevity, let $\cF_m(i_1,\dots,i_k)$ denote the $\cF_m$ restricted to $\cA_m(i_1,\dots,i_k)$.
For a relevant index we have $\cF_m$ maps $\cA_m(i_1,\dots,i_k)$ into itself; if we group the factors of $\cA_m(i_1,\dots,i_k)$ as
$$
\cA_m(i_1,\dots,i_k) \ = \ (A_{i_1} \otimes A_{i_1}) \otimes (A_{i_3} \otimes A_{i_3}) \otimes \cdots \otimes (A_{i_{2p-1}} \otimes A_{i_{2p}}) \otimes 
A_{i_{2p+1}} \otimes \cdots \otimes A_{k}, 
$$
then, on this $\cF_m$ is given by
$$
\cF_m(i_1,\dots,i_k) \ = \ f_{i_1}^{(2)} \otimes f_{i_3}^{(2)} \otimes \cdots \otimes f_{i_{2p-1}}^{(2)} \otimes f_{i_{2p+1}} \otimes \cdots \otimes f_{i_k}.
$$
Hence the trace of $\cF_m(i_1,\dots,i_k)$ is computed by a repeated application of Lem.\,\ref{lem:lin-alg}: 
$$
{\rm Trace}(\cF_m(i_1,\dots,i_k)) \ = \ {\rm Trace}(f_{i_1}^2) \cdot  {\rm Trace}(f_{i_3}^2) \cdots {\rm Trace}(f_{i_{2p-1}}^2) \cdot 
{\rm Trace}(f_{i_{2p+1}}) \cdots {\rm Trace}(f_{i_k}).
$$
From the conditions imposed on $f_j$, this implies:
$$
{\rm Trace}(\cF_m(i_1,\dots,i_k)) \ = \ c^2d_{i_1} \cdot  c^2d_{i_3} \cdots c^2d_{i_{2p-1}} \cdot 
\prod_{j=2p+1}^k (-1)^{i_j}\cdot c \cdot d_{i_j}
\ = \ (-1)^m c^k \cdot d_{I}
$$
where $d_I := d_{i_1} \cdot  d_{i_3} \cdots d_{i_{2p-1}} \cdot \prod_{j=2p+1}^k d_{i_j}$ is the product of the dimensions that appear; the point to note is 
that $d_I$ is a positive integer; we have also used $(c^2)^p \cdot c^q = c^{2p+q} = c^k$, and the parity condition $m \equiv i_{2p+1} + i_{2p+2} + \dots + i_k \pmod{2}$ 
which follows from the conditions on a relevant index in \eqref{eqn:relevant-index}. Hence, 
${\rm Trace}(\cF_m) \ = \ (-1)^m c^k (\sum_{I \in \cI_m} d_I),$ whence, 
$$
\Lef(\cF) = \sum_m (-1)^m {\rm Trace}(\cF_m) = c^k \left(\sum_m \sum_{I \in \cI_m} d_I \right) \neq 0.
$$
This concludes the proof of Prop.\,\ref{prop:abstract-lefschetz}.
\end{proof}

This also concludes the linear-algebraic digression. 
Getting back to the proof of Prop.\,\ref{prop:lefschetz}, recall that one 
needs to show $\Lef(\alpha, \SL_n(F \otimes _{F_0, \sigma} \R); \, \pi(\sigma) \otimes \M(\sigma)) \neq 0.$

\medskip

Suppose $F$ is in the {\bf CM}-case with related notations as in \ref{sec:action-of-c}. The linear-algebraic context may be adumbrated as follows: 
\begin{itemize}
\item $A$ is the $\SL_n(\C)$ cohomology of the unitary representation constructed in \ref{sec:slnc}; 
\item $b = n(n-1)/2$ the bottom-degree, and $t = b + n-1$ the top-degree; see Lem.\,\ref{lem:coh-j-lambda}. 
\item $f : A \to A$ is the map induced in cohomology by the Cartan involution $\theta$ on $\SL_n(\C)$; for the condition \eqref{eqn:cond-f-j} on $f_j$, see the proof 
of Prop.\,\ref{prop:slnc-lefschetz}. 
\item $k = [F:F_1]$ is the number of imaginary places of $F$ sitting over a fixed $\sigma : F_0 \to \R$; 
\item $\cA$ is the $\SL_n(\C) \times \cdots \times \SL_n(\C)$ cohomology $A \otimes \cdots \otimes A$ computed via K\"unneth; 
\item $\varpi$ is the permutation of the $k$ imaginary places of $F$ induced by $\c \in \Gal(F/F_0)$.
\end{itemize}
Then $\cF$ is exactly the map induced in the $\SL_n(F \otimes _{F_0, \sigma} \R)$-cohomology of $\pi(\sigma) \otimes \M(\sigma)$ by algebraic automorphism $\alpha$. 
It follows from Prop.\,\ref{prop:abstract-lefschetz} that 
$$\Lef(\alpha, \SL_n(F \otimes _{F_0, \sigma} \R); \, \pi(\sigma) \otimes \M(\sigma)) = \Lef(\cF) \neq 0.$$

\medskip

Suppose $F$ is in the {\bf TR}-case with related notations as in \ref{sec:action-of-c}. Fix $\sigma : F_0 \to \R$. Suppose $v_1,\dots,v_r$ (resp., $w_1,\dots,w_s$)
are the real (resp., imaginary) places of $F$ above $\sigma.$ Let 
$\cS_r(\sigma) = \prod_{i=1}^r \SL_n(F_{v_j}) = \prod_{i=1}^r \SL_n(\R),$ and  $\cS_c(\sigma) = \prod_{j=1}^s \SL_n(F_{w_j}) \simeq 
\SL_n(\C) \times \cdots \SL_n(\C)$, with the distinguished embeddings chosen as in \ref{sec:action-of-c}. Then 
$\SL_n(F \otimes _{F_0, \sigma} \R) = \cS_r(\sigma) \times \cS_c(\sigma).$  
Correspondingly, let $\pi_r(\sigma) = \otimes _{i=1}^r \pi_{v_i}$ and $\pi_c(\sigma) = \otimes _{j=1}^r \pi_{w_j}$; similarly, for $\M_r$ and $\M_c$. 
Recall that the real and complex copies are permuted separately by $\c$; the action of $\alpha$ on $\cS_r(\sigma)$ and $\cS_c(\sigma)$ are as in \ref{sec:action-of-c}. We have
\begin{multline*}
\Lef(\alpha, \SL_n(F \otimes _{F_0, \sigma} \R); \, \pi(\sigma) \otimes \M(\sigma)) = \\
\Lef(\alpha, \cS_r(\sigma); \pi_r(\sigma) \otimes \M_r(\sigma)) \cdot \Lef(\alpha, \cS_c(\sigma); \pi_c(\sigma) \otimes \M_c(\sigma)).
\end{multline*}
To show $\Lef(\alpha, \cS_c(\sigma); \pi_c(\sigma) \otimes \M_c(\sigma)) \neq 0$ we can set up the linear algebraic context exactly as in the {\bf CM}-case. 
To show 
$\Lef(\alpha, \cS_r(\sigma); \pi_r(\sigma) \otimes \M_r(\sigma)) \neq 0$, similarly, take:   
\begin{itemize}
\item $A$ to be the $\SL_n(\R)$ cohomology of the unitary representation constructed in \ref{sec:slnr}; 
\item $b = \lfloor n^2/4 \rfloor$ and $t = b + \lfloor (n-1)/2 \rfloor$; see Lem.\,\ref{lem:dimension-slnr}.
\item $f : A \to A$ is the map induced in cohomology by the Cartan involution $\theta$ on $\SL_n(\R)$; 
\item $\cA = A^{\otimes^r} = A \otimes \cdots \otimes A$ computed via K\"unneth; 
\item $\varpi$ is the permutation of the $r$ real places of $F$ above $\sigma$ induced by $\c \in \Gal(F/F_0)$.
\end{itemize}
It follows from Prop.\,\ref{prop:abstract-lefschetz} that $\Lef(\alpha, \cS_r(\sigma); \pi_r(\sigma) \otimes \M_r(\sigma)) \neq 0$. 

\end{proof}

A simple example might help the reader when $F$ is a non-CM quadratic extension of an imaginary quadratic field $F_1$; in this case $F_0 = \Q$. 
Think of $A$ as the cohomology of $\SL_2(\C)$ and $\cA$ 
the cohomology of $\SL_2(\C) \times \SL_2(\C)$ given by $A \otimes A$; the element $\c$ interchanges and conjugates the two factors. Then, 
$A = A_1 \oplus A_2$; $d_1 = d_2 = 1;$ $k = 2$.  By K\"unneth, $\cA = \cA_2 \oplus \cA_3 \oplus \cA_4$, where $\cA_2 = A_1 \otimes A_1,$ 
$\cA_3 = A_1 \otimes A_2 \oplus A_2 \otimes A_1,$ and
$\cA_4 = A_2 \otimes A_2.$ 
The permutation $\varpi$ is the transposition $(1,2).$ The set of relevant indices are $\cI_2 = (1,1)$, $\cI_3 = \emptyset$ and $\cI_4 = (2,2)$. 
We get ${\rm Trace}(\cF_2) = c^2d_1$,  
${\rm Trace}(\cF_3) = 0,$ and ${\rm Trace}(\cF_4) = c^2d_2$; where $c$ is a nonzero scalar. 
Hence, $\Lef(\cF) = c^2(d_1 + d_2) \neq 0.$

\bigskip

{\small
{\it Acknowledgements:} We are grateful to the Institute for Advanced Study, Princeton, for a summer collaborator's grant in 2023 when much of this work was completed. 
We thank other members
of this summer collaboration: Baskar Balasubramanyam, Chandrasheel Bhagwat, P.\,Narayanan, and Freydoon Shahidi, who acted as a sounding board at various times. 
We thank David Vogan for a conversation which gave us clarity on 
tempered cohomological representations. Finally, we are deeply grateful to Frank Calegari for his interest and insightful questions on this work; 
the final steps in 6.2.3 owes a lot to his questions.

\bigskip


\end{document}